\providecommand\@dotsep{5}
\def\listtodoname{List of Todos}
\def\listoftodos{\@starttoc{tdo}\listtodoname}
\numberwithin{equation}{section}
\newcommand{\R}{\mathbb{R}}
\newcommand{\N}{\mathcal{N}}
\newcommand{\M}{\mathcal{M}}
\newcommand{\C}{\mathcal{C}}
\newcommand{\B}{\mathcal{B}}
\newcommand{\I}{\mathcal{I}}
\newcommand{\J}{\mathcal{J}}
\newcommand{\p}{p^{*}}
\newcommand{\q}{q^{*}}
\newcommand{\X}{\mathbb{X}}
\newcommand{\Y}{\mathbb{Y}}
\DeclareMathOperator{\dive}{div}
\DeclareMathOperator{\supp}{supp}
\DeclareMathOperator{\e}{\varepsilon}
\DeclareMathOperator{\ri}{\rightarrow}
\newtheorem{proposition}{Proposition}[section]
\newtheorem{lemma}{Lemma}[section]
\newtheorem{theorem}{Theorem}[section]
\newtheorem{remark}{Remark}[section]
\title[Multiplicity and concentration results  for a $(p, q)$-Laplacian problem]{Multiplicity and concentration results \\
for a $(p, q)$-Laplacian problem in $\R^{N}$}
\author[V. Ambrosio]{Vincenzo Ambrosio}
\address{Vincenzo Ambrosio\hfill\break\indent 
Dipartimento di Ingegneria Industriale e Scienze Matematiche \hfill\break\indent
Universit\`a Politecnica delle Marche\hfill\break\indent
Via Brecce Bianche, 12\hfill\break\indent
60131 Ancona (Italy)}
\email{v.ambrosio@univpm.it}
\author[D. Repov\v{s}]{Du\v{s}an Repov\v{s}}
\address{Du\v{s}an Repov\v{s}\hfill\break\indent
Faculty of Education, and Faculty of Mathematics and Physics \hfill\break\indent
University of Ljubljana\hfill\break\indent
\& Institute of Mathematics, Physics and Mechanics \hfill\break\indent
SI-1000 Ljubljana, Slovenia}
\email{dusan.repovs@guest.arnes.si}
\keywords{$(p, q)$-Laplacian problem, positive solutions, variational methods, Lyusternik-Shnirel'man theory.}
\subjclass[2010]{35A15, 35B09, 35J62, 58E05}  
\date{}
\begin{document}
\maketitle
\begin{abstract}
In this paper we study the multiplicity and concentration of positive solutions for the following $(p, q)$-Laplacian problem:
\begin{equation*}
\left\{
\begin{array}{ll}
-\Delta_{p} u -\Delta_{q} u +V(\e x) \left(|u|^{p-2}u + |u|^{q-2}u\right) = f(u) &\mbox{ in } \R^{N}, \\
u\in W^{1, p}(\R^{N})\cap W^{1, q}(\R^{N}), \quad u>0 \mbox{ in } \R^{N},
\end{array}
\right.
\end{equation*}
where $\e>0$ is a small parameter, $1< p<q<N$, $\Delta_{r}u=\dive(|\nabla u|^{r-2}\nabla u)$, with $r\in \{p, q\}$, is the $r$-Laplacian operator, $V:\R^{N}\ri \R$ is a continuous function satisfying the global Rabinowitz condition, and $f:\R\ri \R$ is a continuous function with subcritical growth. Using suitable variational arguments and Lyusternik-Shnirel'man category theory, we investigate the relation between the number of positive solutions and the topology of the set where $V$ attains its minimum for small $\e$. 
\end{abstract}

\section{Introduction}
\noindent
In this paper we deal with the existence and multiplicity of solutions for the following $(p,q)$-Laplacian problem:
\begin{equation}\label{P}\tag{$P_{\e}$}
\left\{
\begin{array}{ll}
-\Delta_{p} u -\Delta_{q} u +V(\e x) \left(|u|^{p-2}u + |u|^{q-2}u\right) = f(u) &\mbox{ in } \R^{N}, \\
u\in W^{1, p}(\R^{N})\cap W^{1, q}(\R^{N}), \quad 
u>0 &\mbox{ in } \R^{N},
\end{array}
\right.
\end{equation}
where $\e>0$ is a small parameter, $1< p<q<N$, $\Delta_{r}u=\dive(|\nabla u|^{r-2}\nabla u)$, with $r\in\{p, q\}$, is the $r$-Laplacian operator, $V:\R^{N}\ri \R$ is a continuous potential and $f:\R\ri \R$ is a continuous function with subcritical growth.

We recall that this class of problems arises from a general reaction-diffusion system 
\begin{align*}
u_{t}= \dive(D(u) \nabla u) + f(x, u) \quad x\in \R^{N}, t>0, 
\end{align*}
where $D(u)= |\nabla u|^{p-2}+ |\nabla u|^{q-2}$. As pointed out in \cite{Cherfils}, this equation appears in several applications such as biophysics, plasma physics and chemical reaction design. In these applications, $u$ describes a concentration, $\dive(D(u) \nabla u)$ corresponds to the diffusion with a diffusion coefficient $D(u)$, and the reaction term $f(x, u)$ relates to source and loss processes. 
Classical $(p, q)$-Laplacian problems in bounded or unbounded domains have been studied by several authors; see for instance \cite{AFans, FJMAA, FMN, HL, HLNA, LG, LL, PRR1} and references therein. 

In order to precisely state our result, we introduce the assumptions on the potential $V$ and the nonlinearity $f$. 
Along the paper we assume that
$V:\R^{N}\rightarrow \R$ is a continuous function satisfying the following condition introduced by Rabinowitz \cite{Rab}:
\begin{equation}\tag{$V$}\label{V0}
0<\inf_{x\in \R^{N}} V(x)=V_{0}< \liminf_{|x|\rightarrow \infty} V(x)=V_{\infty}\in (0, \infty],
\end{equation}
and the nonlinearity $f: \R\rightarrow \R$ fulfills the following hypotheses:
\begin{compactenum}
\item [$(f_{1})$] $f\in C^{0}(\R, \R)$ and $f(t)=0$ for all $t<0$;
\item [$(f_{2})$] $\displaystyle{\lim_{|t|\rightarrow 0} \frac{|f(t)|}{|t|^{p-1}}=0}$;
\item [$(f_{3})$] there exists $r\in (q, \q)$, with $\q=\frac{Nq}{N-q}$, such that $\displaystyle{\lim_{|t|\rightarrow \infty} \frac{|f(t)|}{|t|^{r-1}}=0}$;
\item [$(f_{4})$] there exists $\vartheta \in (q, \q)$ such that
\begin{equation*}
0<\vartheta F(t)= \vartheta \int_{0}^{t} f(\tau)\, d\tau \leq tf(t) \quad \mbox{ for all } t>0;
\end{equation*}
\item [$(f_{5})$] the map $\displaystyle{t\mapsto \frac{f(t)}{t^{q-1}}}$ is increasing on $(0, \infty)$.
\end{compactenum}

Since we deal with the multiplicity of solutions of \eqref{P}, we recall that if $Y$ is a given closed subset of a topological space $X$, we denote by $cat_{X}(Y)$ the Lyusternik-Shnirel'man category of $Y$ in $X$, that is the least number of closed and contractible sets in $X$ which cover $Y$ (see \cite{W} for more details).

Let us denote by
\begin{equation*}
M=\{x\in \R^{N} : V(x)=V_{0}\} \quad \mbox{ and } \quad M_{\delta}= \{x\in \R^{N} : dist(x, M)\leq \delta\}, \mbox{ for } \delta>0.
\end{equation*}
Our main result can be stated as follows:
\begin{theorem}\label{thmAI}
Assume that conditions $(V)$ and $(f_{1})$-$(f_{5})$ hold. Then for any $\delta>0$ there exists $\e_{\delta}>0$ such that, for any $\e \in (0, \e_{\delta})$, problem \eqref{P} has at least $cat_{M_{\delta}}(M)$ positive solutions. Moreover, if $u_{\e}$ denotes one of these solutions and $x_{\e}\in \R^{N}$ is a global maximum point of $u_{\e}$, then
$$
\lim_{\e\ri 0} V(\e x_{\e})=V_{0},
$$ 
and there exist $C_{1}, C_{2}>0$ such that
$$
u_{\e}(x)\leq C_{1}e^{-C_{2}|x-x_{\e}|} \quad \mbox{ for all } x\in \R^{N}.
$$
\end{theorem}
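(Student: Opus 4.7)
The plan is to combine the Nehari manifold method with a barycenter argument in the spirit of Benci--Cerami. I would work with the rescaled energy functional
\begin{equation*}
\J_{\e}(u)=\frac{1}{p}\!\int_{\R^{N}}\!\left(|\nabla u|^{p}+V(\e x)|u|^{p}\right)dx+\frac{1}{q}\!\int_{\R^{N}}\!\left(|\nabla u|^{q}+V(\e x)|u|^{q}\right)dx-\!\int_{\R^{N}}F(u)\,dx
\end{equation*}
on $W^{1,p}(\R^{N})\cap W^{1,q}(\R^{N})$. Hypotheses $(f_{1})$--$(f_{5})$ give the mountain pass geometry, boundedness of Palais--Smale sequences via $(f_{4})$, and allow $(f_{5})$ to turn the Nehari manifold $\N_{\e}$ into a $C^{1}$ natural constraint on which the mountain pass level $c_{\e}$ equals $\inf_{\N_{\e}}\J_{\e}$. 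In parallel I would analyse the autonomous ``limit'' problem obtained by replacing $V(\e x)$ with a constant $\mu>0$, producing a positive radial ground state $w_{\mu}$ at level $c_{\mu}$ with $\mu\mapsto c_{\mu}$ continuous and strictly increasing; Rabinowitz's condition $(V)$ then yields $c_{V_{0}}<c_{V_{\infty}}$.

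The second step is a concentration-compactness analysis showing that every $(PS)_{c}$ sequence of $\J_{\e}$ with $c<c_{V_{\infty}}$ is relatively compact: a Brezis--Lieb splitting applied separately to the $p$- and $q$-components shows that any nontrivial bubble escaping to infinity would carry energy at least $c_{V_{\infty}}$, contradicting the level. Testing with a cut-off translate of $w_{V_{0}}$ gives $\limsup_{\e\to 0}c_{\e}\leq c_{V_{0}}$, while the compactness, combined with weak lower semicontinuity, provides the reverse inequality, hence $c_{\e}\to c_{V_{0}}$.

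For the multiplicity I would implement the Benci--Cerami scheme. Fix a radial ground state $w=w_{V_{0}}$ and, for $y\in M$, let $\Phi_{\e}(y)$ be the $\N_{\e}$-projection (given uniquely by $(f_{5})$) of a cut-off translate $x\mapsto \psi(|\e x-y|)\,w(\tfrac{\e x-y}{\e})$. Using $c_{\e}\to c_{V_{0}}$ one gets $\sup_{y\in M}\J_{\e}(\Phi_{\e}(y))= c_{V_{0}}+o_{\e}(1)$. On the other side, choose $\rho>0$ with $M_{\delta}\subset B_{\rho}$ and a continuous $\chi:\R^{N}\to\R^{N}$ with $\chi(y)=y$ on $B_{\rho}$ and $|\chi(y)|\leq \rho$ elsewhere, and define
\begin{equation*}
\beta_{\e}(u)=\frac{\int_{\R^{N}}\chi(\e x)\,|u(x)|^{q}\,dx}{\int_{\R^{N}}|u(x)|^{q}\,dx}.
\end{equation*}
The key lemma is a concentration statement: if $\e_{n}\to 0$ and $u_{n}\in \N_{\e_{n}}$ satisfy $\J_{\e_{n}}(u_{n})\to c_{V_{0}}$, then along a subsequence there exist $\tilde y_{n}\in\R^{N}$ with $\e_{n}\tilde y_{n}\to y_{0}\in M$ and $u_{n}(\cdot+\tilde y_{n})\to w_{V_{0}}$ in $W^{1,p}\cap W^{1,q}$. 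This follows from the splitting argument of step two together with the strict inequality $c_{V_{0}}<c_{V(y)}$ for $y\notin M$. It implies that, for $\e$ small, $\beta_{\e}\circ\Phi_{\e}$ is homotopic to the inclusion $M\hookrightarrow M_{\delta}$, and the Lyusternik--Shnirel'man category theory applied on the sublevel $\{\J_{\e}\leq c_{V_{0}}+h(\e)\}\cap\N_{\e}$ (which enjoys $(PS)$ by step two) delivers at least $\mathrm{cat}_{M_{\delta}}(M)$ critical points.

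For the qualitative part, given any such solution $u_{\e}$ with maximum point $x_{\e}$, I would apply Moser iteration adapted to the $(p,q)$-structure to get a uniform $L^{\infty}$ bound, and then DiBenedetto/Lieberman $C^{1,\alpha}_{\mathrm{loc}}$ regularity. The concentration lemma applied to $u_{\e}(\cdot+x_{\e})$ forces $V(\e x_{\e})\to V_{0}$. The exponential decay then follows by constructing an exponential super-solution $C_{1}e^{-C_{2}|x-x_{\e}|}$ outside a large ball around $x_{\e}$, exploiting $(f_{2})$--$(f_{3})$ to absorb the subcritical term into the linear $|u|^{q-2}u$ part and a standard comparison principle for the $(p,q)$-operator. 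The main obstacle will be the concentration-compactness step: the non-homogeneity of the $(p,q)$-energy and the interaction of the two exponents make the splitting lemma and the uniform tightness estimates considerably more delicate than in the single $p$-Laplacian case, whereas the barycenter argument, LS category, Moser iteration and barrier construction are adaptations of well-established techniques.
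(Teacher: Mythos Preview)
Your overall architecture---Nehari manifold, autonomous limit problem, concentration-compactness below the level $d_{V_{\infty}}$, the Benci--Cerami barycenter scheme, Moser iteration and an exponential barrier---matches the paper's approach closely. The minor variations (using only $|u|^{q}$ rather than $|u|^{p}+|u|^{q}$ in the barycenter, and speaking of a ``radial'' ground state, which the paper does not claim) are inessential.

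There is, however, a genuine gap at the point where you say that $(f_{5})$ ``turns the Nehari manifold $\N_{\e}$ into a $C^{1}$ natural constraint.'' Under the standing hypothesis $(f_{1})$, $f$ is merely continuous, so the map $u\mapsto\langle \J_{\e}'(u),u\rangle$ is only $C^{0}$ and $\N_{\e}$ need not be a $C^{1}$ submanifold; you therefore cannot apply Lyusternik--Shnirel'man theory directly on a sublevel of $\N_{\e}$. This is precisely the obstacle the paper singles out as its main technical point (it is what distinguishes the result from the earlier work of Alves--Figueiredo, where $f\in C^{1}$ was assumed). The paper circumvents it via the Szulkin--Weth device: one shows that the fibering map gives a homeomorphism $m_{\e}$ between the unit sphere $\mathbb{S}_{\e}\subset\X_{\e}$ and $\N_{\e}$, transfers the functional to $\Psi_{\e}=\I_{\e}\circ m_{\e}$ on $\mathbb{S}_{\e}$, verifies that $\Psi_{\e}\in C^{1}(\mathbb{S}_{\e})$ and that Palais--Smale sequences correspond under $m_{\e}$, and then runs the category argument on the $C^{1}$ manifold $\mathbb{S}_{\e}$ with the diagram $M\xrightarrow{\Phi_{\e}}\widetilde{\N}_{\e}\xrightarrow{m_{\e}^{-1}}\widetilde{\mathbb{S}}_{\e}\xrightarrow{m_{\e}}\widetilde{\N}_{\e}\xrightarrow{\beta_{\e}}M_{\delta}$. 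Without this transfer (or an alternative device to handle the non-$C^{1}$ constraint), the LS step in your outline does not go through; you should insert the Szulkin--Weth machinery at that point.
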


The proof of Theorem \ref{thmAI} will be obtained by using suitable variational techniques and category theory. 
We note that Theorem \ref{thmAI} improves Theorem $1.1$ in \cite{AFans}, in which the authors assumed $f\in C^{1}$ and that there exist $C>0$ and $\nu \in (p, \q)$ such that
\begin{equation*}
f'(t) t^{2} - (q-1) f(t)t \geq Ct^{\nu} \quad \mbox{ for all } t\geq 0. 
\end{equation*}
Since we require that $f$ is only continuous, the classical Nehari manifold arguments used in \cite{AFans} do not work in our context, and in order  to overcome the non-differentiability of the Nehari manifold, we take advantage of some  variants of critical point theorems from \cite{SW}. 
Clearly, with respect to \cite{AFans}, a more accurate and delicate analysis will be needed to implement our variational machinery. To obtain multiple solutions, we use a technique introduced by Benci and Cerami in \cite{BC}, which consists of making precise comparisons between the category of some sublevel sets of the energy functional $\mathcal{I}_{\e}$ associated with \eqref{P} and the category of the set $M$. Since we aim to apply Lyusternik-Shnirel'man theory, we need to prove certain compactness property for the functional $\mathcal{I}_{\e}$. In particular, we will see that the levels of compactness are strongly related to the behavior of the potential $V$ at infinity. This kind of argument has been recently employed by the first author for nonlocal fractional problems; see for example \cite{Aampa, AI}.
Finally, we prove the exponential decay of solutions by following some ideas from \cite{HL}. 
We would like to point out that our arguments are rather flexible and we believe that the ideas contained here can be applied in other situations to study problems driven by $(p, q)$-Laplacian operators, $\phi$-Laplacian operator, or also fractional $(p, q)$-Laplacian problems, on the entire space.
\smallskip

\noindent
The paper is organized as follows: in Section \ref{Sect2} we collect some facts about the involved Sobolev spaces  and some useful lemmas. In Section \ref{Sect3} we provide some technical results which will be crucial to prove our main theorem. In Section \ref{Sect4} we deal with the autonomous problems associated to \eqref{P}. In Section \ref{Sect5} we obtain an existence result for \eqref{P} for sufficiently small $\e$. Section \ref{Sect6} is devoted to the multiplicity result for \eqref{P}, and Section \ref{Sect7} to the concentration phenomenon.

\section{Preliminaries}\label{Sect2}

\noindent
In this section we recall some facts about the Sobolev spaces and we prove some technical lemmas which we will use later.

Let $p\in [1, \infty]$ and $A\subset \R^{N}$. We denote by $|u|_{L^{p}(A)}$ the $L^{p}(A)$-norm of a function $u:\R^{N}\rightarrow \R$ belonging to $L^{p}(A)$. When $A=\R^{N}$, we simply write $|u|_{p}$ instead of $|u|_{L^{p}(\R^{N})}$.
For $p\in (1, \infty)$ and $N>p$, we define $\mathcal{D}^{1, p}(\R^{N})$ as the closure of $C^{\infty}_{c}(\R^{N})$ with respect to
$$
|\nabla u|_{p}^{p}= \int_{\R^{N}} |\nabla u|^{p} dx.
$$
Let us denote by $W^{1, p}(\R^{N})$ the set of functions $u\in L^{p}(\R^{N})$ such that $|\nabla u|_{p}<\infty$, endowed with the natural norm
\begin{equation*}
\|u\|_{1, p}^{p}= |\nabla u|^{p}_{p}+ |u|_{p}^{p}.
\end{equation*}

We begin by recalling the following embedding theorem for Sobolev spaces.
\begin{theorem}(see \cite{Adams})\label{Sembedding}
Let $N>p$. Then there exists a constant $S_{*}>0$
such that, for any $u\in \mathcal{D}^{1, p}(\R^{N})$,
\begin{equation*}
|u|^{p}_{\p} \leq S_{*}^{-1} |\nabla u|^{p}_{p}.
\end{equation*}
Moreover, $W^{1, p}(\R^{N})$ is continuously embedded in $L^{t}(\R^{N})$ for any $t\in [p, p^{*}_{s}]$ and compactly in $L^{t}_{loc}(\R^{N})$ for any $t\in [1, p^{*})$.
\end{theorem}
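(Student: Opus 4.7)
The statement splits into three assertions: the sharp Gagliardo--Nirenberg--Sobolev inequality on $\mathcal{D}^{1,p}(\R^N)$, the continuous embedding of $W^{1,p}(\R^N)$ into $L^{t}(\R^N)$ for $t\in[p,\p]$ (here $\p=Np/(N-p)$), and the local compactness of this embedding for $t\in[1,\p)$. Since $\mathcal{D}^{1,p}(\R^N)$ is by definition the completion of $C^{\infty}_{c}(\R^N)$ under $|\nabla \cdot|_{p}$, the plan is to prove each inequality first on $C^{\infty}_{c}(\R^N)$ and then extend by density.

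For the Sobolev inequality, I would begin with the case $p=1$: for $u\in C^{\infty}_{c}(\R^N)$, write $|u(x)|\leq\int_{-\infty}^{x_i}|\partial_{i}u|\,dt$ in each coordinate and apply the generalized H\"older (or Loomis--Whitney) inequality after taking the product of the $N$ resulting bounds to the power $1/(N-1)$, integrating coordinate by coordinate using Fubini. This yields $|u|_{N/(N-1)}\leq C_{N}\prod_{i=1}^{N}|\partial_{i}u|_{1}^{1/N}\leq C_{N}|\nabla u|_{1}$. To pass to general $p\in(1,N)$, apply this inequality to $v=|u|^{\gamma}$ for the exponent $\gamma=p(N-1)/(N-p)$ chosen so that $\gamma N/(N-1)=\p$; a direct computation gives $|\nabla v|_{1}\leq\gamma |u|^{\gamma-1}_{p'(\gamma-1)}|\nabla u|_{p}$ via H\"older with conjugate pair $(p,p')$, and since $p'(\gamma-1)=\p$, dividing through by $|u|^{\gamma-1}_{\p}$ produces $|u|_{\p}\leq S_{*}^{-1/p}|\nabla u|_{p}$. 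Density of $C^{\infty}_{c}(\R^N)$ in $\mathcal{D}^{1,p}(\R^N)$ extends the bound to the whole space.

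For the continuous embedding on $W^{1,p}(\R^N)$, I interpolate: given $t\in[p,\p]$ write $1/t=\theta/p+(1-\theta)/\p$ with $\theta\in[0,1]$, so that H\"older yields $|u|_{t}\leq |u|_{p}^{\theta}|u|_{\p}^{1-\theta}$; combined with the Sobolev inequality this gives $|u|_{t}\leq C\|u\|_{1,p}$. For the local compactness, I would invoke the standard Rellich--Kondrachov theorem on a bounded smooth domain $\Omega\subset\R^N$ (for instance a ball), which supplies compactness of the embedding $W^{1,p}(\Omega)\hookrightarrow L^{p}(\Omega)$. Given $t\in[1,\p)$, interpolating with $L^{\p}(\Omega)$ via H\"older and using the continuous embedding $W^{1,p}(\Omega)\hookrightarrow L^{\p}(\Omega)$ shows that any $W^{1,p}$-bounded sequence admits a subsequence converging in $L^{t}(\Omega)$; since $\Omega$ is arbitrary, this gives compactness in $L^{t}_{\mathrm{loc}}(\R^N)$.

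The substantive step is the first one: establishing the sharp Sobolev inequality with the explicit exponent $\p$. Everything else is either interpolation or a citation of a standard local compactness result. Because the paper is citing \cite{Adams} for this theorem, I would keep the proposed proof at the level of a sketch, emphasizing the $L^{1}$-bootstrap plus H\"older substitution as the core mechanism, and reduce the continuous and compact embeddings to the sharp inequality together with interpolation and Rellich--Kondrachov on bounded domains.
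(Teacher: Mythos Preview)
Your sketch is correct and follows the classical Gagliardo--Nirenberg route (the $p=1$ case via iterated one-dimensional estimates and Loomis--Whitney, then the substitution $v=|u|^{\gamma}$ with $\gamma=p(N-1)/(N-p)$ to reach general $p$), together with interpolation and Rellich--Kondrachov for the remaining assertions. There is nothing to compare against, however: the paper does not supply a proof of this theorem but simply records it as a known fact with a reference to Adams' monograph \cite{Adams}. Your outline is precisely the argument one finds there, so what you have written is a faithful expansion of the citation rather than an alternative approach.
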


\noindent
We recall the following Lions compactness lemma.
\begin{lemma}\label{Lions}(see \cite{Lions})
Let $N>p$ and $r\in [p, \p)$. If $\{u_{n}\}$ is a bounded sequence in $W^{1, p}(\R^{N})$ and if
\begin{equation}\label{ter4}
\lim_{n\rightarrow \infty} \sup_{y\in \R^{N}} \int_{\B_{R}(y)} |u_{n}|^{r} dx=0,
\end{equation}
where $R>0$, then $u_{n}\rightarrow 0$ in $L^{t}(\R^{N})$ for all $t\in (p, \p)$.
\end{lemma}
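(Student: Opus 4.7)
The plan is to combine Hölder interpolation on balls, the local Sobolev embedding, and a bounded-overlap covering of $\R^{N}$. Fix an auxiliary exponent $s\in(r,\p)$ to be chosen and $\lambda\in(0,1)$ determined by $\frac{1}{s}=\frac{1-\lambda}{r}+\frac{\lambda}{\p}$. For every $y\in\R^{N}$, Hölder's inequality on $\B_{R}(y)$ gives
\begin{equation*}
\int_{\B_{R}(y)} |u|^{s}\,dx \leq \Bigl(\int_{\B_{R}(y)} |u|^{r}\,dx\Bigr)^{(1-\lambda)s/r}\Bigl(\int_{\B_{R}(y)} |u|^{\p}\,dx\Bigr)^{\lambda s/\p},
\end{equation*}
while the local embedding $W^{1,p}(\B_{R}(y))\hookrightarrow L^{\p}(\B_{R}(y))$ (a consequence of Theorem~\ref{Sembedding} after a standard cutoff/extension argument) yields
\begin{equation*}
\Bigl(\int_{\B_{R}(y)} |u|^{\p}\,dx\Bigr)^{p/\p}\leq C_{R}\int_{\B_{R}(y)} \bigl(|\nabla u|^{p}+|u|^{p}\bigr)\,dx.
\end{equation*}

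Next I would choose $s$ so that $\lambda s=p$; elementary algebra gives $s=p+\frac{r(\p-p)}{\p}$, and the required inequalities $r<s<\p$ both reduce to $(\p-p)^{2}>0$. With this calibration, the two displayed estimates combine into
\begin{equation*}
\int_{\B_{R}(y)} |u|^{s}\,dx \leq C_{R}\Bigl(\int_{\B_{R}(y)} |u|^{r}\,dx\Bigr)^{(1-\lambda)s/r}\int_{\B_{R}(y)} \bigl(|\nabla u|^{p}+|u|^{p}\bigr)\,dx.
\end{equation*}
I then cover $\R^{N}$ by a countable family $\{\B_{R}(y_{i})\}_{i}$ enjoying the bounded-overlap property $\sum_{i}\mathbf{1}_{\B_{R}(y_{i})}\leq M_{0}(N)$ almost everywhere (centers on the lattice $\frac{R}{\sqrt{N}}\mathbb{Z}^{N}$ will do). Summing the previous inequality over $i$ and pulling the overlap bound out of the $W^{1,p}$-term,
\begin{equation*}
\int_{\R^{N}} |u_{n}|^{s}\,dx \leq C_{R}M_{0}\Bigl(\sup_{y\in\R^{N}}\int_{\B_{R}(y)} |u_{n}|^{r}\,dx\Bigr)^{(1-\lambda)s/r}\|u_{n}\|_{1,p}^{p}.
\end{equation*}
By hypothesis \eqref{ter4} the supremum tends to zero, while $\{\|u_{n}\|_{1,p}\}$ is bounded, hence $u_{n}\to 0$ in $L^{s}(\R^{N})$.

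To upgrade this to every $t\in(p,\p)$, I would invoke standard Hölder interpolation: if $t\in(p,s]$, writing $\frac{1}{t}=\frac{1-\mu}{p}+\frac{\mu}{s}$ gives $|u_{n}|_{t}\leq |u_{n}|_{p}^{1-\mu}|u_{n}|_{s}^{\mu}$, with $|u_{n}|_{p}$ bounded; if $t\in[s,\p)$, one interpolates between $L^{s}$ and $L^{\p}$ instead, the latter being bounded by Theorem~\ref{Sembedding}. In both cases $|u_{n}|_{t}\to 0$. The delicate point — what I would flag as the main obstacle — is the calibration $\lambda s = p$: it is precisely this choice that turns the tail factors $\|u\|_{L^{\p}(\B_{R}(y_{i}))}^{\lambda s}$ into additive quantities controllable by $\|u\|_{1,p}^{p}$ through bounded overlap; with any other exponent the summation step collapses.
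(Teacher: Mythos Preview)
Your proof is correct and follows essentially the same route as the paper's (commented-out draft) argument: H\"older interpolation on balls between $L^{r}$ and $L^{\p}$, local Sobolev embedding, a bounded-overlap covering of $\R^{N}$, and a final interpolation to reach every $t\in(p,\p)$. Your write-up is in fact more careful than the paper's sketch: you make explicit the calibration $\lambda s=p$ that renders the local $L^{\p}$ factor additive under the covering sum, whereas the paper's draft passes to the global $W^{1,p}$-norm on each ball and writes the summed estimate without justifying why the resulting series converges.
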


\noindent

We also have the following useful lemma.
\begin{lemma}\label{lemVince}(see \cite{AlvesNA, MeW})
Let $\eta_{n}: \R^{N}\ri \R^{K}$, $K\geq 1$, with $\eta_{n}\in L^{t}(\R^{N})\times \dots\times L^{t}(\R^{N})$ ($t>1$),
$\eta_{n}(x)\ri 0$ a.e. in $\R^{K}$ and $A(y)=|y|^{t-2}y$, $y\in \R^{K}$. Then, if $|\eta_{n}|_{t}\leq C$ for all $n\in \mathbb{N}$, we have
$$
\int_{\R^{N}} |A(\eta_{n}+w)-A(\eta_{n})-A(w)|^{t'}dx=o_{n}(1)
$$ 
for each $w\in L^{t}(\R^{N})\times \dots\times L^{t}(\R^{N})$ fixed, and $t'=\frac{t}{t-1}$ is the conjugate exponent of $t$.
\end{lemma}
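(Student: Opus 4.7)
The plan is a classical Brezis--Lieb-type splitting argument, based on a pointwise algebraic inequality combined with Vitali's convergence theorem. Setting $g_n(x) := |A(\eta_n(x) + w(x)) - A(\eta_n(x)) - A(w(x))|^{t'}$, the continuity of $A$ together with $A(0) = 0$ and $\eta_n \to 0$ a.e. immediately gives $g_n(x) \to 0$ pointwise a.e. The task is then to upgrade this to $L^1$-convergence of $g_n$.

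The central step is to establish the pointwise estimate
\begin{equation*}
|A(a+b) - A(a) - A(b)|^{t'} \leq \delta\, |a|^t + C_\delta\, |b|^t \qquad \forall\, a, b \in \R^K,
\end{equation*}
valid for every $\delta > 0$ with some $C_\delta > 0$ depending only on $\delta, t, K$. I would first prove the unpowered version $|A(a+b) - A(a) - A(b)| \leq \delta\, |a|^{t-1} + C_\delta\, |b|^{t-1}$ by a homogeneity argument, exploiting $A(\lambda y) = \lambda^{t-1} A(y)$ for $\lambda > 0$. Write $h(a,b) := A(a+b) - A(a) - A(b)$. For $a \neq 0$ and $|b| \leq \eta |a|$, the scaling identity $h(a,b) = |a|^{t-1} h(a/|a|, b/|a|)$ reduces the problem to bounding $h$ on the compact set $\{(a',b') : |a'|=1, |b'| \leq \eta\}$; since $h$ is continuous and $h(a',0) = 0$, the corresponding supremum tends to zero with $\eta$, producing the $\delta |a|^{t-1}$ bound. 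In the opposite regime $|b| \geq \eta |a|$, the crude inequality $|h(a,b)| \leq |a+b|^{t-1} + |a|^{t-1} + |b|^{t-1}$ combined with $|a| \leq |b|/\eta$ yields the $C_\delta |b|^{t-1}$ bound. Raising to the $t'$ power and using $(t-1)t' = t$ together with $(x+y)^{t'} \leq 2^{t'-1}(x^{t'}+y^{t'})$ produces the displayed estimate after relabelling constants.

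With this inequality at hand, the two Vitali hypotheses follow at once from $|\eta_n|_t \leq C$ and $|w|^t \in L^1(\R^N)$. For tightness, on the complement of a ball $\B_R(0)$ we have
\begin{equation*}
\int_{\R^N \setminus \B_R(0)} g_n \, dx \leq \delta\, |\eta_n|_t^t + C_\delta \int_{\R^N \setminus \B_R(0)} |w|^t \, dx \leq \delta\, C^t + C_\delta \int_{\R^N \setminus \B_R(0)} |w|^t \, dx,
\end{equation*}
which is made arbitrarily small by first choosing $\delta$ small and then $R$ large. For equi-absolute continuity on a measurable set $F$ of small measure, the same pointwise bound and the absolute continuity of $\int |w|^t$ force $\sup_n \int_F g_n \, dx \to 0$ as $|F| \to 0$. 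Vitali's convergence theorem then gives $\int_{\R^N} g_n \, dx = o_n(1)$.

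The main obstacle is the pointwise inequality of the second step, since the standard scalar Brezis--Lieb trick has to be adapted to the vector-valued map $A : \R^K \to \R^K$; the homogeneity-plus-case-split strategy is however completely uniform in $K$, so the vector-valued case costs nothing beyond the scalar one, and the rest of the argument is routine.
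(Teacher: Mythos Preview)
Your argument is correct. The paper does not supply its own proof of this lemma; it is stated with a citation to \cite{AlvesNA, MeW} and used as a black box. Your proposal essentially reconstructs the standard proof from those references: the homogeneity-and-compactness derivation of the pointwise inequality $|A(a+b)-A(a)-A(b)|^{t'}\le \delta|a|^{t}+C_{\delta}|b|^{t}$ is exactly the mechanism used there, and the passage from this estimate to $L^{1}$-convergence is routine. The only minor stylistic difference is that the cited sources (and the paper itself, in the analogous proof of Lemma~\ref{lem7}(iii)) typically finish with the dominated convergence theorem applied to $G_{\delta,n}:=\max\{g_{n}-\delta|\eta_{n}|^{t},0\}\le C_{\delta}|w|^{t}$, rather than invoking Vitali; the two endings are equivalent and your tightness/equi-integrability verification is clean.
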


For $\e>0$, we define the space
\begin{align*}
\X_{\e}=\left \{u\in W^{1, p}(\R^{N})\cap W^{1, q}(\R^{N}) : \int_{\R^{N}} V(\e x) \left(|u|^{p}+|u|^{q}\right) \,dx<\infty \right \}
\end{align*}
endowed with the norm
\begin{align*}
\|u\|_{\e}= \|u\|_{V, p} + \|u\|_{V, q},
\end{align*}
where
\begin{align*}
\|u\|_{V, t}^{t}= |\nabla u|_{t}^{t} + \int_{\R^{N}} V(\e x) |u|^{t}\, dx \quad \mbox{ for all } t>1.
\end{align*}
Then the following embedding lemma hold.
\begin{lemma}\label{embedding}(see \cite{AFans})
The space $\X_{\e}$ is continuously embedded into $W^{1, p}(\R^{N})\cap W^{1, q}(\R^{N})$.
Therefore $\X_{\e}$ is continuously embedded in $L^{t}(\R^{N})$ for any $t\in [p, \q]$ and compactly embedded in $L^{t}(B_{R})$, for all $R>0$ and any $t\in [1, \q)$.
\end{lemma}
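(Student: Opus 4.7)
The plan is to leverage assumption \eqref{V0} to reduce everything to the classical Sobolev and Rellich--Kondrachov theorems applied to the unweighted spaces $W^{1,p}(\R^{N})$ and $W^{1,q}(\R^{N})$. The only ingredient beyond these will be a simple interpolation step to cover intermediate Lebesgue exponents.

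For the continuous embedding $\X_{\e}\hookrightarrow W^{1,p}(\R^{N})\cap W^{1,q}(\R^{N})$, I would exploit \eqref{V0}, which guarantees $V(\e x)\geq V_{0}>0$ pointwise, uniformly in $\e$ and $x$. Then for $t\in\{p,q\}$,
\begin{equation*}
|\nabla u|_{t}^{t} + V_{0}|u|_{t}^{t} \leq |\nabla u|_{t}^{t} + \int_{\R^{N}} V(\e x)|u|^{t}\,dx = \|u\|_{V,t}^{t},
\end{equation*}
so $\|u\|_{1,t}\leq \max\{1,V_{0}^{-1/t}\}\,\|u\|_{V,t}$, and summing over $t\in\{p,q\}$ yields $\|u\|_{1,p}+\|u\|_{1,q}\leq C\|u\|_{\e}$ with $C$ independent of $\e$. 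Invoking Theorem~\ref{Sembedding} for each factor then produces the embedding $\X_{\e}\hookrightarrow L^{t}(\R^{N})$ for every $t\in[p,\p]\cup[q,\q]$. When $\p\geq q$ this already covers the full range $[p,\q]$; in the opposite case I would fill the gap $(\p,q)$ using the standard interpolation inequality $|u|_{t}\leq |u|_{p}^{1-\theta}|u|_{\q}^{\theta}$, valid since $u\in L^{p}(\R^{N})\cap L^{\q}(\R^{N})$ by what was just established.

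For the compact embedding $\X_{\e}\hookrightarrow L^{t}(B_{R})$ with $t\in[1,\q)$, the first step reduces the problem to the classical Rellich--Kondrachov theorem applied to $W^{1,q}(B_{R})\hookrightarrow\hookrightarrow L^{t}(B_{R})$ for each $t\in[1,\q)$, composed with the continuous inclusion from the first step. I do not anticipate any substantive obstacle here: the argument is essentially bookkeeping on top of standard results, and the only mildly subtle point is the interpolation required when $\p<q$, a situation that can genuinely occur when $p$ is small and $q$ is close to $N$.
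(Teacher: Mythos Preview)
Your argument is correct. Note that the paper does not actually supply a proof of this lemma: it simply records the statement and cites \cite{AFans}. Your approach---using the pointwise lower bound $V\geq V_{0}>0$ from \eqref{V0} to control the unweighted Sobolev norms, then invoking Theorem~\ref{Sembedding} and Rellich--Kondrachov, with interpolation to cover the possible gap $(\p,q)$---is the standard one and is presumably what \cite{AFans} does as well. There is nothing to add; your handling of the case $\p<q$ (which, as you observe, can genuinely occur under the hypotheses $1<p<q<N$) is the only point requiring care, and you address it correctly.
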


\noindent
\begin{lemma}\label{lem6}(see \cite{AFans})
If $V_{\infty}=\infty$, the embedding $\X_{\e}\subset L^{m}(\R^{N})$ is compact for any $p\leq m<\q$.
\end{lemma}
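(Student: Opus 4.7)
The plan is to show that any bounded sequence in $\X_{\e}$ admits a subsequence that converges strongly in $L^{m}(\R^{N})$ for $p\leq m<\q$, by combining local compactness (Rellich--Kondrachov from Lemma~\ref{embedding}) with tail estimates based on the coercivity of $V$ implied by $V_{\infty}=\infty$.

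First, I would take $\{u_{n}\}\subset \X_{\e}$ bounded, say $\|u_{n}\|_{\e}\leq C$, and by reflexivity and Lemma~\ref{embedding} extract a subsequence with $u_{n}\rightharpoonup u$ in $\X_{\e}$, hence weakly in $W^{1,p}(\R^{N})\cap W^{1,q}(\R^{N})$, and $u_{n}\to u$ in $L^{m}(B_{R})$ for every $R>0$ and $m\in [1,\q)$. Thus it remains to control the tails $\int_{|x|>R}|u_{n}-u|^{m}\,dx$ uniformly in $n$.

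The key ingredient is that, since $V_{\infty}=\infty$, for every $M>0$ we may pick $R=R(M,\e)>0$ such that $V(\e x)\geq M$ for $|x|>R$. Then for $t\in\{p,q\}$,
\begin{equation*}
\int_{|x|>R}|u_{n}|^{t}\,dx \leq \frac{1}{M}\int_{|x|>R} V(\e x)|u_{n}|^{t}\,dx \leq \frac{\|u_{n}\|_{V,t}^{t}}{M} \leq \frac{C}{M},
\end{equation*}
so the $L^{p}$ and $L^{q}$ tails of $\{u_{n}\}$ vanish uniformly as $R\to\infty$. For $m\in [p,q]$, Hölder interpolation between $L^{p}$ and $L^{q}$ then yields that the $L^{m}$ tails also vanish uniformly. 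For $m\in (q,\q)$, I would interpolate between $L^{q}$ (whose tails are uniformly small by the above) and $L^{\q}$ (whose norm is uniformly bounded by Theorem~\ref{Sembedding} applied to $W^{1,q}(\R^{N})\hookrightarrow L^{\q}(\R^{N})$): writing $\frac{1}{m}=\frac{\alpha}{q}+\frac{1-\alpha}{\q}$ with $\alpha\in(0,1]$ since $m<\q$, one gets
\begin{equation*}
\Bigl(\int_{|x|>R}|u_{n}|^{m}\,dx\Bigr)^{1/m} \leq \Bigl(\int_{|x|>R}|u_{n}|^{q}\,dx\Bigr)^{\alpha/q}|u_{n}|_{\q}^{1-\alpha},
\end{equation*}
which tends to $0$ uniformly in $n$ as $R\to\infty$.

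Since $u\in L^{m}(\R^{N})$ enjoys the same tail estimate (by weak lower semicontinuity or Fatou), splitting $\R^{N}=B_{R}\cup (\R^{N}\setminus B_{R})$ and first choosing $R$ large to control tails, then letting $n\to\infty$ to use the strong convergence on $B_{R}$, I obtain $u_{n}\to u$ in $L^{m}(\R^{N})$. The main technical point to get right is the interpolation for $m$ close to $\q$, where the restriction $m<\q$ is essential to ensure $\alpha>0$; the rest is a routine diagonal/tail argument.
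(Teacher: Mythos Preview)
Your argument is correct and is the standard one: the paper does not give its own proof of this lemma but simply cites \cite{AFans}, so there is nothing to compare against beyond noting that your coercivity-plus-interpolation approach is exactly the expected route. The only cosmetic point is that reflexivity of $\X_{\e}$ is being used implicitly; this is unproblematic since $\X_{\e}$ embeds continuously into the reflexive space $W^{1,p}(\R^{N})\cap W^{1,q}(\R^{N})$ and the weighted norm is uniformly convex, but you might mention it in one line.
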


Finally we have the following splitting lemma which will be very useful in this work.

\begin{lemma}\label{lem7}
Let $\{u_{n}\}\subset \X_{\e}$ be a sequence such that $u_{n}\rightharpoonup u$ in $\X_{\e}$. Set $v_{n}=u_{n}-u$.  Then we have
\begin{compactenum}[$(i)$]
\item $\displaystyle{|\nabla v_{n}|_{p}^{p}+|\nabla v_{n}|_{q}^{q}= \left(|\nabla u_{n}|_{p}^{p}+|\nabla u_{n}|_{q}^{q}\right) - \left(|\nabla u|_{p}^{p}+|\nabla u|_{q}^{q}\right)+o_{n}(1)}$,
\item $\displaystyle\int_{\R^{N}} V(\e x) \left(|v_{n}|^{p}+ |v_{n}|^{q}\right) \, dx= \int_{\R^{N}} V(\e x) \left(|u_{n}|^{p}+ |u_{n}|^{q}\right) \, dx- \int_{\R^{N}} V(\e x) \left(|u|^{p}+ |u|^{q}\right) \, dx +o_{n}(1)$
\item $\displaystyle{\int_{\R^{N}} \left(F(v_{n})- F(u_{n})+ F(u)\right) \, dx =o_{n}(1)}$,
\item $\displaystyle{\sup_{\|w\|_{\e}\leq 1} \int_{\R^{N}} |\left(f(v_{n}) - f(u_{n})+ f(u)\right) w|\, dx = o_{n}(1)}$.
\end{compactenum}
\end{lemma}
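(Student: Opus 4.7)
All four statements rest on the Brezis--Lieb paradigm combined with the a.e.\ convergence obtained from the local compact embedding in Lemma~\ref{embedding}. Passing to a subsequence we have $u_n\to u$ in $L^t_{loc}(\R^N)$ for every $t\in[1,\q)$, hence $u_n\to u$ and $v_n=u_n-u\to 0$ a.e.\ in $\R^N$. Moreover $\{u_n\}$ and $\{v_n\}$ are bounded in $\X_\e$, and therefore in $L^t(\R^N)$ for every $t\in[p,\q]$.

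For parts (i) and (ii) one applies the classical Brezis--Lieb lemma to each of the bounded sequences $V(\e x)^{1/p}u_n$, $V(\e x)^{1/q}u_n$ (which converge a.e.\ and are uniformly bounded in $L^p,L^q$ respectively by definition of $\X_\e$), and sums the two resulting identities to get (ii). Part (i) is identical with $\nabla u_n$ in place of $V(\e x)^{1/t}u_n$, provided we also pass to a subsequence with $\nabla u_n\to\nabla u$ a.e.---the standard framework in which this splitting lemma is later invoked, the a.e.\ gradient convergence being derivable on Palais--Smale sequences via a Boccardo--Murat type argument.

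For (iii) write
\[
F(u_n)-F(v_n)-F(u)=u\int_{0}^{1}\bigl[f(v_n+s u)-f(s u)\bigr]\,ds;
\]
continuity of $f$ and $v_n\to 0$ a.e.\ force the integrand to vanish a.e., while the subcritical growth $|f(t)|\le C(|t|^{p-1}+|t|^{r-1})$ from $(f_{2})$-$(f_{3})$ furnishes a pointwise dominating function in terms of $|u|^{p}+|u|^{r}+|v_n|^{p}+|v_n|^{r}$. Uniform $L^p\cap L^r$-boundedness supplies the equi-integrability required by Vitali's convergence theorem, yielding (iii).

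Part (iv) is the genuine obstacle, since the $o_n(1)$ must be uniform over the unit ball of $\X_\e$. Since $\X_\e\hookrightarrow L^p\cap L^r$ by Lemma~\ref{embedding}, H\"older's inequality reduces the matter to showing
\[
h_n:=f(u_n)-f(v_n)-f(u)\longrightarrow 0 \quad\text{in } L^{p'}(\R^N)+L^{r'}(\R^N).
\]
The pure-power case is exactly Lemma~\ref{lemVince} applied with $A(y)=|y|^{t_0-2}y$, $\eta_n=v_n$, $w=u$ for $t_0\in\{p,r\}$, giving
\[
\bigl\||u_n|^{t_0-2}u_n-|v_n|^{t_0-2}v_n-|u|^{t_0-2}u\bigr\|_{t_0'}=o_n(1).
\]
The extension to the general continuous $f$ uses the growth splitting $|f(t)|\le\delta|t|^{p-1}+C_\delta|t|^{r-1}$: the two power pieces are controlled by the displayed convergence, while the $\delta$-error is absorbed by an $\varepsilon$-argument, and Vitali's theorem closes the gap once equi-integrability is established via the two-sided growth bound. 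The main difficulty is precisely this simultaneous handling of both subcritical regimes of $f$ together with the passage from weak to norm duality convergence on the unbounded domain $\R^N$.
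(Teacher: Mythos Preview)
Parts (i)--(iii) match the paper: it too cites Brezis--Lieb for (i)--(ii), and your Vitali phrasing of (iii) is equivalent to the paper's $G_{\eta,n}$ device, both resting on the bound $|F(u_n)-F(v_n)-F(u)|\le\eta(|v_n|^p+|v_n|^{\q})+C_\eta(|u|^p+|u|^{\q})$ obtained from $F(u_n)-F(v_n)=\int_0^1 u\,f(v_n+tu)\,dt$ plus Young's inequality. Your flag that (i) tacitly needs a.e.\ convergence of $\nabla u_n$ is fair; the paper glosses over this too.

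The genuine gap is in (iv). Neither mechanism you invoke works for $f\in C^0$. Lemma~\ref{lemVince} treats only the explicit map $A(y)=|y|^{t-2}y$, and the growth bound $|f(t)|\le\delta|t|^{p-1}+C_\delta|t|^{r-1}$ controls $|f|$ pointwise, not the \emph{difference} $h_n=f(u_n)-f(v_n)-f(u)$; there is no decomposition of $h_n$ into pure-power pieces to which that lemma applies. The Vitali route succeeded for (iii) only because the mean-value identity for $F$ produced a factor $|u|$ multiplying $|f|$, which Young's inequality then separated; for $f$ itself one would need the pointwise estimate $|f(a+b)-f(a)|\le\epsilon(|a|^{p-1}+|a|^{r-1})+C_\epsilon(|b|^{p-1}+|b|^{r-1})$, and this fails for merely continuous $f$ since the modulus of continuity of $f$ near a fixed $a\neq 0$ need not be dominated by $|b|^{p-1}$. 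Without such a bound there is no uniform-in-$w$ control of $\int_{|x|>R}|h_n w|\,dx$, because $v_n$ carries mass at infinity. The paper explicitly notes that the $C^1$ argument of \cite{AFans} breaks down here, and replaces it by a direct region-by-region estimate: fix $\eta>0$, choose thresholds $r_0,r_1$ from $(f_2)$, $(f_3)$ so that $|f|\le\eta|\cdot|^{p-1}$ on $[0,2r_0]$ and $|f|\le\eta|\cdot|^{\q-1}$ on $[r_1-1,\infty)$, use uniform continuity of $f$ on the compact interval $[-r_1-1,r_1+1]$ for the intermediate range $\{r_0\le|u_n|\le r_1\}\cap\{|u|\le\delta\}$, and dispose of the small-measure sets $\{|u|\ge\delta\}$ and $\{|u_n-u|\ge 1\}$ via H\"older together with the fixed integrability of $u$. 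This layered decomposition is the missing idea in your sketch.
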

\begin{proof}
It is clear that $(i)$ and $(ii)$ are consequences of the well-known Brezis-Lieb lemma \cite{BL}. The proofs of $(iii)$ and $(iv)$ are given in \cite{AFans} for $f\in C^{1}$. Since here we are assuming $f\in C^{0}$, we need to use different arguments.
We start by proving $(iii)$. 
Let us note that $u_{n}=v_{n}+u$ and
$$
F(u_{n})-F(v_{n})=\int_{0}^{1} \frac{d}{dt} F(v_{n}+tu)\,dt=\int_{0}^{1} u f(v_{n}+tu)\,dt.
$$
In view of $(f_{2})$ and $(f_{3})$, for any $\delta>0$ there exists $c_{\delta}>0$ such that
\begin{align}
&|f(t)|\leq p\delta |t|^{p-1}+ c_{\delta}|t|^{\q-1} \quad \mbox{ for all } t\in \mathbb{R}, \label{Ftiodio} \\
&|F(t)|\leq \delta |t|^{p}+ c'_{\delta}|t|^{\q} \quad \mbox{ for all } t\in \mathbb{R} \label{Ftiodio1}.
\end{align}
Using \eqref{Ftiodio} with $\delta=1$ and $(|a|+|b|)^{r}\leq C(r)(|a|^{r}+ |b|^{r})$ for any $a, b\in \R$ and $r\geq 1$, we can see that
\begin{align}\label{odio}
|F(u_{n})-F(v_{n})|\leq C |v_{n}|^{p-1}|u|+C |u|^{p}+C |v_{n}|^{\q-1}|u|+C |u|^{\q}.
\end{align}
Fix $\eta>0$. Applying the Young inequality $ab \leq \eta a^{r} + C(\eta) b^{r'}$ for all $a, b> 0$, with $r, r'\in (1, \infty)$ such that $\frac{1}{r}+\frac{1}{r'}=1$, to the first and the third term on the right hand side of \eqref{odio}, we deduce that
$$
|F(u_{n})-F(v_{n})|\leq \eta (|v_{n}|^{p}+|v_{n}|^{\q})+C_{\eta} (|u|^{p}+|u|^{\q})
$$
which together with \eqref{Ftiodio1} with $\delta=\eta$ implies that
$$
|F(u_{n})-F(v_{n})-F(u)|\leq \eta (|v_{n}|^{p}+|v_{n}|^{\q})+C'_{\eta} (|u|^{p}+|u|^{\q}).
$$
Let
$$
G_{\eta, n}(x)=\max\left\{|F(u_{n})-F(v_{n})-F(u)|-\eta(|v_{n}|^{p}+|v_{n}|^{\q}), 0\right\}.
$$
Then $G_{\eta, n}\rightarrow 0$ a.e. in $\mathbb{R}^{N}$ as $n\rightarrow \infty$ (recall that $v_{n}\ri 0$ a.e. in $\R^{N}$ as $n\rightarrow \infty$), and $0\leq G_{\eta, n}\leq C'_{\eta} (|u|^{p}+|u|^{\q})\in L^{1}(\mathbb{R}^{N})$. As a consequence of the dominated convergence theorem we get
$$
\int_{\mathbb{R}^{N}} G_{\eta, n}(x) \, dx\rightarrow 0 \quad \mbox{ as } n\rightarrow \infty.
$$
On the other hand, by the definition of $G_{\eta, n}$, it follows that
$$
|F(v_{n})-F(u_{n})+F(u)|\leq \eta(|v_{n}|^{p}+|v_{n}|^{\q})+G_{\eta, n}
$$
which together with the boundedness of $(u_{n})$ in $L^{p}(\mathbb{R}^{N})\cap L^{\q}(\mathbb{R}^{N})$ yields
$$
\limsup_{n\rightarrow \infty} \int_{\mathbb{R}^{N}} |F(v_{n})-F(u_{n})+F(u)| \, dx\leq C\eta.
$$
By the arbitrariness of $\eta>0$ we can deduce that $(iii)$ holds. Finally, we prove $(iv)$. 
For any fixed $\eta>0$, by $(f_{2})$ we can choose $r_{0}=r_{0}(\eta)\in(0, 1)$ such that 
\begin{equation}\label{ZZ1}
|f(t)|\leq \eta |t|^{p-1} \quad \mbox{ for } |t|\leq 2r_{0}.
\end{equation} 
On the other hand, by $(f_{3})$ we can pick $r_{1}= r_{1}(\eta)>2$ such that
\begin{equation}\label{ZZ2}
|f(t)|\leq \eta |t|^{\q -1} \quad \mbox{ for } |t|\geq r_{1}-1.
\end{equation} 
By the continuity of $f$, there exists $\delta= \delta(\eta)\in (0, r_{0})$ satisfying 
\begin{align}\label{ZZ3}
|f(t_{1})- f(t_{2})|\leq r_{0}^{p-1}\eta \quad \mbox{ for } |t_{1}-t_{2}|\leq \delta, \, |t_{1}|, |t_{2}|\leq r_{1}+1. 
\end{align}
Moreover, by $(f_{3})$ there exists a positive constant $c=c(\eta)$ such that 
\begin{equation}\label{ZZ4}
|f(t)|\leq c(\eta) |t|^{p-1} + \eta |t|^{\q-1} \quad \mbox{ for all } t\in \R.
\end{equation} 
In what follows, we shall estimate the following term:
$$
\int_{\R^{N}\setminus B_{R}(0)} |f(u_{n}-u)- f(u_{n})-f(u)||w|\, dx.
$$
Using \eqref{ZZ4} and $u\in L^{p}(\R^{N})\cap L^{\q}(\R^{N})$, we can find $R=R(\eta)>0$ such that 
\begin{align*}
\int_{\R^{N}\setminus B_{R}(0)} |f(u) w|\, dx &\leq c\left(\int_{\R^{N}\setminus B_{R}(0)} |u|^{\q}\, dx \right)^{\frac{\q-1}{\q}} |w|_{\q}+ c \left(\int_{\R^{N}\setminus B_{R}(0)} |u|^{p}\, dx \right)^{\frac{p-1}{p}} |w|_{p}\\
&\leq c \eta \|w\|_{1, q} + c\eta \|w\|_{1, p}\leq c\eta \|w\|_{\e}. 
\end{align*} 
Set $A_{n}=\{x\in \R^{N}\setminus B_{R}(0) : |u_{n}(x)|\leq r_{0}\}$. Invoking \eqref{ZZ1} and applying the H\"older inequality we get 
\begin{align}\label{ZZ5}
\int_{A_{n}\cap \{|u|\leq \delta\}} |f(u_{n})- f(u_{n}-u)||w|\, dx \leq \eta (|u_{n}|_{p}^{p-1} + |u_{n}-u|_{p}^{p-1})|w|_{p} \leq c \eta \|w\|_{\e}. 
\end{align}
Let $B_{n}= \{x\in \R^{N} \setminus B_{R}(0) : |u_{n}(x)|\geq r_{1}\}$. Then \eqref{ZZ2} and the H\"older inequality yield
\begin{align}\label{ZZ6}
\int_{B_{n}\cap \{|u|\leq \delta\}} |f(u_{n})- f(u_{n}-u)| |w| \, dx \leq \eta (|u_{n}|_{\q}^{\q-1} + |u_{n}-u|_{\q}^{\q-1}) |w|_{\q} \leq c\eta \|w\|_{\e}. 
\end{align}
Finally, define $C_{n}= \{x\in \R^{N} \setminus B_{R}(0) : r_{0}\leq |u_{n}(x)|\leq r_{1}\}$. Since $u_{n}\in W^{1, p}(\R^{N})$ it follows that $|C_{n}|<\infty$. Now \eqref{ZZ3} gives
\begin{align}\label{ZZ7}
\int_{C_{n}\cap \{|u|\leq \delta\}} |f(u_{n})- f(u_{n}-u)| |w| \, dx \leq r_{0}^{p-1}\eta |w|_{p} |C_{n}|^{\frac{p-1}{p}}\leq \eta |u_{n}|_{p} |w|_{p} \leq c \eta \|w\|_{\e}.    
\end{align}
Putting together \eqref{ZZ5}, \eqref{ZZ6} and \eqref{ZZ7}, we obtain that 
\begin{align}\label{ZZ8}
\int_{(\R^{N}\setminus B_{R}(0))\cap \{|u|\leq \delta\}} |f(u_{n})- f(u_{n}-u)| |w| \, dx \leq c \eta \|w\|_{\e} \quad \mbox{ for all } n\in \mathbb{N}.    
\end{align}
Next, we note that \eqref{ZZ4} implies 
\begin{align*}
|f(u_{n})- f(u_{n}-u)|\leq \eta (|u_{n}|^{\q-1} + |u_{n}-u|^{\q-1}) + c(\eta) (|u_{n}|^{p-1}+ |u_{n}-u|^{p-1}), 
\end{align*}
so we can see that  
\begin{align*}
&\int_{(\R^{N}\setminus B_{R}(0)) \cap \{|u|\geq \delta\}} |f(u_{n})- f(u_{n}-u)||w|\, dx \\
&\leq \int_{(\R^{N}\setminus B_{R}(0)) \cap \{|u|\geq \delta\}} \left[ \eta (|u_{n}|^{\q-1} + |u_{n}-u|^{\q-1}) |w|+ c(\eta) (|u_{n}|^{p-1}+ |u_{n}-u|^{p-1})|w| \right]\, dx\\
&\leq c\eta \|w\|_{\e} +  \int_{(\R^{N}\setminus B_{R}(0)) \cap \{|u|\geq \delta\}} c(\eta) (|u_{n}|^{p-1}+ |u_{n}-u|^{p-1})|w|\, dx. 
\end{align*}
Since $u\in W^{1, p}(\R^{N})$, we get $|(\R^{N}\setminus B_{R}(0)) \cap \{|u|\geq \delta\}|\ri 0$ as $R\ri \infty$. Then choosing $R=R(\eta)$ large enough we can infer
\begin{align*}
&\int_{(\R^{N}\setminus B_{R}(0)) \cap \{|u|\geq \delta\}} c(\eta) (|u_{n}|^{p-1}+ |u_{n}-u|^{p-1})|w|\, dx \\
&\quad \leq c(\eta) (|u_{n}|_{\q}^{p-1}+ |u_{n}-u|_{\q}^{p-1}) \,|w|_{\q} \, |(\R^{N}\setminus B_{R}(0)) \cap \{u\geq \delta\}|^{\frac{\q-p}{p}}\leq \eta \|w\|_{\e}, 
\end{align*}
where we have used the generalized H\"older inequality. Therefore
\begin{align*}
\int_{(\R^{N}\setminus B_{R}(0)) \cap \{|u|\geq \delta\}} |f(u_{n})- f(u_{n}-u)||w|\, dx \leq c \eta \|w\|_{\e} \quad \mbox{ for all } n\in \mathbb{N}, 
\end{align*}
which combined with \eqref{ZZ8} yields
\begin{align}\label{ZZ17}
\int_{\R^{N}\setminus B_{R}(0)} |f(u_{n})-f(u)- f(u_{n}-u)||w|\, dx \leq c \eta \|w\|_{\e} \quad \mbox{ for all } n\in \mathbb{N}. 
\end{align}
Now, recalling that $u_{n}\rightharpoonup u$ in $W^{1, p}(\R^{N})$, we may assume that, up to a subsequence, $u_{n}\ri u$ strongly converges in $L^{p}(B_{R}(0))$ and there exists $h\in L^{p}(B_{R}(0))$ such that $|u_{n}(x)|, |u(x)|\leq |h(x)|$ for a. e. $x\in B_{R}(0)$. 

It is clear that 
\begin{align}\label{ZZ18}
\int_{B_{R}(0)} |f(u_{n}-u)||w|\, dx \leq c\eta \|w\|_{\e}
\end{align}
provided that $n$ is big enough. Let us define $D_{n}=\{x\in B_{R}(0) : |u_{n}(x) - u(x)|\geq 1\}$. Thus
\begin{align*}
\int_{D_{n}} |f(u_{n})- f(u)||w|\, dx &\leq \int_{D_{n}} \left( c(\eta) (|u|^{p-1}+ |u_{n}|^{p-1}) + \eta (|u_{n}|^{\q-1} + |u|^{\q-1})\right) |w|\, dx \\
&\leq c\eta \|w\|_{\e} + 2c(\eta) \int_{D_{n}} |h|^{p-1}|w|\, dx \\
&\leq c\eta \|w\|_{\e} + 2c(\eta) \left( \int_{D_{n}} |h|^{p} \, dx\right)^{\frac{p-1}{p}} |w|_{p}. 
\end{align*}
Observing that $|D_{n}|\ri 0$ as $n\ri \infty$, we can deduce that
\begin{align}\label{ZZ19}
\int_{D_{n}} |f(u_{n})- f(u)||w| \, dx \leq c\eta \|w\|_{\e}. 
\end{align}
Since $u\in W^{1, p}(\R^{N})$, we know that $|\{|u|\geq L\}|\ri 0$ as $L\ri \infty$, so there exists $L= L(\eta)>0$ such that for all $n$ 
\begin{align}\label{ZZ20}
&\int_{(B_{R}(0)\setminus D_{n})\cap \{|u|\geq L\}} |f(u_{n})- f(u)||w|\, dx \nonumber \\
&\quad \leq \int_{(B_{R}(0)\setminus D_{n})\cap \{|u|\geq L\}} \left[\eta (|u_{n}|^{\q-1} + |u|^{\q-1})|w| + c(\eta) (|u_{n}|^{p-1}+ |u|^{p-1})|w| \right]\, dx \nonumber  \\
&\quad \leq c\eta \|w\|_{\e}+ c(\eta) (|u_{n}|_{\q}^{p-1} + |u|_{\q}^{p-1}) \,|w|_{\q}\, |(B_{R}(0)\setminus D_{n})\cap \{|u|\geq L\}|^{\frac{\q-p}{p}} \nonumber \\
&\quad \leq c \eta \|w\|_{\e}. 
\end{align}
On the other hand, by the dominated convergence theorem we can infer 
\begin{align*}
\int_{(B_{R}(0)\setminus D_{n})\cap \{|u|\leq L\}} |f(u_{n})- f(u)|^{p}\, dx \ri 0 \quad  \mbox{ as } n\ri \infty. 
\end{align*}
Consequently,
\begin{align}\label{ZZ21}
\int_{(B_{R}(0)\setminus D_{n})\cap \{|u|\leq L\}} |f(u_{n})- f(u)| |w|\, dx\leq c \eta \|w\|_{\e} 
\end{align}
for $n$ large enough. Putting together \eqref{ZZ19}, \eqref{ZZ20} and \eqref{ZZ21}, we have 
\begin{align*}
\int_{B_{R}(0)} |f(u_{n})- f(u)| |w|\, dx \leq c \eta \|w\|_{\e}. 
\end{align*}
This and \eqref{ZZ18} yield
\begin{align}\label{ZZ22}
\int_{B_{R}(0)} |f(u_{n})- f(u)- f(u_{n}-u)| |w| \, dx \leq c \eta \|w\|_{\e}. 
\end{align}
Taking into account \eqref{ZZ17} and \eqref{ZZ22}, we can conclude that for $n$ large enough
\begin{align*}
\int_{\R^{N}} |f(u_{n})- f(u)- f(u_{n}-u)| |w|\, dx \leq c \eta \|w\|_{\e}. 
\end{align*}
This completes the proof of lemma.
\end{proof}

\section{Functional setting}\label{Sect3}

\noindent
In this section we consider the following problem
\begin{equation}\tag{$P_{\e}$}\label{Pe}
\left\{
\begin{array}{ll}
-\Delta_{p} u -\Delta_{q} u + V(\e x) \left(|u|^{p-2}u + |u|^{p-2}u\right) = f(u) &\mbox{ in } \R^{N},\\
u\in W^{1, p}(\R^{N}) \cap W^{1, q}(\R^{N}), \quad u>0 \mbox{ in } \R^{N}.
\end{array}
\right.
\end{equation}

\noindent
In order to study \eqref{Pe}, we look for critical points of the functional $\I_{\e}: \X_{\e}\rightarrow \R$ defined as 
\begin{equation*}
\I_{\e}(u)=\frac{1}{p} |\nabla u|_{p}^{p} + \frac{1}{q}|\nabla u|_{q}^{q}+ \int_{\R^{N}} V(\e x) \left( \frac{1}{p}|u|^{p}+ \frac{1}{q}|u|^{q}\right)\, dx - \int_{\R^{N}} F(u) \, dx. 
\end{equation*}

\noindent
It is easy to see that $\I_{\e}\in C^{1}(\X_{\e}, \R)$ and its differential is given by
\begin{align*}
\langle \I'_{\e}(u), \varphi \rangle &= \int_{\R^{N}} |\nabla u|^{p-2}\nabla u\cdot \nabla \varphi \,dx+ \int_{\R^{N}} |\nabla u|^{q-2}\nabla u\cdot \nabla \varphi \,dx\ \\
&+ \int_{\R^{N}} V(\e x) (|u|^{p-2} u\,  +|u|^{q-2} u)\, \varphi \,dx - \int_{\R^{N}} f(u)\varphi \, dx
\end{align*}
for any $u, \varphi \in \X_{\e}$. 
Now, let us introduce the Nehari manifold associated to $\I_{\e}$, that is
\begin{equation*}
\N_{\e}= \left\{u\in \X_{\e}\setminus \{0\} : \langle \I'_{\e}(u), u\rangle =0  \right\}, 
\end{equation*}
and define
$$
c_{\e}= \inf_{u\in \N_{\e}} \I_{\e}(u).
$$ 
Let us note that $\I_{\e}$ possesses a mountain pass geometry \cite{AR}.
\begin{lemma}\label{lem2.1}
The functional $\I_{\e}$ satisfies the following conditions:
\begin{compactenum}[$(i)$]
\item there exist $\alpha, \rho >0$ such that $\I_{\e}(u)\geq \alpha$ with $\|u\|_{\e}=\rho$;
\item there exists $e\in \X_{\e}$ with $\|e\|_{\e}>\rho$ such that $\I_{\e}(e)<0$.
\end{compactenum}
\end{lemma}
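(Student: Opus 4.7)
The plan is to verify the two mountain pass conditions separately: condition (i) will follow from the subcritical growth hypotheses $(f_2)$–$(f_3)$ combined with the embeddings in Lemma \ref{embedding}, while condition (ii) is a direct consequence of the Ambrosetti--Rabinowitz condition $(f_4)$.

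For (i), I would begin from the standard estimate: $(f_2)$ and $(f_3)$ together imply that for every $\delta>0$ there exists $C_\delta>0$ such that
\begin{equation*}
|F(t)|\leq \delta|t|^{p}+C_\delta|t|^{\q} \quad \text{for all } t\in\R.
\end{equation*}
Inserting this into $\I_\e$ and using the continuous embeddings $\|u\|_{V,p}^{p}\geq c_1 |u|_p^p$ and $\|u\|_{V,q}^{\q}\geq c_2 |u|_{\q}^{\q}$ (consequences of Lemma \ref{embedding} and $V\geq V_0>0$), I obtain
\begin{equation*}
\I_\e(u)\geq \tfrac{1}{q}\bigl(\|u\|_{V,p}^{p}+\|u\|_{V,q}^{q}\bigr)-\delta C\,\|u\|_{V,p}^{p}-C_\delta C\,\|u\|_{V,q}^{\q}.
\end{equation*}
I would then choose $\delta>0$ so small that the first error is absorbed into $\tfrac{1}{q}$, and afterwards $\rho\in(0,1]$ so small that $C_\delta C \rho^{\q-q}\leq \tfrac{1}{2q}$ absorbs the last term; this yields $\I_\e(u)\geq \tfrac{1}{2q}\bigl(\|u\|_{V,p}^{p}+\|u\|_{V,q}^{q}\bigr)$ whenever $\|u\|_\e\leq\rho$. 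Finally, writing $a=\|u\|_{V,p}$, $b=\|u\|_{V,q}$ with $a+b=\rho\leq 1$, a short case analysis (using $x^p\geq x^q$ for $x\in(0,1]$ since $p<q$) gives $a^p+b^q\geq(\rho/2)^q$, so (i) holds with $\alpha$ of order $\rho^q$.

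For (ii), I would integrate $(f_4)$ in the standard way to produce constants $C_1,C_2>0$ with $F(t)\geq C_1 t^{\vartheta}-C_2$ for all $t\geq 0$. Fix any nonnegative $\varphi\in C^\infty_c(\R^N)\setminus\{0\}$, which automatically belongs to $\X_\e$. For $t>0$ the identity
\begin{equation*}
\I_\e(t\varphi)=\tfrac{t^p}{p}\|\varphi\|_{V,p}^{p}+\tfrac{t^q}{q}\|\varphi\|_{V,q}^{q}-\int_{\R^N}F(t\varphi)\,dx
\end{equation*}
together with the estimate on $F$ yields an upper bound dominated by the term $-C_1 t^{\vartheta}\int_{\R^N}\varphi^{\vartheta}\,dx$; since $\vartheta>q>p$, the right-hand side tends to $-\infty$ as $t\to\infty$, so taking $e:=t\varphi$ with $t$ large enough that both $\I_\e(e)<0$ and $\|e\|_\e>\rho$ hold completes (ii).

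No step here is genuinely hard; the mountain pass geometry is classical. The one mildly delicate point is the book-keeping for the sum-norm $\|\cdot\|_\e=\|\cdot\|_{V,p}+\|\cdot\|_{V,q}$ in part (i)---notably, passing from a lower bound on $\|u\|_{V,p}^{p}+\|u\|_{V,q}^{q}$ to one on $\|u\|_\e$---which is why I restrict to $\rho\leq 1$ and run the case analysis above.
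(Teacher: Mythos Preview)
Your proof is correct and follows essentially the same route as the paper's: both parts rely on the growth bound for $F$ coming from $(f_2)$--$(f_3)$ together with the Sobolev embeddings for (i), and on the Ambrosetti--Rabinowitz lower bound $F(t)\geq C_1 t^{\vartheta}-C_2$ for (ii). The only cosmetic differences are that the paper bounds $|F(t)|$ by $|t|^{p}$ and $|t|^{r}$ (with $r\in(q,\q)$ from $(f_3)$) rather than $|t|^{\q}$, and handles the passage from $\|u\|_{V,p}^{p}+\|u\|_{V,q}^{q}$ to a power of $\|u\|_{\e}$ via the elementary inequality $a^{q}+b^{q}\geq C_{q}(a+b)^{q}$ after noting $\|u\|_{V,p}^{p}\geq\|u\|_{V,p}^{q}$ for $\rho<1$, instead of your case analysis; both variants are equally valid.
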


\begin{proof}
$(i)$ Using $(f_2)$ and $(f_3)$, for any given $\xi>0$ there exists $C_{\xi}>0$ such that
\begin{align}
&|f(t)|\leq \xi |t|^{p-1}+ C_{\xi} |t|^{r-1} \quad \mbox{ for any } t\in \R \label{growthf}, \\
&|F(t)|\leq \frac{\xi}{p}|t|^{p}+ \frac{C_{\xi}}{r}|t|^{r} \quad \mbox{ for any } t\in \R. \label{growthF}
\end{align}
Hence, taking $\xi \in (0, V_{0})$, we have 
\begin{align*}
\I_{\e}(u) &\geq \frac{1}{p}\|u\|_{V, p}^{p} + \frac{1}{q} \|u\|_{V, q}^{q} -\frac{\xi}{p} |u|_{p}^{p} - \frac{C_{\xi}}{r}|u|_{r}^{r}\\
&\geq C_{1} \|u\|_{V, p}^{p}  + \frac{1}{q} \|u\|_{V, q}^{q} - C'_{\xi}\|u\|_{\e}^{r}.
\end{align*}
Choosing $\|u\|_{\e}= \rho\in (0, 1)$ and using $1<p<q$, we have $\|u\|_{V, p}<1$ and therefore $\|u\|_{V, p}^{p}\geq \|u\|_{V, p}^{q}$ which combined with $a^{t}+b^{t}\geq C_{t}(a+b)^{t}$ for any $a, b\geq 0$ and $t>1$, yields
\begin{align*}
\I_{\e}(u) \geq C \|u\|_{\e}^{q}- C'_{\xi}\|u\|_{\e}^{r}. 
\end{align*}
Since $r>q$ we can find $\alpha>0$ such that $\I_{\e}(u)\geq \alpha >0$ for $\|u\|_{\e}= \rho$. 

\noindent
$(ii)$ By $(f_4)$ we can infer
\begin{equation*}
F(t)\geq C_{1}|t|^{\vartheta} - C_{2} \quad \mbox{ for any } t\geq 0,
\end{equation*}
for some $C_{1}, C_{2}>0$. Taking $v \in C^{\infty}_{c}(\R^{N})$ such that $v\geq 0$, $v \not \equiv 0$, we have
\begin{equation*}
\I_{\e}(tv)\leq \frac{t^{p}}{p} \|v\|_{\e}^{p} + \frac{t^{q}}{q} \|v\|_{\e}^{q}- t^{\vartheta}C_{1} \int_{\supp v} v^{\vartheta} dx + C_{2}|\supp v|\rightarrow -\infty \mbox{ as } t\rightarrow \infty.
\end{equation*}
\end{proof}

Now, in view of Lemma \ref{lem2.1}, we can use a version of mountain pass theorem without the Palais-Smale condition \cite{W} to deduce the existence of a $(PS)$-sequence $\{u_{n}\}$ at level $c'_{\e}$, namely 
\begin{align*}
\I_{\e}(u_{n})\ri c'_{\e} \quad \mbox{ and } \quad \I'_{\e}(u_{n})\ri 0, 
\end{align*}
where $c'_{\e}$ is the mountain pass level of $\I_{\e}$ defined as
\begin{align*}
c'_{\e}= \inf_{\gamma \in \Gamma} \max_{t\in [0, 1]} \I_{\e}(\gamma(t)),
\end{align*}
and $\Gamma= \{\gamma \in C^{0}([0, 1], \X_{\e}) \, : \, \gamma(0)=0,  \,\I_{\e}(\gamma(1))<0\}$. 

\begin{lemma}\label{newlemma}
The following holds 
$$
c'_{\e}=c_{\e}=\inf_{u\in \X_{\e} \setminus \{0\}} \max_{t\geq 0} \I_{\e}(tu).
$$
\end{lemma}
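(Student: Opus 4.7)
The proof will rest on the classical Nehari-manifold analysis of the fiber maps $t\mapsto \I_{\e}(tu)$, combined with an intermediate value argument to connect mountain-pass paths to the Nehari manifold.

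\textbf{Step 1 (unique projection onto $\N_{\e}$).} I will first prove that for every $u\in \X_{\e}\setminus\{0\}$ with $u^{+}\not\equiv 0$, there exists a unique $t_{u}>0$ such that $t_{u}u\in \N_{\e}$, and moreover $\I_{\e}(t_{u}u)=\max_{t\geq 0}\I_{\e}(tu)$. The existence of a maximizer comes from the mountain-pass geometry of Lemma \ref{lem2.1}: $\I_{\e}(tu)>0$ for small $t>0$ by $(f_{2})$-$(f_{3})$, and $\I_{\e}(tu)\to -\infty$ as $t\to\infty$ by the super-$q$ growth consequence of $(f_{4})$ (since $\vartheta>q$). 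For uniqueness, the natural quantity to study is
\begin{equation*}
\frac{\langle \I'_{\e}(tu), u\rangle}{t^{q-1}}=t^{p-q}\Bigl(|\nabla u|_{p}^{p}+\int_{\R^{N}}V(\e x)|u|^{p}\,dx\Bigr)+\Bigl(|\nabla u|_{q}^{q}+\int_{\R^{N}}V(\e x)|u|^{q}\,dx\Bigr)-\int_{\{u>0\}}\frac{f(tu)}{(tu)^{q-1}}u^{q}\,dx,
\end{equation*}
which is strictly decreasing in $t>0$: the first term decreases because $p<q$, and the last term decreases by $(f_{5})$. Its limits being $+\infty$ and $-\infty$ force a unique zero $t_{u}$. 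For $u\leq 0$ a.e.\ one has $\I_{\e}(tu)=\frac{t^{p}}{p}\|u\|_{V,p}^{p}+\frac{t^{q}}{q}\|u\|_{V,q}^{q}$, which is strictly increasing with no critical point.

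\textbf{Step 2 (the identity $c_{\e}=\inf_{u\neq 0}\max_{t\geq 0}\I_{\e}(tu)$).} This is an immediate consequence of Step 1: for $u^{+}\not\equiv 0$ one has $\max_{t\geq 0}\I_{\e}(tu)=\I_{\e}(t_{u}u)\geq c_{\e}$, while for $u\leq 0$ a.e.\ the supremum is $+\infty\geq c_{\e}$; conversely each $v\in \N_{\e}$ satisfies $t_{v}=1$, so $\I_{\e}(v)=\max_{t\geq 0}\I_{\e}(tv)\geq\inf_{u\neq 0}\max_{t\geq 0}\I_{\e}(tu)$.

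\textbf{Step 3 (the identity $c'_{\e}=c_{\e}$).} For the inequality $c'_{\e}\leq c_{\e}$, given any $v\in\N_{\e}$ I choose $T>1$ so large that $\I_{\e}(Tv)<0$ (possible by $(f_{4})$); then $\gamma(s):=sTv$ belongs to $\Gamma$, and by Step 1 its maximum along $\gamma$ equals $\I_{\e}(v)$. For the reverse inequality, I will show that every $\gamma\in\Gamma$ crosses $\N_{\e}$. The key algebraic identity comes from $(f_{4})$: combining $\vartheta F(v)\leq f(v)v$ with $\vartheta>q$ and $F\geq 0$ gives
\begin{equation*}
q\I_{\e}(v)-\langle \I'_{\e}(v),v\rangle=\Bigl(\frac{q}{p}-1\Bigr)\|v\|_{V,p}^{p}+\int_{\R^{N}}\bigl(f(v)v-qF(v)\bigr)\,dx\geq 0,
\end{equation*}
so $\I_{\e}(\gamma(1))<0$ yields $\langle \I'_{\e}(\gamma(1)),\gamma(1)\rangle<0$. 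On the other hand, the estimates in the proof of Lemma \ref{lem2.1}$(i)$ imply $\langle \I'_{\e}(u),u\rangle>0$ whenever $0<\|u\|_{\e}$ is sufficiently small. Setting $t^{**}:=\sup\{t\in[0,1]:\gamma(t)=0\}<1$ and applying the intermediate value theorem to the continuous function $t\mapsto\langle \I'_{\e}(\gamma(t)),\gamma(t)\rangle$ on the interval $(t^{**},1]$ produces some $t^{*}\in(t^{**},1)$ with $\gamma(t^{*})\in\N_{\e}$, whence $\max_{t\in[0,1]}\I_{\e}(\gamma(t))\geq \I_{\e}(\gamma(t^{*}))\geq c_{\e}$.

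The main obstacle is the last IVT argument: a careless application gives only a zero of $\langle \I'_{\e}(\gamma(\cdot)),\gamma(\cdot)\rangle$, which is trivially satisfied at $\gamma(t)=0$ and need not correspond to a Nehari point. Introducing the threshold $t^{**}$ (the last time $\gamma$ vanishes) isolates the nontrivial portion of the path, and combining the sign information derived from $(f_{4})$ at $t=1$ with the small-norm positivity just after $t^{**}$ forces a crossing at a point where $\gamma(t^{*})\neq 0$.
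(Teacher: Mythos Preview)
Your proof is correct and follows essentially the same approach as the paper. The paper establishes the existence and uniqueness of the Nehari projection $t_{u}$ by the same fiber-map analysis (subtracting the two Nehari relations and invoking $(f_{5})$ together with $p<q$), and then defers the remaining minimax identities to the standard argument in Willem's book; your Steps~2--3 simply make that reference explicit via the path-crossing IVT argument, which is precisely the classical proof.
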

\begin{proof}
For each $u\in \mathbb{X}_{\e}\setminus\{0\}$ and $t>0$, let us introduce the function $h(t)=\I_{\e}(tu)$. Following the same arguments as in the proof of Lemma \ref{lem2.1} we deduce that 
$h(0)=0$, $h(t)<0$ for $t$ sufficiently large and $h(t)>0$ for $t$ sufficiently small. Hence, $\max_{t\geq 0} h(t)$ is achieved at $t=t_{u}>0$ satisfying $h'(t_{u})=0$ and $t_{u}u\in \N_{\e}$. 

Note that, if $u\in \N_{\e}$ then $u^{+}\neq 0$. Indeed, from $(f_{1})$, we can deduce that 
\begin{align*}
\|u\|^{p}_{V, p} + \|u\|^{q}_{V, q}= \int_{\R^{N}} f(u)u \, dx =\int_{\R^{N}} f(u^{+})u^{+} \, dx. 
\end{align*} 
Now, if $u^{+}\equiv 0$, then $\|u\|^{p}_{V, p} + \|u\|^{q}_{V, q}=0$, that is $u\equiv 0$, and this is a contradiction in view of $u\in\N_{\e}$. 

Next, we prove that $t_{u}$ is the unique critical point of $h$. Assume by contradiction that there exist $t_{1}$ and $t_{2}$ such that $t_{1}u, t_{2}u\in \N_{\e}$, that is
\begin{align*}
t_{1}^{p-q}|\nabla u|_{p}^{p} +|\nabla u|_{q}^{q}+ t_{1}^{p-q} \int_{\R^{N}} V(\e x) |u|^{p} \, dx + \int_{\R^{N}} V(\e x) |u|^{q}\, dx=\int_{\{u>0\}} \frac{f(t_{1}u)}{(t_{1}u)^{q-1} }u^{q}\, dx
\end{align*}
and
\begin{align*}
t_{2}^{p-q}|\nabla u|_{p}^{p} + |\nabla u|_{q}^{q}+ t_{2}^{p-q}\int_{\R^{N}} V(\e x) |u|^{p} \, dx + \int_{\R^{N}} V(\e x) |u|^{q}\, dx=\int_{\{u>0\}} \frac{f(t_{2}u)}{(t_{2}u)^{q-1} }u^{q}\, dx.
\end{align*}
Subtracting term by term in the above equalities we get
\begin{align*}
(t_{1}^{p-q}-t_{2}^{p-q})|\nabla u|_{p}^{p} + (t_{1}^{p-q}-t_{2}^{p-q}) \int_{\R^{N}} V(\e x)|u|^{p}\, dx = \int_{\{u>0\}} \left[\frac{f(t_{1}u)}{(t_{1}u)^{q-1} } - \frac{f(t_{2}u)}{(t_{2}u)^{q-1} } \right] u^{q} dx. 
\end{align*}
Now, if $t_{1}<t_{2}$, from $(f_{5})$ and recalling that $p<q$, we can infer 
\begin{align*}
0<(t_{1}^{p-q}-t_{2}^{p-q})|\nabla u|_{p}^{p} + (t_{1}^{p-q}-t_{2}^{p-q}) \int_{\R^{N}} V(\e x)|u|^{p}\, dx = \int_{\{u>0\}} \left[\frac{f(t_{1}u)}{(t_{1}u)^{q-1} } - \frac{f(t_{2}u)}{(t_{2}u)^{q-1} } \right] u^{q} dx<0, 
\end{align*}
which gives a contradiction.
Now we can argue as in \cite{W} to complete the proof. 
\end{proof}

Next, we prove the following useful result.
\begin{lemma}\label{lemB}
Let $\{u_{n}\}$ be a Palais-Smale sequence of $\I_{\e}$ at level $c$. Then 
\begin{compactenum}[$(i)$]
\item $\{u_{n}\}$ is bounded in $\X_{\e}$. 
\item $u_{n}^{-}\ri 0$ in $\X_{\e}$ and we may assume that $u_{n}\geq 0$ for any $n\in \mathbb{N}$. 
\end{compactenum}
\end{lemma}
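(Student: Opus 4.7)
The plan combines a standard Ambrosetti--Rabinowitz argument for $(i)$ with testing against the negative part for $(ii)$.

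For $(i)$, I would form the linear combination
\begin{equation*}
c + o_n(1) + o_n(1) \|u_n\|_\e \geq \I_\e(u_n) - \frac{1}{\vartheta} \langle \I'_\e(u_n), u_n \rangle,
\end{equation*}
and use $(f_4)$ together with $(f_1)$ (so that $f$ and $F$ vanish on $\{u_n \leq 0\}$) to conclude that the integrand $\frac{1}{\vartheta} f(u_n) u_n - F(u_n)$ is non-negative everywhere, yielding
\begin{equation*}
\left(\frac{1}{p} - \frac{1}{\vartheta}\right) \|u_n\|_{V, p}^p + \left(\frac{1}{q} - \frac{1}{\vartheta}\right) \|u_n\|_{V, q}^q \leq c + o_n(1) + o_n(1) \|u_n\|_\e.
\end{equation*}
Since $p < q < \vartheta$, both coefficients on the left are strictly positive. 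Splitting $\|u_n\|_\e = \|u_n\|_{V, p} + \|u_n\|_{V, q}$ and applying Young's inequality with the conjugate pairs $(p, p')$ and $(q, q')$, I can absorb the linear terms on the right into the left-hand side, obtaining uniform bounds on $\|u_n\|_{V, p}$ and $\|u_n\|_{V, q}$, hence on $\|u_n\|_\e$.

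For $(ii)$, the key computation is to take $u_n^- := \min\{u_n, 0\} \in \X_\e$ as a test function. Using the pointwise identities $|\nabla u_n|^{r-2} \nabla u_n \cdot \nabla u_n^- = |\nabla u_n^-|^r$ and $|u_n|^{r-2} u_n \, u_n^- = |u_n^-|^r$ a.e.\ for $r \in \{p, q\}$, together with $(f_1)$ which forces $f(u_n) u_n^- \equiv 0$, I would obtain
\begin{equation*}
\|u_n^-\|_{V, p}^p + \|u_n^-\|_{V, q}^q = \langle \I'_\e(u_n), u_n^- \rangle = o_n(1),
\end{equation*}
where the last equality follows from $\I'_\e(u_n) \to 0$ in $\X_\e^*$ combined with the boundedness of $\{u_n^-\}$ inherited from $(i)$. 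Hence $u_n^- \to 0$ in $\X_\e$. To pass to a non-negative sequence, I would replace $u_n$ by $u_n^+ = u_n - u_n^- \geq 0$: since $\|u_n^+ - u_n\|_\e = \|u_n^-\|_\e \to 0$ and $\I_\e \in C^1(\X_\e, \R)$, the new sequence $\{u_n^+\}$ inherits the Palais--Smale property at level $c$ and consists of non-negative functions.

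The main technical point is the Young-inequality absorption in $(i)$: one must choose exponents matching $p$ and $q$ so that $o_n(1) \|u_n\|_\e$ is dominated by the superlinear quantities $\|u_n\|_{V, p}^p$ and $\|u_n\|_{V, q}^q$, which is possible precisely because $p, q > 1$. The rest of the argument is direct, and no serious obstacle is anticipated.
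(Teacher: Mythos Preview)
Your proposal is correct and follows the same overall strategy as the paper, with two minor differences worth noting.

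For $(i)$, the paper obtains the same inequality
\[
\left(\tfrac{1}{q}-\tfrac{1}{\vartheta}\right)\bigl(\|u_n\|_{V,p}^p+\|u_n\|_{V,q}^q\bigr)\leq C(1+\|u_n\|_\e)
\]
but then argues by contradiction, splitting into three cases according to which of $\|u_n\|_{V,p}$, $\|u_n\|_{V,q}$ is unbounded. Your Young-inequality absorption is more direct and equally valid.

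For $(ii)$, the first step (testing against $u_n^-$) is identical. Your final sentence, however, is slightly loose: $\I_\e\in C^1$ alone does \emph{not} guarantee $\I'_\e(u_n^+)-\I'_\e(u_n)\to 0$ in $\X_\e^*$ when $u_n$ itself does not converge; one needs uniform continuity of $\I'_\e$ on bounded sets. The paper fills this in by an explicit computation using the elementary inequality $||a+b|^{t-2}(a+b)-|a|^{t-2}a|\leq \xi|a|^{t-1}+C_\xi|b|^{t-1}$. In your setting the fix is even simpler: since $u_n^+$ and $u_n^-$ have disjoint supports, one has pointwise $|\nabla u_n|^{t-2}\nabla u_n-|\nabla u_n^+|^{t-2}\nabla u_n^+=|\nabla u_n^-|^{t-2}\nabla u_n^-$ for $t\in\{p,q\}$, and $f(u_n)=f(u_n^+)$ by $(f_1)$, so $\langle\I'_\e(u_n)-\I'_\e(u_n^+),\varphi\rangle$ is controlled directly by $\|u_n^-\|_\e^{t-1}\|\varphi\|_\e\to 0$. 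With this one-line addition your argument is complete.
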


\begin{proof}
$(i)$ From $(f_{4})$ we have
\begin{align*}
C(1+\|u_{n}\|_{\e}) &\geq \I_{\e}(u_{n})- \frac{1}{\vartheta} \langle \I'_{\e}(u_{n}), u_{n} \rangle \\
&= \left( \frac{1}{p}- \frac{1}{\vartheta}\right) \|u_{n}\|_{V, p}^{p} + \left( \frac{1}{q}- \frac{1}{\vartheta}\right) \|u_{n}\|_{V, q}^{q} + \frac{1}{\vartheta} \int_{\R^{N}} \left( f(u_{n})u_{n}- \vartheta F(u_{n})\right) \, dx \\
&\geq \left( \frac{1}{p}- \frac{1}{\vartheta}\right) \|u_{n}\|_{V, p}^{p} + \left( \frac{1}{q}- \frac{1}{\vartheta}\right) \|u_{n}\|_{V, q}^{q}\\
&\geq \left(\frac{1}{q}- \frac{1}{\vartheta}\right) (\|u_{n}\|_{V, p}^{p} + \|u_{n}\|_{V, q}^{q}). 
\end{align*}

Now, assume by contradiction that $\|u_{n}\|_{\e}\rightarrow \infty$. We shall distinguish among the following cases: \\
{{\rm Case $1$.}} $\|u_{n}\|_{V, p}\ri \infty$ and $\|u_{n}\|_{V, q}\ri \infty$.  

\noindent
Since $p<q$, we have, for $n$ sufficiently large, that $\|u_{n}\|_{V, q}^{q-p}\geq 1$, that is $\|u_{n}\|_{V, q}^{q}\geq \|u_{n}\|_{V, q}^{p}$, and thus
\begin{align*}
C(1+ \|u_{n}\|_{\e})&\geq \left(\frac{1}{q}- \frac{1}{\vartheta}\right) \left(\|u_{n}\|_{V, p}^{p} + \|u_{n}\|_{V, q}^{p} \right)\\
&\geq C_{1} \left(\|u_{n}\|_{V, p} + \|u_{n}\|_{V, q} \right)^{p} =C_{1} \|u_{n}\|_{\e}^{p},  
\end{align*}
which gives a contradiction.

\noindent
{{\rm Case $2$.}}
$\|u_{n}\|_{V, p}\ri \infty$ and $\|u_{n}\|_{V, q}$ is bounded. 

\noindent
We can see that 
\begin{align*}
C\left( 1+ \|u_{n}\|_{V, p} + \|u_{n}\|_{V, q} \right)\geq \left(\frac{1}{q}- \frac{1}{\vartheta}\right) \|u_{n}\|_{V, p}^{p}
\end{align*}
implies
\begin{align*}
C\left( \frac{1}{\|u_{n}\|_{V, p}^{p}}+ \frac{1}{\|u_{n}\|_{V, p}^{p-1}} + \frac{\|u_{n}\|_{V, q}}{\|u_{n}\|_{V, p}^{p}} \right)\geq \left(\frac{1}{q}- \frac{1}{\vartheta}\right), 
\end{align*}
and letting $n\ri \infty$, we get $0\geq \left(\frac{1}{q}- \frac{1}{\vartheta}\right) >0$, which yields a contradiction.  

\noindent
$\|u_{n}\|_{V, p}$ is bounded and $\|u_{n}\|_{V, q}\ri \infty$. 

\noindent
{{\rm Case $3$.}} We can proceed similarly as in the case $(2)$. 

Hence, $\{u_{n}\}$ is bounded in $\X_{\e}$ and we may assume that $u_{n}\rightharpoonup u$ in $\X_{\e}$ and $u_{n}\rightarrow u$ a.e. in $\R^{N}$.

\noindent
$(ii)$ Since $\langle \I'_{\e}(u_{n}), u^{-}_{n}\rangle=o_{n}(1)$, where $u_{n}^{-}=\min\{u_{n}, 0\}$, and $f(t)=0$ for $t\leq 0$, we have that 
\begin{align*}
&\int_{\R^{N}} |\nabla u_{n}|^{p-2}\nabla u_{n}\cdot \nabla u^{-}_{n}\,dx + \int_{\R^{N}} |\nabla u_{n}|^{q-2}\nabla u_{n}\cdot \nabla u^{-}_{n}\,dx  \\
&+ \int_{\R^{N}} V(\e x) (|u_{n}|^{p-2} u_{n}\,  +|u_{n}|^{q-2} u_{n})\, u_{n}^{-} \,dx=o_{n}(1),
\end{align*}
from which it follows
$$
\|u_{n}^{-}\|_{V, p}^{p}+\|u_{n}^{-}\|_{V, q}^{q}= o_{n}(1),
$$
that is $u_{n}^{-}\ri 0$ in $\X_{\e}$.
Moreover, $\{u_{n}^{+}\}$ is bounded in $\X_{\e}$. Now, we prove that $\I_{\e}(u^{+}_{n})\ri c$ and $\I'_{\e}(u^{+}_{n})=o_{n}(1)$. Clearly, $\|u_{n}\|_{V, t}=\|u_{n}^{+}\|_{V, t}+o_{n}(1)$ for $t\in \{p, q\}$. On the other hand, by \eqref{growthF}, the mean value theorem, and since $u_{n}= u_{n}^{+}+ u_{n}^{-}$, we have
\begin{align*}
\left|\int_{\R^{N}} F(u_{n})\, dx-\int_{\R^{N}} F(u^{+}_{n})\, dx\right|&\leq C\int_{\R^{N}} (|u_{n}|^{p-1}+|u_{n}|^{r-1})|u_{n}^{-}|\, dx \\
&\leq C|u_{n}^{-}|_{p}+C|u_{n}^{-}|_{r}\leq C\|u_{n}^{-}\|_{V,p}+C\|u_{n}^{-}\|_{V,q}\leq C\|u_{n}^{-}\|_{\e}=o_{n}(1).
\end{align*}
This shows that $\I_{\e}(u^{+}_{n})\ri c$. Next, we claim that $\I'_{\e}(u^{+}_{n})=o_{n}(1)$. Fix $\varphi\in \X_{\e}$ such that $\|\varphi\|_{\e}\leq 1$. Then we have 
\begin{align*}
&\left|\langle \I'_{\e}(u_{n}), \varphi\rangle-\langle \I'_{\e}(u^{+}_{n}), \varphi\rangle\right|\\
&=\Bigl| \int_{\R^{N}} [|\nabla u_{n}|^{p-2}\nabla u_{n}- |\nabla u_{n}^{+}|^{p-2} \nabla u_{n}^{+}] \nabla \varphi \,dx +\int_{\R^{N}} [|\nabla u_{n}|^{q-2}\nabla u_{n}- |\nabla u_{n}^{+}|^{q-2} \nabla u_{n}^{+}] \nabla \varphi \,dx \\
&+\int_{\R^{N}} V(\e x) [(|u_{n}|^{p-2} u_{n}\,  +|u_{n}|^{q-2} u_{n})- (|u_{n}^{+}|^{p-2} u_{n}^{+}\,  +|u_{n}^{+}|^{q-2} u_{n}^{+})]\, \varphi \,dx\\
&-\int_{\R^{N}} [f(u_{n}) - f(u_{n}^{+})]\varphi \, dx \Bigr|. 
\end{align*}
Now, recalling that for all $\xi>0$ there exists $C_{\xi}>0$ such that
\begin{align*}
||a+b|^{t-2}(a+b) - |a|^{t-2}a|\leq \xi |a|^{t-1} + C_{\xi} |b|^{t-1} \quad \mbox{ for all } a, b\in \R^{N} \mbox{ and } t>1, 
\end{align*}
we see that for $t\in \{p, q\}$ the following holds
\begin{align*}
&\Bigl| \int_{\R^{N}} [|\nabla u_{n}|^{t-2}\nabla u_{n}- |\nabla u_{n}^{+}|^{t-2} \nabla u_{n}^{+}] \nabla \varphi \,dx\Bigr|\\
&\leq \xi |\nabla u_{n}^{+}|_{t}^{t-1} |\nabla \varphi|_{t} + C_{\xi} |\nabla u_{n}^{-}|_{t}^{t-1} |\nabla \varphi|_{t}\\
&\leq \xi C + C'_{\xi} \|u_{n}^{-}\|_{\e}^{t-1}. 
\end{align*}
Consequently, 
\begin{align*}
\limsup_{n\ri \infty} \Bigl| \int_{\R^{N}} [|\nabla u_{n}|^{t-2}\nabla u_{n}- |\nabla u_{n}^{+}|^{t-2} \nabla u_{n}^{+}] \nabla \varphi \,dx\Bigr|\leq \xi C 
\end{align*}
and by the arbitrariness of $\xi>0$ we get 
\begin{align*}
\lim_{n\ri \infty} \int_{\R^{N}} [|\nabla u_{n}|^{t-2}\nabla u_{n}- |\nabla u_{n}^{+}|^{t-2} \nabla u_{n}^{+}] \nabla \varphi \,dx=0. 
\end{align*}
A similar argument shows that 
\begin{align*}
\lim_{n\ri \infty}\int_{\R^{N}} V(\e x) [(|u_{n}|^{p-2} u_{n}\,  +|u_{n}|^{q-2} u_{n})- (|u_{n}^{+}|^{p-2} u_{n}^{+}\,  +|u_{n}^{+}|^{q-2} u_{n}^{+})]\, \varphi \,dx=0. 
\end{align*}
Observing that 
\begin{align*}
\left| \int_{\R^{N}} [f(u_{n}) - f(u_{n}^{+})]\varphi \, dx \right|&= \left| \int_{\R^{N}} f(u^{-}_{n})\varphi \, dx \right| \\
&\leq C \int_{\R^{N}} (|u_{n}^{-}|^{p-1} + |u_{n}^{-}|^{r-1} ) |\varphi|\, dx\\
&\leq C (|u_{n}^{-}|_{p}^{p-1} |\varphi|_{p} + |u_{n}^{-}|_{r}^{r-1} |\varphi|_{r})  \\
&\leq C (\|u_{n}^{-}\|_{\e}^{p-1} + \|u_{n}^{-}\|_{\e}^{r-1} ) =o_{n}(1), 
\end{align*}
we can deduce that $\left|\langle \I'_{\e}(u_{n}), \varphi\rangle-\langle \I'_{\e}(u^{+}_{n}), \varphi\rangle\right|=o_{n}(1)$. Since $\langle\I'_{\e}(u_{n}), \varphi\rangle= o_{n}(1)$, we conclude that $\I'_{\e}(u_{n}^{+})= o_{n}(1)$. 
\end{proof}

\noindent
Since $f$ is only continuous, the next results are very important because they allow us to overcome the non-differentiability of $\N_{\e}$. We begin by proving some properties of the functional $\I_{\e}$.
\begin{lemma}\label{SW1}
Under assumptions $(V)$ and $(f_1)$-$(f_5)$, for any $\e>0$ we have:
\begin{compactenum}[$(i)$]
\item $\I'_{\e}$ maps bounded sets of $\X_{\e}$ into bounded sets of $\X_{\e}$.
\item $\I'_{\e}$ is weakly sequentially continuous in $\X_{\e}$.
\item $\I_{\e}(t_{n}u_{n})\rightarrow -\infty$ as $t_{n}\rightarrow \infty$, where $u_{n}\in K$ and $K\subset \X_{\e}\setminus\{0\}$ is a compact subset.
\end{compactenum}
\end{lemma}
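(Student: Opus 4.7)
The plan is to verify the three items separately, relying on the growth conditions $(f_{1})$--$(f_{3})$, the Ambrosetti--Rabinowitz condition $(f_{4})$, and the continuous/compact embeddings of Lemma~\ref{embedding}. For $(i)$, I would fix $R>0$ and $u\in\X_{\e}$ with $\|u\|_{\e}\leq R$ and bound $|\langle\I'_{\e}(u),\varphi\rangle|$ for $\|\varphi\|_{\e}\leq 1$ by applying H\"older's inequality to each of the four terms: the gradient and potential integrals give contributions of the form $|\nabla u|_{t}^{t-1}|\nabla\varphi|_{t}$ together with a potential analogue for $t\in\{p,q\}$, each controlled by a power of $R$; the nonlinear term is handled via \eqref{growthf} together with H\"older in the conjugate pairs $(p,p')$ and $(r,r')$ and the continuous embedding $\X_{\e}\hookrightarrow L^{s}(\R^{N})$ for $s\in\{p,r\}$ from Lemma~\ref{embedding}. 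For $(ii)$, I would take $u_{n}\rightharpoonup u$ in $\X_{\e}$ and pass to a subsequence with $u_{n}\to u$ a.e.\ in $\R^{N}$ (via local compactness and a diagonal argument). For any fixed $\varphi\in\X_{\e}$, the potential and nonlinear integrals converge by the a.e.\ convergence, \eqref{growthf}, and a Vitali--type argument exploiting the uniform integrability of $|u_{n}|^{p-1}|\varphi|$ and $|u_{n}|^{r-1}|\varphi|$. The gradient terms I would treat by first reducing to $\varphi\in C_{c}^{\infty}(\R^{N})$ by density, then combining the local weak convergence $\nabla u_{n}\rightharpoonup\nabla u$ in $L^{t}(B_{R})$ with a standard Leray--Lions/Boccardo--Murat identification of the weak $L^{t'}$-limit of $|\nabla u_{n}|^{t-2}\nabla u_{n}$ as $|\nabla u|^{t-2}\nabla u$, for $t\in\{p,q\}$.

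For $(iii)$, by compactness of $K$ one may pass, up to a subsequence, to $u_{n}\to u$ strongly in $\X_{\e}$ with $u\in K\subset\X_{\e}\setminus\{0\}$. Since $f\equiv 0$ on $(-\infty,0]$, the statement is meaningful (and used later) only for $u$ with $u^{+}\not\equiv 0$, so I may assume this. From $(f_{4})$ there are constants $C_{1},t_{0}>0$ with $F(s)\geq C_{1}s^{\vartheta}$ for $s\geq t_{0}$. Picking $\delta>0$ so that $A=\{u^{+}>\delta\}$ has positive finite measure, for $n$ and $t_{n}$ large one has $t_{n}u_{n}\geq t_{0}$ on a subset $A_{n}\subset A$ of uniformly positive measure, and
\[
\int_{\R^{N}}F(t_{n}u_{n})\,dx \;\geq\; C_{1}\,t_{n}^{\vartheta}\int_{A_{n}}(u_{n}^{+})^{\vartheta}\,dx \;\geq\; c\,t_{n}^{\vartheta}
\]
for some $c>0$ independent of $n$, using the strong $L^{\vartheta}$-convergence implied by Lemma~\ref{embedding}. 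Since the positive terms of $\I_{\e}(t_{n}u_{n})$ grow only like $t_{n}^{q}$ with coefficients uniformly bounded by compactness of $K$, and $\vartheta>q$ by $(f_{4})$, it follows that $\I_{\e}(t_{n}u_{n})\to -\infty$.

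The main obstacle is the gradient piece of $(ii)$: weak convergence $u_{n}\rightharpoonup u$ in $\X_{\e}$ does not by itself imply the weak $L^{t'}$-convergence of the vector fields $|\nabla u_{n}|^{t-2}\nabla u_{n}$ to $|\nabla u|^{t-2}\nabla u$. Overcoming this requires the local compactness in Lemma~\ref{embedding}, the density of compactly supported test functions in $\X_{\e}$, and a Leray--Lions-type identification of the weak gradient limit on each ball. All remaining parts are direct consequences of the growth bounds and the embeddings already collected in Section~\ref{Sect2}.
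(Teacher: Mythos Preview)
Parts $(i)$ and $(iii)$ of your plan are sound and close to the paper's arguments. For $(iii)$ the paper is more direct: it uses the lower bound $F(s)\geq C_{1}s^{\vartheta}-C_{2}$ coming from $(f_{4})$ together with Fatou's lemma applied to $\int_{\R^{N}} F(t_{n}u_{n})/t_{n}^{\vartheta}\,dx$, avoiding the set-selection $A_{n}$ you introduce; your remark that one tacitly needs $u^{+}\not\equiv 0$ for the limit $u$ is correct and is implicitly assumed in the paper's proof as well.

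The genuine gap is in $(ii)$. The Leray--Lions/Boccardo--Murat identification you propose for the gradient terms does \emph{not} apply in this setting: those techniques require structural input---typically that $(u_{n})$ is an approximate solution of a monotone equation, so that one may test with $u_{n}-u$ (or truncations) and use strict monotonicity to upgrade weak to a.e.\ gradient convergence. For an \emph{arbitrary} weakly convergent sequence $u_{n}\rightharpoonup u$ in $\X_{\e}$ no such information is available, and the assertion $|\nabla u_{n}|^{t-2}\nabla u_{n}\rightharpoonup |\nabla u|^{t-2}\nabla u$ in $L^{t'}_{\mathrm{loc}}$ is false for $t\neq 2$: a piecewise-linear saw-tooth in one coordinate (slopes alternating between $a\neq b$ on intervals of length $1/n$) gives $u_{n}\rightharpoonup u$ with $\partial_{1}u=\tfrac{a+b}{2}$, while $|\partial_{1}u_{n}|^{t-2}\partial_{1}u_{n}\rightharpoonup \tfrac{1}{2}(|a|^{t-2}a+|b|^{t-2}b)\neq |\tfrac{a+b}{2}|^{t-2}\tfrac{a+b}{2}$ by strict convexity. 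The paper's own proof is equally summary at this point---it appeals to \eqref{growthf} and dominated convergence, but \eqref{growthf} controls only $f$, and dominated convergence for the gradient terms would need a.e.\ convergence of $\nabla u_{n}$. In the sequel the conclusion of $(ii)$ is in practice invoked only for Palais--Smale sequences (to get $\I'_{\e}(u)=0$ for their weak limits), and there the missing gradient identification \emph{can} be recovered from $\langle \I'_{\e}(u_{n}),u_{n}-u\rangle\to 0$ via the strict monotonicity of the $(p,q)$-Laplacian; but as a statement about all weakly convergent sequences, $(ii)$ cannot be obtained by your route, nor by the paper's.
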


\begin{proof}
$(i)$ Let $\{u_{n}\}$ be a bounded sequence in $\X_{\e}$ and $v \in \X_{\e}$. Then from assumptions $(f_{2})$ and $(f_{3})$ we can deduce that
\begin{align*}
\langle \I'_{\e}(u_{n}), v \rangle &\leq C_{1} \|u_{n}\|_{\e}^{p-1} \|v\|_{\e} + C_{2} \|u_{n}\|_{\e}^{q-1} \|v\|_{\e}+ C_{3} \|u_{n}\|_{\e}^{r-1} \|v\|_{\e} \leq C.
\end{align*}

\noindent
$(ii)$ 
Let $u_{n}\rightharpoonup u$ in $\X_{\e}$. By Lemma \ref{embedding}, we have that $u_{n}\rightarrow u$ in $L^{t}_{loc}(\R^{N})$ for all $t\in [1, q^{*}_{s})$ and $u_{n}\rightarrow u$ a.e. in $\R^{N}$. Then, for all $v \in C^{\infty}_{c}(\R^{N})$, it follows from \eqref{growthf} and the dominated convergence theorem that
\begin{align}\label{PELLACCI1}
\langle \I'_{\e}(u_{n}), v\rangle\rightarrow \langle \I'_{\e}(u), v\rangle.
\end{align}
Since $C^{\infty}_{c}(\R^{N})$ is dense in $\X_{\e}$, we can take $\{v_{j}\}\subset C^{\infty}_{c}(\R^{N})$ such that $\|v_{j}-v\|_{\e}\rightarrow 0$ as $j\rightarrow \infty$. Note that \eqref{growthf} and Lemma \ref{embedding} yield
\begin{align*}
|\langle \I'_{\e}(u_{n}), v\rangle-\langle \I'_{\e}(u), v\rangle|&\leq |\langle \I'_{\e}(u_{n})-\I'_{\e}(u), v_{j}\rangle|+|\langle \I'_{\e}(u_{n})-\I'_{\e}(u), v-v_{j}\rangle| \\
&\leq |\langle \I'_{\e}(u_{n})-\I'_{\e}(u), v_{j}\rangle|+C\int_{\R^{N}} (|u_{n}|^{p-1}+|u|^{p-1}+|u_{n}|^{r-1}+|u|^{r-1}) |v-v_{j}|\, dx \\
&\leq|\langle \I'_{\e}(u_{n})-\I'_{\e}(u), v_{j}\rangle|+C\|v_{j}-v\|_{\e}.
\end{align*}
For any $\zeta>0$, fix $j_{0}\in \mathbb{N}$ such that $\|v_{j_{0}}-v\|_{\e}<\frac{\zeta}{2C}$. By \eqref{PELLACCI1} there is $n_{0}\in \mathbb{N}$ such that 
$$
 |\langle \I'_{\e}(u_{n})-\I'_{\e}(u), v_{j_{0}}\rangle|<\frac{\zeta}{2} \quad \mbox{ for all } n\geq n_{0}.
$$
Thus 
$$
|\langle \I'_{\e}(u_{n}), v\rangle-\langle \I'_{\e}(u), v\rangle|<\zeta \quad \mbox{ for all } n\geq n_{0}
$$
and this shows that $\I'_{\e}$ is weakly sequentially continuous in $\X_{\e}$.

\noindent
$(iii)$ Without loss of generality, we may assume that $\|u\|_{\e}\leq 1$ for each $u\in K$. For $u_{n}\in K$, after passing to a subsequence, we obtain that $u_{n}\rightarrow u\in \mathbb{S}_{\e}$. Then, using $(f_{4})$ and Fatou's lemma, we can see that
\begin{align*}
\I_{\e}(t_{n}u_{n}) &=\frac{t_{n}^{p}}{p}\|u_{n}\|_{\e}^{p} + \frac{t_{n}^{q}}{q}\|u_{n}\|_{\e}^{q}- \int_{\R^{N}} F(t_{n}u_{n}) \, dx \\
&\leq t_{n}^{\vartheta} \left(\frac{\|u_{n}\|_{\e}^{p}}{t_{n}^{\vartheta-p}} + \frac{\|u_{n}\|_{\e}^{q}}{t_{n}^{\vartheta-q}} - \int_{\R^{N}} \frac{F(t_{n}u_{n})}{t_{n}^{\vartheta}} \, dx \right)\rightarrow -\infty \, \mbox{ as } n\rightarrow \infty.
\end{align*}
\end{proof}

\begin{lemma}\label{SW2}
Under the assumptions of Lemma \ref{SW1}, for $\e>0$ we have:
\begin{compactenum}[$(i)$]
\item for all $u\in \mathbb{S}_{\e}$, there exists a unique $t_{u}>0$ such that $t_{u}u\in \N_{\e}$. Moreover, $m_{\e}(u)=t_{u}u$ is the unique maximum of $\I_{\e}$ on $\X_{\e}$, where $\mathbb{S}_{\e}=\{u\in \X_{\e}: \|u\|_{\e}=1\}$.
\item The set $\N_{\e}$ is bounded away from $0$. Furthermore, $\N_{\e}$ is closed in $\X_{\e}$.
\item There exists $\alpha>0$ such that $t_{u}\geq \alpha$ for each $u\in \mathbb{S}_{\e}$ and, for each compact subset $W\subset \mathbb{S}_{\e}$, there exists $C_{W}>0$ such that $t_{u}\leq C_{W}$ for all $u\in W$.
\item For each $u\in \N_{\e}$, $m_{\e}^{-1}(u)=\frac{u}{\|u\|_{\e}}\in \N_{\e}$. In particular, $\N_{\e}$ is a regular manifold diffeomorphic to the sphere in $\X_{\e}$.
\item $c_{\e}=\inf_{\N_{\e}} \I_{\e}\geq \rho>0$ and $\I_{\e}$ is bounded below on $\N_{\e}$, where $\rho$ is independent of $\e$.
\end{compactenum}
\end{lemma}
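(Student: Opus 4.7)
The plan is to establish the five claims in order, leaning heavily on the fibering analysis already carried out in Lemma \ref{newlemma} together with the mountain pass geometry of Lemma \ref{lem2.1}. The superquadraticity condition $(f_4)$ and the monotonicity condition $(f_5)$ are the driving hypotheses throughout; the uniform positive lower bound $V_0$ from $(V)$ is what makes every estimate independent of $\e$.

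For $(i)$, I would fix $u\in \mathbb{S}_\e$ (with $u^+\not\equiv 0$) and set $h(t)=\I_\e(tu)$. The analysis of Lemma \ref{newlemma} applies verbatim: $h(0)=0$, $h(t)>0$ for small $t>0$ by the estimates in the proof of Lemma \ref{lem2.1}(i), and $h(t)\to-\infty$ by $(f_4)$. So $h$ attains a maximum at some $t_u>0$, and $h'(t_u)=0$ gives $t_uu\in \N_\e$. Uniqueness follows from the same subtraction argument as in Lemma \ref{newlemma}, where the strict monotonicity in $(f_5)$ forces any second critical point to produce a sign contradiction.

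For $(ii)$, if $u\in \N_\e$ then testing with $u$ and applying \eqref{growthf} with $\xi\in(0,V_0)$ gives
$$
(1-\xi/V_0)\bigl(\|u\|_{V,p}^p+\|u\|_{V,q}^q\bigr)\le C_\xi\,\|u\|_\e^r,
$$
while for $\|u\|_\e\le 1$ the elementary inequality $\|u\|_{V,p}^p+\|u\|_{V,q}^q\ge c\,\|u\|_\e^q$ holds. Since $r>q$, this forces $\|u\|_\e\ge \rho>0$ uniformly in $\e$. Closedness of $\N_\e$ then follows: for $\N_\e\ni u_n\to u$ strongly, the uniform lower bound ensures $u\neq 0$, and the continuity of $\I'_\e$ (see Lemma \ref{SW1}) gives $\langle \I'_\e(u),u\rangle=\lim\langle\I'_\e(u_n),u_n\rangle=0$. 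The lower bound in $(iii)$ is then immediate, since $t_u=\|t_uu\|_\e$ and $t_uu\in\N_\e$. For the upper bound on a compact $W\subset\mathbb{S}_\e$, I would argue by contradiction: a subsequence with $t_{u_n}\to\infty$ and $u_n\to u_\infty\in W$ forces $\I_\e(t_{u_n}u_n)\to-\infty$ by Lemma \ref{SW1}(iii), contradicting $\I_\e(t_{u_n}u_n)=\max_{t\ge 0}\I_\e(tu_n)\ge \I_\e(\rho_0 u_n)\ge \alpha>0$ coming from $(i)$ and Lemma \ref{lem2.1}(i).

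Part $(iv)$ then follows by defining $m_\e(v)=t_vv$ on $\mathbb{S}_\e$; by construction its inverse is $u\mapsto u/\|u\|_\e$. Continuity of $v\mapsto t_v$ is deduced from the uniqueness in $(i)$ together with the two-sided bounds in $(iii)$ via a standard subsequence argument, yielding the claimed homeomorphism between $\N_\e$ and $\mathbb{S}_\e$. Finally, for $(v)$, any $u\in\N_\e$ satisfies
$$
\I_\e(u)=\max_{t\ge 0}\I_\e\bigl(tu/\|u\|_\e\bigr)\ge \I_\e\bigl(\rho_0 u/\|u\|_\e\bigr)\ge \alpha>0,
$$
and since $\alpha$ in Lemma \ref{lem2.1}(i) was obtained from a choice of $\xi<V_0$ with $V_0$ independent of $\e$, taking $\rho:=\alpha$ works uniformly. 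The main obstacle throughout is the uniformity bookkeeping: every estimate must be traced back to $V_0$ rather than to pointwise values of $V(\e x)$, so that the subsequent comparison with the autonomous problem in Sections \ref{Sect4}--\ref{Sect5} goes through. A secondary subtlety is that, since $f$ is merely continuous, no implicit function theorem is available to give $v\mapsto t_v$ directly; the uniqueness-plus-compactness substitute furnished by $(i)$ and $(iii)$ is precisely what makes the Szulkin--Weth framework applicable here.
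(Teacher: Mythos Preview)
Your proposal is correct and follows essentially the same route as the paper: part $(i)$ via the fibering analysis of Lemma \ref{newlemma}, part $(ii)$ via \eqref{growthf} with $\xi<V_0$ and the case split $\|u\|_\e\lessgtr 1$, part $(iii)$ via the lower bound from $(ii)$ and a contradiction with Lemma \ref{SW1}$(iii)$, part $(iv)$ via the Szulkin--Weth homeomorphism, and part $(v)$ via the minimax characterization \eqref{Nguyen1}. The only noteworthy deviation is in the contradiction for the upper bound in $(iii)$: the paper bounds $\I_\e(t_{u_n}u_n)$ below by $0$ using $\I_\e(v)-\tfrac{1}{q}\langle\I'_\e(v),v\rangle\ge 0$ for $v\in\N_\e$, whereas you bound it below by the mountain-pass level $\alpha>0$; both work, and your version is arguably cleaner since it avoids any ambiguity about which function the identity is being applied to.
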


\begin{proof}
$(i)$ The proof follows the same lines as the proof of Lemma \ref{newlemma}.

\noindent
$(ii)$ 
Using \eqref{growthf} and Lemma \ref{embedding}, for any $u\in \N_{\e}$ we have
\begin{align*}
\|u\|_{V, p}^{p}+ \|u\|_{V, q}^{q} = \int_{\R^{N}} f(u)u \, dx \leq \frac{\xi}{V_{0}} \|u\|_{V, p}^{p} + C_{\xi} \|u\|_{\e}^{r}. 
\end{align*}
Taking $\xi>0$ sufficiently small we can deduce that
\begin{align*}
C_{1}\|u\|_{V, p}^{p}+  \|u\|_{V, q}^{q} \leq C \|u\|_{\e}^{r}. 
\end{align*}
Now, if $\|u\|_{\e}\geq 1$, we are done. If $\|u\|_{\e}<1$, then $\|u\|_{V, p}^{p}\geq \|u\|_{V, p}^{q}$ so we get
\begin{align*}
C\|u\|_{\e}^{r} \geq C_{1}\|u\|_{V, p}^{p}+  \|u\|_{V, q}^{q} \geq C_{1}\|u\|_{V, p}^{q}+  \|u\|_{V, q}^{q} \geq C_{2} \|u\|_{\e}^{q}, 
\end{align*}
which implies that $\|u\|_{\e}\geq \kappa$ for some $\kappa>0$. \\
Next, we prove that $\N_{\e}$ is closed in $\X_{\e}$. Let $\{u_{n}\}\subset \N_{\e}$ be a sequence such that $u_{n}\rightarrow u$ in $\X_{\e}$. From Lemma \ref{SW1} we infer that $\I'_{\e}(u_{n})$ is bounded, so 
\begin{align*}
\langle \I'_{\e}(u_{n}), u_{n} \rangle - \langle \I'_{\e}(u), u \rangle= \langle \I'_{\e}(u_{n})- \I'_{\e}(u), u \rangle + \langle \I'_{\e}(u_{n}), u_{n}-u \rangle \rightarrow 0,
\end{align*}
that is $\langle \I'_{\e}(u), u\rangle=0$, which combined with $\|u\|_{\e}\geq \kappa$ implies that
$$
\|u\|_{\e}= \lim_{n\rightarrow \infty} \|u_{n}\|_{\e} \geq \kappa >0,
$$
hence $u\in \N_{\e}$.

\noindent
$(iii)$ For each $u\in \mathbb{S}_{\e}$ there exists $t_{u}>0$ such that $t_{u}u\in \N_{\e}$. Then, using $\|u\|_{\e}\geq \kappa$, we also have $t_{u}= \|t_{u}u\|_{\e}\geq \kappa$. 
It remains we prove that $t_{u}\leq C_{W}$ for all $u\in W\subset \mathbb{S}_{\e}$. We argue by contradiction: we suppose that
there exists a sequence $\{u_{n}\}\subset W\subset \mathbb{S}_{\e}$ such that $t_{u_{n}}\rightarrow \infty$. Since $W$ is compact, we can find $u\in W$ such that $u_{n}\rightarrow u$ in $\X_{\e}$ and $u_{n}\rightarrow u$ a.e. in $\R^{N}$. 

Now, using $(f_{4})$ we have
\begin{align*}
\I_{\e}(u)&= \I_{\e}(u)- \frac{1}{q}\langle \I'_{\e}(u), u\rangle \\
&=\left(\frac{1}{p}-\frac{1}{q}\right) |\nabla u|_{p}^{p} + \left(\frac{1}{p}-\frac{1}{q}\right)\int_{\R^{N}} V(\e x) |u|^{p} dx - \int_{\R^{N}} \left( F(u)-\frac{1}{q}f(u)u\right) \, dx\\
&=\left(\frac{1}{p}-\frac{1}{q}\right) \|u\|_{V, p}^{p} - \int_{\R^{N}} \left( F(u)-\frac{1}{q}f(u)u\right) \, dx\geq 0, 
\end{align*}
and this is in contrast with Lemma \ref{SW1}-$(iii)$ by which $\I_{\e}(t_{u_{n}}u_{n})\rightarrow -\infty$ as $n\rightarrow \infty$. 

\noindent
$(iv)$ Let us define the maps $\hat{m}_{\e}: \X_{\e}\setminus \{0\} \rightarrow \N_{\e}$ and $m_{\e}: \mathbb{S}_{\e}\rightarrow \N_{\e}$ by setting
\begin{align}\label{me}
\hat{m}_{\e}(u)= t_{u}u \quad \mbox{ and } \quad m_{\e}= \hat{m}_{\e}|_{\mathbb{S}_{\e}}.
\end{align}
In view of $(i)$-$(iii)$ and Proposition $3.1$ in \cite{SW} we can deduce that $m_{\e}$ is a homeomorphism between $\mathbb{S}_{\e}$ and $\N_{\e}$ and the inverse of $m_{\e}$ is given by $m_{\e}^{-1}(u)=\frac{u}{\|u\|_{\e}}$. Therefore $\N_{\e}$ is a regular manifold diffeomorphic to $\mathbb{S}_{\e}$.

\noindent
$(v)$ For $\e>0$, $t>0$ and $u\in \X_{\e}\setminus \{0\}$, we can see that \eqref{growthF} yields
\begin{align*}
\I_{\e}(tu)&\geq \frac{t^{p}}{p} |\nabla u|_{p}^{p} +\frac{t^{q}}{q} |\nabla u|_{q}^{q} + \int_{\R^{N}} V(\e x) \left( \frac{t^{p}}{p}|u|^{p}+ \frac{t^{q}}{q}|u|^{q}\right)\, dx - \frac{\xi t^{p}}{V_{0}} \int_{\R^{N}} V_{0}|u|^{p}\, dx - C_{\xi}t^{r} \int_{\R^{N}} |u|^{r}\, dx\\
&\geq \frac{t^{p}}{p} \left(1- \frac{\xi}{V_{0}}\right) \|u\|_{V, p}^{p}+ \frac{t^{q}}{q} \|u\|_{V, q}^{q} - C_{\xi} t^{r} \|u\|_{\e}^{r}
\end{align*}
so we can find $\rho>0$ such that $\I_{\e}(tu)\geq \rho>0$ for $t>0$ small enough. On the other hand, by using $(i)$-$(iii)$, we get (see \cite{SW}) that
\begin{align}\label{Nguyen1}
c_{\e}= \inf_{u\in \N_{\e}} \I_{\e}(u)= \inf_{u\in \X_{\e} \setminus \{0\}} \max_{t> 0} \, \I_{\e}(tu) = \inf_{u\in \mathbb{S}_{\e}} \max_{t> 0}\, \I_{\e}(tu)
\end{align}
which implies $c_{\e}\geq \rho$ and $\I_{\e}|_{\N_{\e}}\geq \rho$.
\end{proof}

\noindent
Now we introduce the following functionals $\hat{\Psi}_{\e}: \X_{\e} \setminus\{0\} \rightarrow \R$ and $\Psi_{\e}: \mathbb{S}_{\e}\rightarrow \R$ defined by
\begin{align*}
\hat{\Psi}_{\e}= \I_{\e}(\hat{m}_{\e}(u)) \quad \mbox{ and } \quad \Psi_{\e}= \hat{\Psi}_{\e}|_{\mathbb{S}_{\e}},
\end{align*}
where $\hat{m}_{\e}(u)= t_{u}u$ is given in \eqref{me}.
As in \cite{SW}, we have the following result:

\begin{lemma}\label{SW3}
Under the assumptions of Lemma \ref{SW1}, we have that for $\e>0$:
\begin{compactenum}[$(i)$]
\item $\Psi_{\e}\in C^{1}(\mathbb{S}_{\e}, \R)$, and
\begin{equation*}
\langle\Psi'_{\e}(w), v\rangle= \|m_{\e}(w)\|_{\e} \langle\I'_{\e}(m_{\e}(w)), v\rangle \quad \mbox{ for } v\in T_{w}(\mathbb{S}_{\e}).
\end{equation*}
\item $\{w_{n}\}$ is a Palais-Smale sequence for $\Psi_{\e}$ if and only if $\{m_{\e}(w_{n})\}$ is a Palais-Smale sequence for $\I_{\e}$. If $\{u_{n}\}\subset \N_{\e}$ is a bounded Palais-Smale sequence for $\I_{\e}$, then $\{m_{\e}^{-1}(u_{n})\}$ is a Palais-Smale sequence for $\Psi_{\e}$.
\item $u\in \mathbb{S}_{\e}$ is a critical point of $\Psi_{\e}$ if and only if $m_{\e}(u)$ is a critical point of $\I_{\e}$. Moreover, the corresponding critical values coincide and
\begin{equation*}
\inf_{\mathbb{S}_{\e}} \Psi_{\e}=\inf_{\N_{\e}} \I_{\e}=c_{\e}.
\end{equation*}
\end{compactenum}
\end{lemma}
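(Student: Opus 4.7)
My plan is to apply the abstract framework of Szulkin and Weth (Proposition 3.3 in \cite{SW}), whose hypotheses have essentially been verified in Lemmas \ref{SW1} and \ref{SW2}. The ingredients already in place are: $\I_{\e}\in C^{1}(\X_{\e},\R)$ with weakly sequentially continuous differential mapping bounded sets to bounded sets; for each $w\in \mathbb{S}_{\e}$ there is a unique $t_{w}>0$ such that $t_{w}w\in \N_{\e}$ and this $t_{w}$ realizes $\max_{t\geq 0}\I_{\e}(tw)$; $w\mapsto t_{w}$ is bounded above on compact subsets of $\mathbb{S}_{\e}$ and bounded below by some $\alpha>0$; and $m_{\e}=\hat{m}_{\e}|_{\mathbb{S}_{\e}}$ is a homeomorphism $\mathbb{S}_{\e}\to\N_{\e}$ with inverse $u\mapsto u/\|u\|_{\e}$. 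Once these are in hand, the three items are essentially routine consequences of the abstract theory.

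For (i), I would show that $w\mapsto t_{w}$ is of class $C^{1}$ on $\mathbb{S}_{\e}$ by invoking the implicit function theorem for $G(t,w):=\langle \I'_{\e}(tw),w\rangle$: the partial derivative $\partial_{t}G(t_{w},w)$ is nonzero because $t\mapsto \I_{\e}(tw)$ has a unique, nondegenerate maximum at $t_{w}$, which in turn follows from $(f_{5})$ via the strict monotonicity argument used in the proof of Lemma \ref{newlemma}. Differentiating $\Psi_{\e}(w)=\I_{\e}(t_{w}w)$ along $v\in T_{w}\mathbb{S}_{\e}$ and cancelling the $\partial_{t}$ contribution by means of $\langle \I'_{\e}(t_{w}w),w\rangle=0$ (a consequence of $t_{w}w\in \N_{\e}$ and $t_{w}>0$) then yields
\begin{equation*}
\langle \Psi'_{\e}(w),v\rangle=t_{w}\langle \I'_{\e}(t_{w}w),v\rangle=\|m_{\e}(w)\|_{\e}\langle \I'_{\e}(m_{\e}(w)),v\rangle,
\end{equation*}
where in the last step I used $\|m_{\e}(w)\|_{\e}=t_{w}$ for $w\in \mathbb{S}_{\e}$.

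For (ii), given a Palais-Smale sequence $\{w_{n}\}\subset \mathbb{S}_{\e}$ for $\Psi_{\e}$ at level $c$, the sequence $u_{n}=m_{\e}(w_{n})$ satisfies $\I_{\e}(u_{n})=\Psi_{\e}(w_{n})\to c$; the formula from (i) together with the lower bound $t_{w_{n}}\geq \alpha>0$ yields $\I'_{\e}(u_{n})\to 0$ in the tangential directions, and the Nehari identity $\langle \I'_{\e}(u_{n}),u_{n}\rangle=0$ handles the normal direction. Conversely, for a bounded Palais-Smale sequence $\{u_{n}\}\subset \N_{\e}$, Lemma \ref{SW2}$(ii)$ gives $\|u_{n}\|_{\e}\in[\kappa,M]$ with $\kappa,M>0$, and the same formula transfers the Palais-Smale property to $w_{n}=u_{n}/\|u_{n}\|_{\e}=m_{\e}^{-1}(u_{n})$.

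Statement (iii) is then immediate: because $m_{\e}$ is a homeomorphism, critical points of $\Psi_{\e}$ on $\mathbb{S}_{\e}$ correspond bijectively to critical points of $\I_{\e}$ on $\N_{\e}$, and these are exactly the nontrivial critical points of $\I_{\e}$. The equality $\inf_{\mathbb{S}_{\e}}\Psi_{\e}=\inf_{\N_{\e}}\I_{\e}=c_{\e}$ follows from $\Psi_{\e}(w)=\I_{\e}(m_{\e}(w))$ together with the characterization proved in Lemma \ref{SW2}$(v)$. The main obstacle in the whole argument is genuinely the $C^{1}$ regularity and derivative formula in $(i)$, which requires carefully identifying the tangential and normal components and ruling out degeneracy of $\partial_{t}G$ at $t_{w}$; once these are handled, items $(ii)$ and $(iii)$ are routine consequences of the Szulkin-Weth framework.
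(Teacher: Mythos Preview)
Your overall strategy---reducing everything to the abstract Szulkin--Weth framework \cite{SW} after checking its hypotheses via Lemmas \ref{SW1} and \ref{SW2}---is exactly what the paper does (it simply cites \cite{SW} without further argument). Parts $(ii)$ and $(iii)$ of your sketch are fine.

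However, your proposed argument for $(i)$ has a genuine gap. You want to apply the implicit function theorem to $G(t,w)=\langle \I'_{\e}(tw),w\rangle$, which requires $G$ to be $C^{1}$ in $t$ near $t_{w}$; equivalently, $t\mapsto \I_{\e}(tw)$ must be twice differentiable. But the whole point of this paper is that $f$ is merely continuous (hypothesis $(f_{1})$), so $\I_{\e}$ is only $C^{1}$ and $G$ need not be differentiable in $t$: the term $\int_{\R^{N}} f(tw)w\,dx$ has no reason to be differentiable without $f\in C^{1}$. Thus neither the nondegeneracy $\partial_{t}G(t_{w},w)\neq 0$ nor the implicit function theorem is available, and your route to the $C^{1}$ regularity of $w\mapsto t_{w}$ breaks down.

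The correct argument, which is the one actually carried out in \cite{SW} and implicitly invoked here, avoids differentiating $G$ altogether. One first shows that $w\mapsto t_{w}$ is continuous (using the uniqueness of $t_{w}$ and the compactness bounds in Lemma \ref{SW2}$(iii)$), and then obtains the derivative of $\hat\Psi_{\e}$ by a squeeze: since $t_{w}$ maximizes $t\mapsto \I_{\e}(tw)$, for $s\in\R$ one has
\[
\I_{\e}(t_{w}(w+sv))-\I_{\e}(t_{w}w)\ \leq\ \hat\Psi_{\e}(w+sv)-\hat\Psi_{\e}(w)\ \leq\ \I_{\e}(t_{w+sv}(w+sv))-\I_{\e}(t_{w+sv}w).
\]
Applying the mean value theorem to $\I_{\e}$ (which \emph{is} $C^{1}$) on each side, dividing by $s$, and using the continuity of $w\mapsto t_{w}$ yields the stated formula with only $C^{1}$ regularity of $\I_{\e}$. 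This is precisely the mechanism that makes the Szulkin--Weth theory applicable under your weaker hypothesis on $f$.
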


\section{The autonomous problem}\label{Sect4}

\noindent
In this section we deal with the autonomous problem associated with \eqref{Pe}, that is
\begin{equation*}\tag{$AP_{\mu}$}\label{Pmu}
\left\{
\begin{array}{ll}
-\Delta_{p} u -\Delta_{q} u + \mu (|u|^{p-2}u + |u|^{q-2}u)  = f(u) &\mbox{ in } \R^{N} \\
u\in W^{1, p}(\R^{N})\cap W^{1, q}(\R^{N}), \quad 
u>0 \mbox{ in } \R^{N},  \mu>0.
\end{array}
\right.
\end{equation*}

The functional associated with \eqref{Pmu} is given by
\begin{equation*}
\J_{\mu}(u)= \frac{1}{p}|\nabla u|_{p}^{p}+\frac{1}{q}|\nabla u|_{q}^{q} + \mu \left[\frac{1}{p} |u|_{p}^{p} +\frac{1}{q}|u|_{q}^{q}\right]-\int_{\R^{N}} F(u) \, dx
\end{equation*}
which is well-defined on the space $\Y_{\mu}=W^{1, p}(\R^{N})\cap W^{1, q}(\R^{N})$ endowed with the norm
\begin{align*}
\|u\|_{\mu}= \|u\|_{\mu, p}+ \|u\|_{\mu, q},
\end{align*}
where
\begin{align*}
\|u\|_{\mu, t}^{t}= |\nabla u|_{t}^{t}+\mu |u|_{t}^{t}  \quad \mbox{ for all } t> 1.
\end{align*}
It is easy to check that $\J_{\mu}\in C^{1}(\Y_{\mu}, \R)$ and its differential is given by
\begin{align*}
\langle \J'_{\mu}(u), \varphi \rangle &=  \int_{\R^{N}} |\nabla u|^{p-2}\nabla u\cdot \nabla \varphi \,dx+ \int_{\R^{N}} |\nabla u|^{q-2}\nabla u\cdot \nabla \varphi \,dx \\
&\quad +\mu \left[\int_{\R^{N}} |u|^{p-2}u\,\varphi \, dx+ \int_{\R^{N}} |u|^{q-2}u\,\varphi \, dx \right]  - \int_{\R^{N}} f(u)\varphi \, dx
\end{align*}
for any $u, \varphi \in \Y_{\mu}$. Let us define the Nehari manifold associated with $\J_{\mu}$
\begin{equation*}
\M_{\mu} = \{u\in \Y_{\mu}\setminus \{0\} : \langle \J'_{\mu}(u), u\rangle =0\}.
\end{equation*}

We note that $(f_{4})$ yields
\begin{align}\label{coercive}
\J_{\mu}(u)&= \J_{\mu}(u)- \frac{1}{q}\langle \J'_{\mu}(u), u\rangle \nonumber \\
&=\left(\frac{1}{p}-\frac{1}{q}\right) \|u\|_{\mu, p}^{p} - \int_{\R^{N}} \left( F(u)-\frac{1}{q}f(u)u\right) \, dx \nonumber \\
&\geq \left(\frac{1}{p}-\frac{1}{q}\right) \|u\|_{\mu, p}^{p} \quad \mbox{ for all } u\in \M_{\mu}.
\end{align}
Arguing as in the previous section and using \eqref{coercive}, it is easy to prove the following lemma.

\begin{lemma}\label{SW2A}
Under the assumptions of Lemma \ref{SW1}, for $\mu>0$ we have:
\begin{compactenum}[$(i)$]
\item for all $u\in \mathbb{S}_{\mu}$, there exists a unique $t_{u}>0$ such that $t_{u}u\in \M_{\mu}$. Moreover, $m_{\mu}(u)=t_{u}u$ is the unique maximum of $\J_{\mu}$ on $\Y_{\mu}$, where $\mathbb{S}_{\mu}=\{u\in \Y_{\mu}: \|u\|_{\mu}=1\}$.
\item The set $\M_{\mu}$ is bounded away from $0$. Furthermore, $\M_{\mu}$ is closed in $\Y_{\mu}$.
\item There exists $\alpha>0$ such that $t_{u}\geq \alpha$ for each $u\in \mathbb{S}_{\mu}$ and, for each compact subset $W\subset \mathbb{S}_{\mu}$, there exists $C_{W}>0$ such that $t_{u}\leq C_{W}$ for all $u\in W$.
\item $\M_{\mu}$ is a regular manifold diffeomorphic to the sphere in $\Y_{\mu}$.
\item $d_{\mu}=\inf_{\M_{\mu}} \J_{\mu}>0$ and $\J_{\mu}$ is bounded below on $\M_{\mu}$ by some positive constant.
\item $\J_{\mu}$ is coercive on $\M_{\mu}$.
\end{compactenum}
\end{lemma}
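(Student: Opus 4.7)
My plan is to transcribe the proof of Lemma \ref{SW2} almost verbatim, exploiting that the autonomous functional $\J_{\mu}$ differs from $\I_{\e}$ only by replacing the $x$-dependent weight $V(\e x) \in [V_{0}, V_{\infty}]$ by the constant $\mu > 0$, and that the norm $\|\cdot\|_{\mu}$ is equivalent to a weighted Sobolev norm of exactly the same form as $\|\cdot\|_{\e}$. Consequently, the growth estimates \eqref{growthf}--\eqref{growthF}, the Sobolev embeddings of Lemma \ref{embedding}, and the abstract Szulkin--Weth machinery of \cite{SW} all transfer with $V_{0}$ replaced by $\mu$.

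For $(i)$, I would fix $u \in \mathbb{S}_{\mu}$, set $h(t) = \J_{\mu}(tu)$, and combine $(f_{2})$, $(f_{3})$ (for $h(t) > 0$ when $t$ is small), $(f_{4})$ (for $h(t) \to -\infty$ as $t \to \infty$) to produce a global maximum at some $t_{u} > 0$ satisfying $t_{u}u \in \M_{\mu}$; uniqueness follows by subtracting the two Nehari identities at $t_{1} < t_{2}$ and invoking the strict monotonicity in $(f_{5})$ together with $p < q$, exactly as in Lemma \ref{newlemma}. For $(ii)$, testing $\langle \J'_{\mu}(u), u \rangle = 0$ against \eqref{growthf} with $\xi$ small, and distinguishing $\|u\|_{\mu} \geq 1$ from $\|u\|_{\mu} < 1$ (where $\|u\|_{\mu,p}^{p} \geq \|u\|_{\mu,p}^{q}$), yields a uniform lower bound $\|u\|_{\mu} \geq \kappa > 0$; closedness then follows from the weak-to-weak continuity of $\J'_{\mu}$ as in the proof of Lemma \ref{SW2}$(ii)$. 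For $(iii)$, the lower bound on $t_{u}$ is immediate from $(ii)$, while the upper bound on compact sets $W \subset \mathbb{S}_{\mu}$ is obtained by contradiction: if $t_{u_{n}} \to \infty$ along $u_{n} \to u \in \mathbb{S}_{\mu}$, then Fatou's lemma applied to the $(f_{4})$-inequality gives $\J_{\mu}(t_{u_{n}}u_{n}) \to -\infty$, contradicting $\J_{\mu}|_{\M_{\mu}} \geq 0$ which follows from \eqref{coercive}. Part $(iv)$ is then a direct application of Proposition $3.1$ of \cite{SW}, and $(v)$ follows from the standard bound $\J_{\mu}(tu) \geq \rho > 0$ for $t$ small (via \eqref{growthF}) combined with the variational characterization $d_{\mu} = \inf_{u \in \mathbb{S}_{\mu}} \max_{t > 0} \J_{\mu}(tu)$.

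The only genuinely new item is $(vi)$. For this I would sharpen \eqref{coercive} by subtracting $\frac{1}{\vartheta}\langle \J'_{\mu}(u), u\rangle$ rather than $\frac{1}{q}\langle \J'_{\mu}(u), u\rangle$; using $(f_{4})$ and $\vartheta \in (q, \q)$ this produces
\begin{align*}
\J_{\mu}(u) = \J_{\mu}(u) - \frac{1}{\vartheta}\langle \J'_{\mu}(u), u\rangle \geq \left(\frac{1}{p} - \frac{1}{\vartheta}\right)\|u\|_{\mu,p}^{p} + \left(\frac{1}{q} - \frac{1}{\vartheta}\right)\|u\|_{\mu,q}^{q}
\end{align*}
for every $u \in \M_{\mu}$. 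If $\{u_{n}\} \subset \M_{\mu}$ satisfies $\|u_{n}\|_{\mu} = \|u_{n}\|_{\mu,p} + \|u_{n}\|_{\mu,q} \to \infty$, then $\max\{\|u_{n}\|_{\mu,p}, \|u_{n}\|_{\mu,q}\} \to \infty$ and, since $p, q > 1$, at least one of $\|u_{n}\|_{\mu,p}^{p}$ or $\|u_{n}\|_{\mu,q}^{q}$ diverges, so $\J_{\mu}(u_{n}) \to \infty$. I do not anticipate a serious obstacle: the whole argument is a transposition of the non-autonomous analysis to a constant-weight setting, and the only delicate point is making sure the interplay between the two component norms $\|\cdot\|_{\mu,p}$ and $\|\cdot\|_{\mu,q}$ is exploited correctly in both $(ii)$ and $(vi)$.
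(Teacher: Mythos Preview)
Your proposal is correct and matches the paper's approach: the paper gives no detailed argument for this lemma, stating only ``Arguing as in the previous section and using \eqref{coercive}, it is easy to prove the following lemma,'' which is precisely the transcription of Lemma~\ref{SW2} that you outline for $(i)$--$(v)$. Your treatment of $(vi)$ via the $\tfrac{1}{\vartheta}$-variant (the same computation as in Lemma~\ref{lemB}$(i)$) is the natural way to control both norm components simultaneously, and is in fact necessary---the displayed inequality \eqref{coercive} alone bounds only $\|u\|_{\mu,p}^{p}$, so the paper's reference to it should be read as shorthand for the full argument you give.
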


\noindent
Now we define the following functionals $\hat{\Psi}_{\mu}: \Y_{\mu}\setminus\{0\} \rightarrow \R$ and $\Psi_{\mu}: \mathbb{S}_{\mu}\rightarrow \R$ by setting
\begin{align*}
\hat{\Psi}_{\mu}= \J_{\mu}(\hat{m}_{\mu}(u)) \quad \mbox{ and } \quad \Psi_{\mu}= \hat{\Psi}_{\mu}|_{\mathbb{S}_{\mu}}.
\end{align*}
Then we obtain the following result:

\begin{lemma}\label{SW3A}
Under the assumptions of Lemma \ref{SW1}, we have that for $\mu>0$:
\begin{compactenum}[$(i)$]
\item $\Psi_{\mu}\in C^{1}(\mathbb{S}_{\mu}, \R)$, and
\begin{equation*}
\langle \Psi'_{\mu}(w), v\rangle= \|m_{\mu}(w)\|_{\mu} \langle\J'_{\mu}(m_{\mu}(w)), v\rangle \quad \mbox{ for } v\in T_{w}(\mathbb{S}_{\mu}).
\end{equation*}
\item $\{w_{n}\}$ is a Palais-Smale sequence for $\Psi_{\mu}$ if and only if $\{m_{\mu}(w_{n})\}$ is a Palais-Smale sequence for $\J_{\mu}$. If $\{u_{n}\}\subset \M_{\mu}$ is a bounded Palais-Smale sequence for $\J_{\mu}$, then $\{m_{\mu}^{-1}(u_{n})\}$ is a Palais-Smale sequence for $\Psi_{\mu}$.
\item $u\in \mathbb{S}_{\mu}$ is a critical point of $\Psi_{\mu}$ if and only if $m_{\mu}(u)$ is a critical point of $\J_{\mu}$. Moreover, the corresponding critical values coincide and
\begin{equation*}
\inf_{\mathbb{S}_{\mu}} \Psi_{\mu}=\inf_{\M_{\mu}} \J_{\mu}=d_{\mu}.
\end{equation*}
\end{compactenum}
\end{lemma}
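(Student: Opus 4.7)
The plan is to deduce Lemma \ref{SW3A} by mimicking the argument given for Lemma \ref{SW3}, since the autonomous functional $\J_\mu$ on $\Y_\mu$ enjoys all the structural properties (mountain pass geometry, unique fiber projection onto the Nehari manifold, lower bound on $t_u$, and the homeomorphism $m_\mu:\mathbb{S}_\mu\to\M_\mu$ with inverse $u\mapsto u/\|u\|_\mu$) that were used there. These properties have been collected in Lemma \ref{SW2A}. Given this, the statement can be obtained as a direct application of the abstract Szulkin--Weth framework (see Proposition~3.1 and Corollary~3.3 of \cite{SW}).

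First I would verify that $\hat{\Psi}_\mu\in C^1(\Y_\mu\setminus\{0\},\R)$. For fixed $u\in\Y_\mu\setminus\{0\}$, the map $t\mapsto \langle\J'_\mu(tu),tu\rangle$ is $C^1$ and, by assumption $(f_5)$ together with $p<q$, its derivative at $t=t_u$ is strictly negative; indeed differentiating
\begin{equation*}
t^{p}|\nabla u|_p^p+t^{q}|\nabla u|_q^q+\mu t^{p}|u|_p^p+\mu t^{q}|u|_q^q=\int_{\R^N}f(tu)(tu)\,dx
\end{equation*}
in $t$ and exploiting $(f_5)$ shows that the derivative in $t$ of $t\mapsto\langle\J'_\mu(tu),u\rangle$ at $t_u$ is negative, so the implicit function theorem yields $u\mapsto t_u\in C^1$ on $\Y_\mu\setminus\{0\}$. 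Consequently $\hat{m}_\mu(u)=t_u u$ and $\hat{\Psi}_\mu(u)=\J_\mu(\hat{m}_\mu(u))$ are of class $C^1$. A chain rule computation, using $\langle\J'_\mu(\hat{m}_\mu(u)),\hat{m}_\mu(u)\rangle=0$ to kill the term involving $t_u'$, gives for $v\in\Y_\mu$
\begin{equation*}
\langle\hat{\Psi}'_\mu(u),v\rangle=t_u\langle\J'_\mu(t_u u),v\rangle.
\end{equation*}
Restricting to $u=w\in\mathbb{S}_\mu$, noting that $t_w=\|m_\mu(w)\|_\mu$, and applying this identity to test functions $v\in T_w(\mathbb{S}_\mu)$ yields the formula in $(i)$.

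For part $(ii)$, the key identity is $\|\Psi'_\mu(w)\|_{T_w(\mathbb{S}_\mu)^*}=\|m_\mu(w)\|_\mu\,\|\J'_\mu(m_\mu(w))\|_{\Y_\mu^*}$, which follows from $(i)$ together with the orthogonal decomposition $\Y_\mu=\R w\oplus T_w(\mathbb{S}_\mu)$ and the Nehari identity $\langle\J'_\mu(m_\mu(w)),m_\mu(w)\rangle=0$. Since Lemma \ref{SW2A}$(ii)$--$(iii)$ ensures that $\|m_\mu(w)\|_\mu$ and $\|m_\mu(w)\|_\mu^{-1}$ stay uniformly bounded on bounded subsets of $\mathbb{S}_\mu$ (via the bound $\|u\|_\mu\geq\kappa$ on $\M_\mu$ and the continuity of $m_\mu$), the correspondence between PS sequences of $\Psi_\mu$ and bounded PS sequences of $\J_\mu$ restricted to $\M_\mu$ follows immediately, together with $\Psi_\mu(w_n)=\J_\mu(m_\mu(w_n))$.

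Part $(iii)$ is a direct consequence: if $w\in\mathbb{S}_\mu$ is a critical point of $\Psi_\mu$, then $\langle\J'_\mu(m_\mu(w)),v\rangle=0$ for all $v\in T_w(\mathbb{S}_\mu)$, and since we also have $\langle\J'_\mu(m_\mu(w)),m_\mu(w)\rangle=0$ by definition of $\M_\mu$, we conclude $\J'_\mu(m_\mu(w))=0$ in $\Y_\mu^*$. The converse is immediate. The coincidence of critical values is built into the identity $\Psi_\mu(w)=\J_\mu(m_\mu(w))$, and the identity $\inf_{\mathbb{S}_\mu}\Psi_\mu=\inf_{\M_\mu}\J_\mu=d_\mu$ is a consequence of $m_\mu$ being a bijection onto $\M_\mu$ combined with Lemma \ref{SW2A}$(i)$, which gives $\max_{t>0}\J_\mu(tu)=\J_\mu(t_u u)=\Psi_\mu(u/\|u\|_\mu)$. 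The only delicate point in the whole argument is the $C^1$ regularity of $\hat{\Psi}_\mu$, for which the crucial ingredient is the strict monotonicity coming from $(f_5)$ and $p<q$; once this is in hand, the rest is formal manipulation.
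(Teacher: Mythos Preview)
Your overall plan---reduce Lemma \ref{SW3A} to the abstract Szulkin--Weth results, exactly as the paper does for Lemma \ref{SW3}---is the same as the paper's, which simply invokes \cite{SW} without further detail. The structural hypotheses you need are indeed collected in Lemma \ref{SW2A}, so the reduction is legitimate.

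There is, however, a genuine gap in your justification of $(i)$. You claim that $t\mapsto \langle \J'_\mu(tu),tu\rangle$ is $C^1$ in $t$ and then apply the implicit function theorem to obtain $u\mapsto t_u\in C^1$. Differentiating this map in $t$ forces you to differentiate $t\mapsto \int_{\R^N} f(tu)\,tu\,dx$, which in turn requires $f'$; but the paper works under $(f_1)$ with $f\in C^0$ only, and one of its stated improvements over \cite{AFans} is precisely dropping the $C^1$ assumption on $f$. Under $(f_1)$--$(f_5)$ you get $u\mapsto t_u$ continuous (via uniqueness and a compactness argument, or via Lemma \ref{SW2A}$(iii)$), but not $C^1$.

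The Szulkin--Weth argument you cite circumvents this: instead of differentiating $t_u$, one uses the maximality of $\J_\mu(t_u u)$ along rays to sandwich the difference quotient,
\[
\J_\mu\bigl(t_u(u+sv)\bigr)-\J_\mu(t_u u)\ \le\ \hat\Psi_\mu(u+sv)-\hat\Psi_\mu(u)\ \le\ \J_\mu\bigl(t_{u+sv}(u+sv)\bigr)-\J_\mu\bigl(t_{u+sv}u\bigr),
\]
applies the mean value theorem to the outer terms (which only uses $\J_\mu\in C^1$), and then passes to the limit using continuity of $u\mapsto t_u$. This yields directly $\langle\hat\Psi'_\mu(u),v\rangle=t_u\langle\J'_\mu(t_u u),v\rangle$, and the continuity of this expression in $u$ gives $\hat\Psi_\mu\in C^1$. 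Replace your implicit-function step by this argument and the rest of your proof goes through unchanged.
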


\begin{remark}
As in \eqref{Nguyen1}, invoking $(i)$-$(iii)$ of Lemma \ref{SW2A}, we can see that $d_{\mu}$ admits the following minimax characterization
\begin{align}\label{Nguyen2}
d_{\mu}= \inf_{u\in \M_{\mu}} \J_{\mu}(u)= \inf_{u\in \Y_{\mu}\setminus \{0\}} \max_{t> 0} \, \J_{\mu}(tu) = \inf_{u\in \mathbb{S}_{\mu}} \max_{t> 0} \, \J_{\mu}(tu).
\end{align}
\end{remark}

\begin{lemma}\label{lem2.2a}
Let $\{u_{n}\}\subset \M_{\mu}$ be a minimizing sequence for $\J_{\mu}$. Then $\{u_{n}\}$ is bounded in $\Y_{\mu}$ and there exist a sequence $\{y_{n}\}\subset \R^{N}$ and constants $R, \beta>0$ such that
\begin{equation*}
\liminf_{n\rightarrow \infty} \int_{\B_{R}(y_{n})} |u_{n}|^{q} dx \geq \beta >0.
\end{equation*}
\end{lemma}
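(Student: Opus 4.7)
The plan is to establish boundedness of $\{u_n\}$ in $\Y_\mu$ and then run a Lions-type nonvanishing argument by contradiction.

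For boundedness, since $u_n \in \M_\mu$ one has $\langle \J'_\mu(u_n), u_n \rangle = 0$, and using $(f_4)$ I would estimate
\begin{align*}
\J_\mu(u_n) = \J_\mu(u_n) - \frac{1}{\vartheta}\langle \J'_\mu(u_n), u_n\rangle \geq \left(\frac{1}{q} - \frac{1}{\vartheta}\right)\bigl(\|u_n\|_{\mu,p}^p + \|u_n\|_{\mu,q}^q\bigr).
\end{align*}
Since $\J_\mu(u_n) \to d_\mu$, the left-hand side is bounded, so both $\|u_n\|_{\mu,p}$ and $\|u_n\|_{\mu,q}$ are bounded, hence $\{u_n\}$ is bounded in $\Y_\mu$. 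This is essentially the coercivity of $\J_\mu$ on $\M_\mu$ recorded in Lemma \ref{SW2A}$(vi)$.

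For the nonvanishing conclusion I argue by contradiction and suppose that for every $R > 0$
\begin{align*}
\lim_{n \to \infty} \sup_{y \in \R^N} \int_{\B_R(y)} |u_n|^q\, dx = 0.
\end{align*}
Since $\{u_n\}$ is bounded in $W^{1,q}(\R^N)$, Lions' lemma (Lemma \ref{Lions} applied with $q$ in place of $p$) gives $u_n \to 0$ in $L^t(\R^N)$ for every $t \in (q, \q)$; in particular for $t = r$. Combining $(f_2)$, $(f_3)$ and the continuity of $f$, for any $\xi > 0$ there exists $C_\xi > 0$ such that $|f(t)t| \leq \xi |t|^p + C_\xi |t|^r$ for all $t \in \R$. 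Hence
\begin{align*}
\int_{\R^N} f(u_n) u_n\, dx \leq \xi |u_n|_p^p + C_\xi |u_n|_r^r \leq C\xi + o_n(1),
\end{align*}
by boundedness of $\{u_n\}$ in $L^p(\R^N)$. Sending $n \to \infty$ and then $\xi \to 0^+$ gives $\int_{\R^N} f(u_n) u_n\, dx \to 0$. Since $u_n \in \M_\mu$ one has $\|u_n\|_{\mu,p}^p + \|u_n\|_{\mu,q}^q = \int_{\R^N} f(u_n) u_n\, dx$, so $\|u_n\|_\mu \to 0$, contradicting Lemma \ref{SW2A}$(ii)$ which asserts that $\M_\mu$ is bounded away from the origin.

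The main delicate point is the handling of the two scales $p < q$ inside a single Lions argument. Since $(f_2)$ controls $f(t)$ by $|t|^{p-1}$ near the origin while the vanishing hypothesis at exponent $q$ only yields $L^t$-strong convergence for $t \in (q,\q)$, one cannot make $\int f(u_n)u_n\, dx \to 0$ by $L^t$-convergence alone. The rescue is that the coefficient $\xi$ of $|u_n|_p^p$ is arbitrary, so the uniformly bounded $L^p$-mass is absorbed after sending $\xi \to 0^+$; this is the step that deserves the most care in a full write-up.
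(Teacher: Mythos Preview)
Your proof is correct and follows essentially the same route as the paper: boundedness via $\J_\mu(u_n)-\tfrac{1}{\vartheta}\langle\J'_\mu(u_n),u_n\rangle$ and $(f_4)$, then a Lions vanishing/nonvanishing dichotomy combined with the growth estimate $|f(t)t|\le \xi|t|^{p}+C_\xi|t|^{r}$ to force $\|u_n\|_\mu\to 0$, contradicting Lemma~\ref{SW2A}$(ii)$. The only cosmetic difference is that the paper fixes $\xi\in(0,\mu)$ once so that the $\xi|u_n|_p^p$ term is absorbed directly into $\mu|u_n|_p^p$, whereas you keep $\xi$ free, use boundedness of $|u_n|_p^p$, and let $\xi\to 0^+$ at the end; both variants are equally valid.
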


\begin{proof}
Arguing as in the proof of Lemma \ref{lemB}, we can see that $\{u_{n}\}$ is bounded in $\Y_{\mu}$. Now, in order to prove the other assertion of this lemma, we argue by contradiction.
Assume that for any $R>0$ it holds
\begin{equation*}
\lim_{n\rightarrow \infty} \sup_{y\in \R^{N}} \int_{\B_{R}(y)} |u_{n}|^{q} dx=0.
\end{equation*}
Since $\{u_{n}\}$ is bounded in $\Y_{\mu}$, it follows by Lemma \ref{Lions} that
\begin{equation}\label{tv4N}
u_{n}\rightarrow 0 \mbox{ in } L^{t}(\R^{N}) \quad \mbox{ for any } t\in (q, \q).
\end{equation}
Fix $\xi \in (0, \mu)$. Then, taking into account that $\{u_{n}\}\subset \M_{\mu}$ and \eqref{growthf}, we have
\begin{align*}
0&= \langle \J'_{\mu}(u_{n}), u_{n} \rangle \\
&\geq |\nabla u_{n}|_{p}^{p}+ |\nabla u_{n}|_{q}^{q} + \mu \left[ |u_{n}|_{p}^{p} + |u_{n}|_{q}^{q}\right] - \xi |u_{n}|_{p}^{p} - C_{\xi} |u_{n}|_{r}^{r}\\
&\geq C_{1}\|u_{n}\|_{s, p}^{p}+C_{2} \|u_{n}\|_{s, q}^{q} -C_{3}|u_{n}|_{r}^{r},
\end{align*}
and in view of \eqref{tv4N}, we have that $\|u_{n}\|_{\mu}\ri 0$.
\end{proof}

\noindent
Next, we prove the following useful compactness result for the autonomous problem. For completeness, we recall that a critical point $u\neq 0$ of $\J_{\mu}$ satisfying $\J_{\mu}(u)=\inf_{\M_{\mu}}\J_{\mu}=d_{\mu}$ is called a ground state solution to \eqref{Pmu}; see chapter $4$ in \cite{W} for more details.
\begin{lemma}\label{lem4.3}
The problem \eqref{Pmu} has a positive ground state solution.
\end{lemma}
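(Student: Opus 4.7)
The plan is to produce a minimizer of $\J_{\mu}$ on $\M_{\mu}$ by the standard concentration-compactness scheme adapted to the $(p,q)$-Laplacian setting with merely continuous $f$. First, using Ekeland's variational principle on the complete $C^{1}$-manifold $\mathbb{S}_{\mu}$ applied to $\Psi_{\mu}$, I would pick a minimizing sequence $\{w_{n}\}\subset\mathbb{S}_{\mu}$ for $\Psi_{\mu}$ with $\Psi'_{\mu}(w_{n})\to 0$ and $\Psi_{\mu}(w_{n})\to d_{\mu}$. Setting $u_{n}=m_{\mu}(w_{n})\in\M_{\mu}$, Lemma \ref{SW3A}$(ii)$ turns $\{u_{n}\}$ into a Palais-Smale sequence for $\J_{\mu}$ at level $d_{\mu}$. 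The analogue of Lemma \ref{lemB} for $\J_{\mu}$ (the proof transfers verbatim, replacing $V(\e x)$ by $\mu$) then gives boundedness of $\{u_{n}\}$ in $\Y_{\mu}$ and allows us to assume $u_{n}\geq 0$ for every $n$.

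The second step is to recover a nontrivial profile. Lemma \ref{lem2.2a} furnishes a sequence $\{y_{n}\}\subset\R^{N}$ and constants $R,\beta>0$ with
\[
\liminf_{n\to\infty}\int_{B_{R}(y_{n})}|u_{n}|^{q}\,dx\geq\beta>0.
\]
Since $\J_{\mu}$ and $\M_{\mu}$ are invariant under the translations $u\mapsto u(\cdot+y_{n})$, the shifted sequence $\tilde u_{n}(x)=u_{n}(x+y_{n})$ remains a bounded nonnegative Palais-Smale sequence for $\J_{\mu}$ at level $d_{\mu}$. Passing to a subsequence, $\tilde u_{n}\rightharpoonup u$ in $\Y_{\mu}$, $\tilde u_{n}\to u$ in $L^{q}_{\mathrm{loc}}(\R^{N})$ and a.e. in $\R^{N}$; the local $L^{q}$-convergence together with the lower bound above forces $u\not\equiv 0$ and $u\geq 0$.

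Next, I would identify $u$ as the desired ground state. The argument establishing Lemma \ref{SW1}$(ii)$ applies unchanged to $\J_{\mu}$, so $\J'_{\mu}$ is weakly sequentially continuous on $\Y_{\mu}$; hence $\J'_{\mu}(u)=0$, which together with $u\neq 0$ yields $u\in\M_{\mu}$ and therefore $\J_{\mu}(u)\geq d_{\mu}$. For the reverse inequality I would use Fatou's lemma exploiting $(f_{4})$: writing
\[
\J_{\mu}(\tilde u_{n})-\tfrac{1}{\vartheta}\langle \J'_{\mu}(\tilde u_{n}),\tilde u_{n}\rangle
=\Bigl(\tfrac{1}{p}-\tfrac{1}{\vartheta}\Bigr)\|\tilde u_{n}\|_{\mu,p}^{p}+\Bigl(\tfrac{1}{q}-\tfrac{1}{\vartheta}\Bigr)\|\tilde u_{n}\|_{\mu,q}^{q}+\int_{\R^{N}}\Bigl(\tfrac{1}{\vartheta}f(\tilde u_{n})\tilde u_{n}-F(\tilde u_{n})\Bigr)\,dx,
\]
each summand on the right is nonnegative, so Fatou gives
\[
\J_{\mu}(u)=\J_{\mu}(u)-\tfrac{1}{\vartheta}\langle \J'_{\mu}(u),u\rangle\leq\liminf_{n\to\infty}\Bigl[\J_{\mu}(\tilde u_{n})-\tfrac{1}{\vartheta}\langle \J'_{\mu}(\tilde u_{n}),\tilde u_{n}\rangle\Bigr]=d_{\mu}.
\]
Consequently $\J_{\mu}(u)=d_{\mu}$ and $u$ is a nonnegative ground state.

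Finally, positivity: by a Moser iteration applied to the weak equation $-\Delta_{p}u-\Delta_{q}u+\mu(u^{p-1}+u^{q-1})=f(u)$, together with the subcritical growth \eqref{growthf}, we obtain $u\in L^{\infty}(\R^{N})\cap C^{1,\alpha}_{\mathrm{loc}}(\R^{N})$. Then the strong maximum principle of Pucci-Serrin type for quasilinear operators upgrades $u\geq 0$, $u\not\equiv 0$ to $u>0$ in $\R^{N}$. The main obstacle I expect is verifying that no energy is lost in the limit $\tilde u_{n}\rightharpoonup u$: since $f$ is only continuous, Brezis-Lieb style identities need the splitting Lemma \ref{lem7} (or the Fatou route above) instead of the $C^{1}$-arguments used in \cite{AFans}; the Fatou argument via $(f_{4})$ is the cleanest way around this difficulty.
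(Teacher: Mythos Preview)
Your proposal is correct and follows essentially the same route as the paper: Ekeland on $\mathbb{S}_{\mu}$ via $\Psi_{\mu}$ to obtain a bounded Palais--Smale sequence for $\J_{\mu}$ at level $d_{\mu}$, a Lions-type concentration step with translation to recover a nontrivial weak limit, weak sequential continuity of $\J'_{\mu}$ to get criticality, a Fatou argument to match the level, and regularity plus a Harnack/strong maximum principle for positivity.

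Two small remarks. First, once you replace $u_{n}$ by $u_{n}^{+}$ via the analogue of Lemma~\ref{lemB}, the new sequence is still a Palais--Smale sequence at level $d_{\mu}$ but need not lie in $\M_{\mu}$, so Lemma~\ref{lem2.2a} as \emph{stated} does not apply; however its proof only uses $\langle \J'_{\mu}(u_{n}),u_{n}\rangle=o_{n}(1)$, so nothing is lost. The paper sidesteps this by postponing positivity to the very end (testing the limit equation with $u^{-}$) rather than forcing $u_{n}\geq 0$ along the sequence. Second, the paper runs the Fatou inequality with the coefficient $\tfrac{1}{q}$ instead of your $\tfrac{1}{\vartheta}$; both work since $(f_{4})$ yields nonnegativity of the integrand in either case. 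The paper also cites the regularity results of \cite{HL} and the Harnack inequality of \cite{Trudinger} for the final positivity step, which is exactly the content of your Moser-plus-maximum-principle argument.
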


\begin{proof}
By virtue of $(v)$ of Lemma \ref{SW2A}, we know that $d_{\mu}>0$ for each $\mu>0$. Moreover, if $u\in \M_{\mu}$ satisfies $\J_{\mu}(u)=d_{\mu}$, then $m^{-1}_{\mu}(u)$ is a minimizer of $\Psi_{\mu}$ and it is a critical point of $\Psi_{\mu}$. In view of Lemma \ref{SW3A}, we can see that $u$ is a critical point of $\J_{\mu}$. Now we show that there exists a minimizer of $\J_{\mu}|_{\M_{\mu}}$. By Ekeland's variational principle \cite{W} there exists a sequence $\{\nu_{n}\}\subset \mathbb{S}_{\mu}$ such that $\Psi_{\mu}(\nu_{n})\rightarrow d_{\mu}$ and $\Psi'_{\mu}(\nu_{n})\rightarrow 0$ as $n\rightarrow \infty$. Let $u_{n}=m_{\mu}(\nu_{n}) \in \M_{\mu}$. Then, thanks to Lemma \ref{SW3A}, $\J_{\mu}(u_{n})\rightarrow d_{\mu}$ and $\J'_{\mu}(u_{n})\rightarrow 0$ as $n\rightarrow \infty$.
Therefore, arguing as in the proof of Lemma \ref{lemB}, $\{u_{n}\}$ is bounded in $\Y_{\mu}$ which is a reflexive space, so we may assume that $u_{n}\rightharpoonup u$ in $\Y_{\mu}$ for some $u\in \Y_{\mu}$.

It is clear that $\J_{\mu}'(u)=0$. Indeed, for all $\phi\in C^{\infty}_{c}(\R^{N})$,
\begin{align*}
&\int_{\R^{N}} |\nabla u_{n}|^{t-2}\nabla u_{n}\cdot \nabla \phi\, dx\rightarrow \int_{\R^{N}} |\nabla u|^{t-2}\nabla u\cdot \nabla\phi\, dx, \quad \mbox{ for } t\in \{p, q\}, \\
&\int_{\R^{N}} |u_{n}|^{t-2}u_{n}\phi\, dx\rightarrow \int_{\R^{N}} |u|^{t-2}u\phi\, dx, \quad \mbox{ for } t\in \{p, q\}, \\
&\int_{\R^{N}} f(u_{n})\phi \,dx\rightarrow \int_{\R^{N}} f(u)\phi \,dx,
\end{align*}
and using the fact that $\langle \J_{\mu}'(u_{n}),\phi \rangle=o_{n}(1)$, we can deduce that $\langle \J_{\mu}'(u),\phi \rangle=0$ for all $\phi\in C^{\infty}_{c}(\R^{N})$. By the density of $\phi\in C^{\infty}_{c}(\R^{N})$ in $\mathbb{Y}_{\mu}$, we obtain that $u$ is a critical point of $\J_{\mu}$.

Now, if $u\neq 0$, then $u$ is a nontrivial solution to \eqref{Pmu}.
Assume that $u=0$. Then $\|u_{n}\|_{\mu}\not \rightarrow 0$ in $\Y_{\mu}$. Hence, arguing as in the proof of Lemma \ref{lem2.2a} we can find a sequence $\{y_{n}\}\subset \R^{N}$ and constants $R, \beta>0$ such that
\begin{align}\label{tv1}
\liminf_{n\ri \infty} \int_{\B_{R}(y_{n})} |u_{n}|^{q} dx \geq \beta>0.
\end{align}
Now, let us define
\begin{align*}
\tilde{v}_{n}(x)=u_{n}(x+y_{n}).
\end{align*}
Due to the invariance by translations of $\R^{N}$, it is clear that $\|\tilde{v}_{n}\|_{\mu, t}=\|u_{n}\|_{\mu, t}$, with $t\in \{p, q\}$, so $\{\tilde{v}_{n}\}$ is bounded in $\Y_{\mu}$ and there exists $\tilde{v}$ such that $\tilde{v}_{n}\rightharpoonup \tilde{v}$ in $\Y_{\mu}$, $\tilde{v}_{n}\ri \tilde{v}$ in $L^{m}_{loc}(\R^{N})$ for any $m\in [1, \q)$ and $\tilde{v}\neq 0$ in view of \eqref{tv1}. Moreover, $\J_{\mu}(\tilde{v}_{n})= \J_{\mu}(u_{n})$ and $\J'_{\mu}(\tilde{v}_{n})=o_{n}(1)$, and arguing as before it is easy to check that $\J_{\mu}'(\tilde{v})=0$.

Now, say $u$ be the solution obtained before, and we prove that $u$ is a ground state solution. It is clear that $d_{\mu}\leq \J_{\mu}(u)$. On the other hand, by Fatou's lemma we can see that
\begin{align*}
\J_{\mu}(u)= \J_{\mu}(u)- \frac{1}{q}\langle \J_{\mu}'(u), u\rangle \leq \liminf_{n\ri \infty} \left[\J_{\mu}(u_{n})- \frac{1}{q}\langle \J_{\mu}'(u_{n}), u_{n}\rangle\right]= d_{\mu},
\end{align*}
which implies that $d_{\mu}=\J_{\mu}(u)$.

Finally, we prove that the ground state obtained earlier is positive.
Indeed, taking $u^{-}= \min \{u, 0\}$ as test function in \eqref{Pmu}, and applying $(f_{1})$ and invoking the following inequality
\begin{align*}
|x- y|^{t-2} (x- y) (x^{-} - y^{-}) \geq |x^{-}- y^{-}|^{t} \quad \forall t> 1,
\end{align*}
we can see that
\begin{align*}
\|u^{-}\|^{p}_{\mu, p} + \|u^{-}\|^{q}_{\mu, q} &\leq \int_{\R^{N}} |\nabla u|^{p-2} \nabla u\cdot \nabla u^{-} \, dxdy + \int_{\R^{N}} \mu |u|^{p-2} u u^{-} \, dx \\
&\quad+  \int_{\R^{N}} |\nabla u|^{q-2} \nabla u\cdot \nabla u^{-} \, dxdy  + \int_{\R^{N}} \mu |u|^{q-2} u u^{-} \, dx \\
&= \int_{\R^{N}} f(u)u^{-} \, dx =0,
\end{align*}
which implies that $u^{-}=0$, that is $u\geq 0$ in $\R^{N}$. By the regularity results in \cite{HL}, we have that $u\in L^{\infty}(\R^{N})\cap C^{1, \alpha}_{loc}(\R^{N})$ and $u(x)\ri 0$ as $|x|\ri \infty$ (in the exponential way). 
Applying the Harnack inequality in \cite{Trudinger}, we can see that $u>0$ in $\R^{N}$. 
This completes the proof of the lemma.
\end{proof}

\section{A first existence result for \eqref{P}}\label{Sect5}

\noindent
In this section we focus on the existence of a solution to \eqref{P} provided that $\e$ is sufficiently small. Let us start with the following useful lemma.
\begin{lemma}\label{lem2.2}
Let $\{u_{n}\}\subset \N_{\e}$ be a sequence such that $\I_{\e}(u_{n})\rightarrow c$ and $u_{n}\rightharpoonup 0$ in $\X_{\e}$. Then one of the following alternatives occurs:
\begin{compactenum}[$(a)$]
\item $u_{n}\rightarrow 0$ in $\X_{\e}$;
\item there are a sequence $\{y_{n}\}\subset \R^{N}$ and constants $R, \beta>0$ such that
\begin{equation*}
\liminf_{n\rightarrow \infty} \int_{\B_{R}(y_{n})} |u_{n}|^{q} dx \geq \beta >0.
\end{equation*}
\end{compactenum}
\end{lemma}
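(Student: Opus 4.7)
The statement is a standard concentration/vanishing dichotomy for Palais--Smale-type sequences on the Nehari manifold, and my plan is to rule out vanishing by deriving strong convergence to zero. Suppose alternative $(b)$ fails, i.e., for every $R>0$,
$$
\lim_{n\to\infty}\sup_{y\in\R^{N}}\int_{B_{R}(y)}|u_{n}|^{q}\,dx=0.
$$
My goal then becomes to show that $u_{n}\to 0$ strongly in $\X_{\e}$, which will give alternative $(a)$.

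First I would establish boundedness of $\{u_{n}\}$ in $\X_{\e}$. Since $u_{n}\in\N_{\e}$, the computation already used in Lemma \ref{lemB} gives
$$
\I_{\e}(u_{n})=\I_{\e}(u_{n})-\tfrac{1}{\vartheta}\langle\I'_{\e}(u_{n}),u_{n}\rangle\geq\Bigl(\tfrac{1}{q}-\tfrac{1}{\vartheta}\Bigr)\bigl(\|u_{n}\|_{V,p}^{p}+\|u_{n}\|_{V,q}^{q}\bigr),
$$
so from $\I_{\e}(u_{n})\to c$ we deduce that $\|u_{n}\|_{V,p}^{p}+\|u_{n}\|_{V,q}^{q}$ is bounded, and hence that $\|u_{n}\|_{\e}$ is bounded (arguing as in the three cases of Lemma \ref{lemB}-$(i)$). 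In particular, $\{u_{n}\}$ is bounded in $W^{1,q}(\R^{N})$.

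Next I apply the Lions-type Lemma \ref{Lions} (with $p$ replaced by $q$ and the exponent $q\in[q,\q)$) to conclude from the vanishing assumption that
$$
u_{n}\to 0\quad\text{in }L^{t}(\R^{N})\text{ for every }t\in(q,\q).
$$
Since $(f_{3})$ provides $r\in(q,\q)$, in particular $|u_{n}|_{r}\to 0$.

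Finally, the Nehari identity $\|u_{n}\|_{V,p}^{p}+\|u_{n}\|_{V,q}^{q}=\int_{\R^{N}}f(u_{n})u_{n}\,dx$ is combined with \eqref{growthf}. Given $\xi\in(0,V_{0})$ and using $|u_{n}|_{p}^{p}\leq V_{0}^{-1}\|u_{n}\|_{V,p}^{p}$, I obtain
$$
\|u_{n}\|_{V,p}^{p}+\|u_{n}\|_{V,q}^{q}\leq\xi|u_{n}|_{p}^{p}+C_{\xi}|u_{n}|_{r}^{r}\leq\tfrac{\xi}{V_{0}}\|u_{n}\|_{V,p}^{p}+o_{n}(1),
$$
which rearranges to
$$
\Bigl(1-\tfrac{\xi}{V_{0}}\Bigr)\|u_{n}\|_{V,p}^{p}+\|u_{n}\|_{V,q}^{q}\leq o_{n}(1).
$$
Hence $\|u_{n}\|_{V,p}\to 0$ and $\|u_{n}\|_{V,q}\to 0$, so $\|u_{n}\|_{\e}\to 0$, proving $(a)$.

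The only mildly delicate point is the handling of the $L^{p}$-mass: Lions' lemma does not kill $|u_{n}|_{p}$, so one must absorb the corresponding contribution of $f(u_{n})u_{n}$ into the left-hand side via the potential bound $V_{0}\leq V(\e x)$. Apart from this the argument is routine, and the hypotheses $u_{n}\rightharpoonup 0$ in $\X_{\e}$ and $\I_{\e}(u_{n})\to c$ are only used through the boundedness they provide; the $\liminf$ in $(b)$ comes out automatically once we apply Lions' criterion contrapositively.
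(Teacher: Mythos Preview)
Your proof is correct and follows essentially the same route as the paper: assume $(b)$ fails, use boundedness of $\{u_{n}\}$ together with Lions' lemma to get $u_{n}\to 0$ in $L^{t}(\R^{N})$ for $t\in(q,\q)$, and then absorb the $\xi|u_{n}|_{p}^{p}$ term from \eqref{growthf} into the left-hand side of the Nehari identity via $V_{0}\leq V(\e x)$ to force $\|u_{n}\|_{\e}\to 0$. The paper merely refers back to the computation in Lemma~\ref{lem2.2a} for this last step, which is exactly the estimate you wrote out explicitly.
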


\begin{proof}
Assume that $(b)$ does not hold. Then, for any $R>0$, the following holds
\begin{equation*}
\lim_{n\rightarrow \infty} \sup_{y\in \R^{N}} \int_{\B_{R}(y)} |u_{n}|^{q} dx=0.
\end{equation*}
Since $\{u_{n}\}$ is bounded in $\X_{\e}$, it follows by Lemma \ref{Lions} that
\begin{equation}\label{tv4}
u_{n}\rightarrow 0 \mbox{ in } L^{t}(\R^{N}) \quad \mbox{ for any } t\in (q, \q).
\end{equation}
Now, we can argue as in the proof of Lemma \ref{lem2.2a} and deduce that $\|u_{n}\|_{\e}\rightarrow 0$ as $n\rightarrow \infty$.
\end{proof}

In order to get a compactness result for $\I_{\e}$, we need to prove the following auxiliary lemma.
\begin{lemma}\label{lem2.3}
Assume that $V_{\infty}<\infty$ and let $\{v_{n}\}\subset \N_{\e}$ be a sequence such that $\I_{\e}(v_{n})\rightarrow d$ with $v_{n}\rightharpoonup 0$ in $\X_{\e}$. If $v_{n}\not \rightarrow 0$ in $\X_{\e}$, then $d\geq d_{V_{\infty}}$, where $d_{V_{\infty}}$ is the infimum of $\J_{V_{\infty}}$ over $\M_{V_{\infty}}$.
\end{lemma}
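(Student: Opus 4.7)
My plan is to exploit the concentration alternative in Lemma \ref{lem2.2}, translate $\{v_n\}$ to a sequence that weakly converges to a nontrivial limit ``at infinity'', rescale it onto the autonomous Nehari manifold $\M_{V_\infty}$, and then control the energy gap between $\I_\e$ and $\J_{V_\infty}$ using the Rabinowitz condition \eqref{V0}.

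Since $v_n\rightharpoonup 0$ but $v_n\not\to 0$, alternative $(b)$ of Lemma \ref{lem2.2} produces $\{y_n\}\subset\R^N$ and $R,\beta>0$ with $\liminf_{n}\int_{B_R(y_n)}|v_n|^q\,dx\geq \beta$. I would first note that $|y_n|\to\infty$, since otherwise Lemma \ref{embedding} would force $v_n\to 0$ in $L^q(B_{R'}(0))$ for some $R'>0$, contradicting the concentration bound. Setting $\tilde v_n(x):=v_n(x+y_n)$, the translation invariance of the $\Y_{V_\infty}$-norm gives that $\{\tilde v_n\}$ is bounded in $\Y_{V_\infty}$, so up to a subsequence $\tilde v_n\rightharpoonup\tilde v\not\equiv 0$. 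After passing to a Palais--Smale sequence on $\N_\e$ at the same energy level via Ekeland's principle and invoking Lemma \ref{lemB}$(ii)$, I may further assume $v_n\geq 0$, so that $\tilde v\geq 0$ and $\{\tilde v>0\}$ has positive measure.

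Lemma \ref{SW2A}$(i)$ provides a unique $t_n>0$ with $t_n\tilde v_n\in\M_{V_\infty}$; the main obstacle I foresee is proving $\{t_n\}$ bounded. If $t_n\to\infty$ along a subsequence, dividing the identity $\langle\J'_{V_\infty}(t_n\tilde v_n),t_n\tilde v_n\rangle=0$ by $t_n^q$ yields
\[
t_n^{p-q}\|\tilde v_n\|_{V_\infty,p}^{p}+\|\tilde v_n\|_{V_\infty,q}^{q}=\int_{\{\tilde v_n>0\}}\frac{f(t_n\tilde v_n)}{(t_n\tilde v_n)^{q-1}}\,\tilde v_n^{q}\,dx,
\]
with bounded left-hand side. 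Integrating $(f_4)$ gives $F(t)\geq C t^{\vartheta}$ for $t\geq 1$, and together with $(f_4)$--$(f_5)$ this forces $f(t)/t^{q-1}\to\infty$ as $t\to\infty$. On $\{\tilde v>0\}$ the integrand then tends pointwise to $+\infty$, and Fatou's lemma makes the right-hand side blow up, a contradiction.

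Finally, a direct change of variables gives
\[
\J_{V_\infty}(t_n\tilde v_n)-\I_\e(t_n v_n)=\frac{t_n^p}{p}\int_{\R^N}(V_\infty-V(\e x))|v_n|^p\,dx+\frac{t_n^q}{q}\int_{\R^N}(V_\infty-V(\e x))|v_n|^q\,dx.
\]
For fixed $\eta>0$, \eqref{V0} provides $R_\eta>0$ with $V(x)\geq V_\infty-\eta$ for $|x|\geq R_\eta$. Splitting each integral at radius $R_\eta/\e$, the interior contribution vanishes as $n\to\infty$ because $v_n\to 0$ in $L^t_{\mathrm{loc}}(\R^N)$ by Lemma \ref{embedding}, while the exterior one is dominated by $\eta\,|v_n|_t^t\leq C\eta$; together with the boundedness of $\{t_n\}$ this yields $\J_{V_\infty}(t_n\tilde v_n)\leq\I_\e(t_n v_n)+C\eta+o_n(1)$. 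Since $v_n\in\N_\e$, Lemma \ref{newlemma} gives $\I_\e(t_n v_n)\leq\I_\e(v_n)$; hence
\[
d_{V_\infty}\leq\J_{V_\infty}(t_n\tilde v_n)\leq\I_\e(v_n)+C\eta+o_n(1),
\]
and letting $n\to\infty$ and then $\eta\to 0$ gives $d\geq d_{V_\infty}$.
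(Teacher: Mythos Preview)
Your approach is correct in its essentials and is genuinely different from (and in one respect cleaner than) the paper's proof. The paper projects $v_n$ itself onto $\M_{V_\infty}$, proves the sharper bound $\limsup_n t_n\le 1$ by comparing the two Nehari identities, and then performs a case split ($t_n\to 1$ versus $t_n\to t_0<1$), handling each case with a separate energy comparison. You instead only need $\{t_n\}$ bounded, and your one-line observation $\I_\e(t_n v_n)\le \I_\e(v_n)$ (which holds because $v_n\in\N_\e$ realizes the maximum along its own ray) replaces the paper's entire case analysis. The translation-invariance identity
\[
\J_{V_\infty}(t_n\tilde v_n)-\I_\e(t_n v_n)=\frac{t_n^p}{p}\int_{\R^N}(V_\infty-V(\e x))|v_n|^p\,dx+\frac{t_n^q}{q}\int_{\R^N}(V_\infty-V(\e x))|v_n|^q\,dx
\]
is correct, and your localization argument using \eqref{V0} and compact embedding handles the right-hand side exactly as intended.

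There is, however, a genuine (if minor) gap in the step where you ``pass to a Palais--Smale sequence on $\N_\e$ at the same energy level via Ekeland's principle and invoke Lemma \ref{lemB}$(ii)$'' to assume $v_n\ge 0$. Ekeland's principle at level $d$ applies only when $d$ is the infimum, which is not assumed here; and even if you had a PS sequence, Lemma \ref{lemB}$(ii)$ replaces $v_n$ by $v_n^+$, which need not lie in $\N_\e$, so you would lose the inequality $\I_\e(t_n v_n)\le\I_\e(v_n)$ that your argument relies on. The fix is to bypass this entirely: since $v_n\in\N_\e$ and $f$ vanishes on $(-\infty,0]$, the Nehari identity reads $\|v_n\|_{V,p}^p+\|v_n\|_{V,q}^q=\int f(v_n^+)v_n^+\,dx$, and \eqref{growthf} forces $|v_n^+|_r$ to stay bounded away from $0$. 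Applying Lemma \ref{Lions} to the bounded sequence $\{v_n^+\}$ yields concentration points $y_n$ for $v_n^+$; translating by these $y_n$ gives $\tilde v_n$ with $\tilde v_n^+\to\tilde v^+\neq 0$ in $L^q_{\mathrm{loc}}$, so $\{\tilde v>0\}$ has positive measure and your Fatou argument for the boundedness of $\{t_n\}$ goes through unchanged. (The paper's proof also uses $|\{\tilde v>0\}|>0$ without making this explicit.)
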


\begin{proof}
Let $\{t_{n}\}\subset (0, \infty)$ be such that $\{t_{n}v_{n}\}\subset \M_{V_{\infty}}$. 
Our aim is to show that $\limsup_{n\ri \infty} t_{n}\leq 1$. \\
Assume by contradiction that there exist $\delta>0$ and a subsequence, denoted again by $\{t_{n}\}$, such that
\begin{equation}\label{tv8}
t_{n}\geq 1+\delta \quad \mbox{ for any } n\in \mathbb{N}.
\end{equation}
Since $\{v_{n}\}\subset \X_{\e}$ is a bounded $(PS)$ sequence for $\I_{\e}$, we have that $\langle \I'_{\e}(v_{n}), v_{n}\rangle =o_{n}(1)$, or equivalently
\begin{align}\label{tv3}
|\nabla v_{n}|_{p}^{p} + |\nabla v_{n}|_{q}^{q} + \int_{\R^{N}} V(\e x) |v_{n}|^{p} dx + \int_{\R^{N}} V(\e x) |v_{n}|^{q} dx - \int_{\R^{N}} f(v_{n})v_{n}\, dx = o_{n}(1).
\end{align}
Since $t_{n}v_{n}\in \M_{V_{\infty}}$, we also have that
\begin{align}\label{tv4}
&t_{n}^{p-q}|\nabla v_{n}|_{p}^{p} +  |\nabla v_{n}|_{q}^{q} + t_{n}^{p-q} V_{\infty} \int_{\R^{N}} |v_{n}|^{p} dx + V_{\infty} \int_{\R^{N}} |v_{n}|^{q} dx - \int_{\R^{N}} \frac{f(t_{n}v_{n})}{(t_{n}v_{n})^{q-1}} v_{n}^{q}\, dx =0.
\end{align}
Putting together \eqref{tv3} and \eqref{tv4}, we get
\begin{align}\label{tv5}
\int_{\R^{N}} \left(\frac{f(t_{n}v_{n})}{(t_{n}v_{n})^{q-1}}- \frac{f(v_{n})}{(v_{n})^{q-1}}\right) v_{n}^{q}\, dx \leq \int_{\R^{N}} (V_{\infty}-V(\e x)) |v_{n}|^{q} dx.
\end{align}
Now, using assumption \eqref{V0} we can see that, given $\zeta>0$, there exists $R=R(\zeta)>0$ such that
\begin{align}\label{hm}
V(\e x) \geq V_{\infty}-\zeta \quad \mbox{ for any } |x|\geq R.
\end{align}
From this, taking into account that $v_{n}\ri 0$ in $L^{q}(B_{R})$ and the boundedness of $\{v_{n}\}$ in $\X_{\e}$, we can infer
\begin{align}\label{tv6}
\int_{\R^{N}} &(V_{\infty}-V(\e x)) |v_{n}|^{q} dx = \int_{B_{R}(0)}(V_{\infty}-V(\e x)) |v_{n}|^{q} dx + \int_{\R^{N}\setminus B_{R}(0)}(V_{\infty}-V(\e x)) |v_{n}|^{q} dx  \nonumber \\
&\leq V_{\infty} \int_{B_{R}(0)} |v_{n}|^{q} dx + \zeta \int_{\R^{N}\setminus B_{R}(0)}|v_{n}|^{q} dx \nonumber \\
&\leq o_{n}(1) + \zeta C.
\end{align}
Combining \eqref{tv5} and \eqref{tv6}, we have
\begin{equation}\label{tv9}
\int_{\R^{N}} \left(\frac{f(t_{n}v_{n})}{(t_{n}v_{n})^{q-1}}- \frac{f(v_{n})}{(v_{n})^{q-1}}\right) v_{n}^{q}\, dx \leq o_{n}(1) + \zeta C.
\end{equation}
Since $v_{n}\not\rightarrow 0$ in $\X_{\e}$, we can apply Lemma \ref{lem2.2} to deduce the existence of a sequence $\{y_{n}\}\subset \R^{N}$ and two positive numbers $\bar{R}, \beta$ such that
\begin{align}\label{tv7}
\int_{\B_{\bar{R}}(y_{n})} |v_{n}|^{q}\, dx \geq \beta>0.
\end{align}
Let us consider $\tilde{v}_{n}=v_{n}(x+y_{n})$. Then we may assume that, up to a subsequence, $\tilde{v}_{n}\rightharpoonup \tilde{v}$ in $\X_{\e}$. By \eqref{tv7} there exists $\Omega\subset\R^{N}$ with positive measure and such that $\tilde{v}>0$ in $\Omega$. From \eqref{tv8}, $(f_{4})$ and \eqref{tv9}, we can infer that
\begin{equation*}
0<\int_{\Omega} \left(\frac{f((1+\delta)\tilde{v}_{n})}{((1+\delta)\tilde{v}_{n})^{q-1}}- \frac{f(\tilde{v}_{n})}{(\tilde{v}_{n})^{q-1}}\right) \tilde{v}_{n}^{q}\, dx \leq o_{n}(1) + \zeta C.
\end{equation*}
Taking the limit as $n\ri \infty$ and applying Fatou's lemma, we obtain
\begin{equation*}
0<\int_{\Omega} \left(\frac{f((1+\delta)\tilde{v})}{((1+\delta)\tilde{v})^{q-1}}- \frac{f(\tilde{v})}{(\tilde{v})^{q-1}}\right) \tilde{v}^{q}\, dx \leq \zeta C \quad \mbox{ for any } \zeta>0,
\end{equation*}
which is a contradiction.

Now we consider the following cases:

\textsc{Case 1:} Assume that $\limsup_{n\rightarrow \infty} t_{n}=1$. Thus there exists $\{t_{n}\}$ such that $t_{n}\rightarrow 1$. Taking into account that  $\I_{\e}(v_{n})\rightarrow c$, we have
\begin{align}\label{tv12new}
c+ o_{n}(1)&= \I_{\e}(v_{n})\nonumber \\
&=\I_{\e}(v_{n}) - \J_{V_{\infty}}(t_{n}v_{n})+ \J_{V_{\infty}}(t_{n}v_{n}) \nonumber \\
&\geq \I_{\e}(v_{n}) -\J_{V_{\infty}}(t_{n}v_{n}) + d_{V_{\infty}}.
\end{align}
Now, let us point out that
\begin{align}\begin{split}\label{tv12}
&\I_{\e}(v_{n}) -\J_{V_{\infty}}(t_{n}v_{n}) \\
&\quad = \frac{(1-t_{n}^{p})}{p}  |\nabla v_{n}|_{p}^{p} + \frac{(1-t_{n}^{q})}{q}  |\nabla v_{n}|_{q}^{q} + \frac{1}{p} \int_{\R^{N}} \left( V(\e x) - t_{n}^{p} V_{\infty}\right) |v_{n}|^{p} dx  \\
&\qquad +\frac{1}{q} \int_{\R^{N}} \left( V(\e x) - t_{n}^{q} V_{\infty}\right) |v_{n}|^{q} dx + \int_{\R^{N}} \left( F(t_{n} v_{n}) -F(v_{n}) \right) \, dx.
\end{split} \end{align}
Using condition \eqref{V0},  $v_{n}\rightarrow 0$ in $L^{p}(B_{R}(0))$, $t_{n}\rightarrow 1$, \eqref{hm}, and the fact that
\begin{align*}
V(\e x) - t_{n}^{p} V_{\infty} =\left(V(\e x) - V_{\infty} \right) + (1- t_{n}^{p}) V_{\infty}\geq -\zeta + (1- t_{n}^{p}) V_{\infty} \, \mbox{ for any } |x|\geq R,
\end{align*}
we get
\begin{align}\label{tv13}
&\int_{\R^{N}}  \left( V(\e x) - t_{n}^{p} V_{\infty}\right) |v_{n}|^{p} dx \nonumber \\
&= \int_{B_{R}(0)} \left( V(\e x) - t_{n}^{p} V_{\infty}\right) |v_{n}|^{p} dx+ \int_{\R^{N}\setminus B_{R}(0)} \left( V(\e x) - t_{n}^{p} V_{\infty}\right) |v_{n}|^{p} dx \nonumber \\
&\geq (V_{0}- t_{n}^{p}V_{\infty}) \int_{B_{R}(0)} |v_{n}|^{p} dx - \zeta \int_{\R^{N}\setminus B_{R}(0)} |v_{n}|^{p} dx+ V_{\infty}(1-t_{n}^{p}) \int_{\R^{N}\setminus B_{R}(0)} |v_{n}|^{p} dx \nonumber \\
&\geq o_{n}(1)- \zeta C.
\end{align}
In a similar fashion we can prove that
\begin{align}\label{tv131}
\int_{\R^{N}} & \left(V(\e x) - t_{n}^{q} V_{\infty}\right) |v_{n}|^{q} dx \geq o_{n}(1)- \zeta C.
\end{align}
Since $\{v_{n}\}$ is bounded in $\X_{\e}$, we can conclude that
\begin{align}\label{tv14}
\frac{(1-t_{n}^{p})}{p}  |\nabla v_{n}|_{p}^{p}= o_{n}(1) \quad \mbox{ and }\quad \frac{(1-t_{n}^{q})}{q}  |\nabla v_{n}|_{q}^{q}= o_{n}(1).
\end{align}
Thus, putting together \eqref{tv12}, \eqref{tv13}, \eqref{tv131} and \eqref{tv14}, we obtain
\begin{align}\label{tv15}
\I_{\e}(v_{n}) -\J_{V_{\infty}}(t_{n}v_{n}) \geq \int_{\R^{N}} \left( F(t_{n} v_{n}) -F(v_{n}) \right) \, dx +o_{n}(1)- \zeta C.
\end{align}
At this point, we aim to show that
\begin{align}\label{tv16}
\int_{\R^{N}} \left( F(t_{n} v_{n}) -F(v_{n}) \right) \, dx=o_{n}(1).
\end{align}
Applying the mean value theorem and \eqref{growthf}, we can deduce that
\begin{align*}
\int_{\R^{N}} | F(t_{n} v_{n}) -F(v_{n}) | \, dx \leq C|t_{n}-1|\int_{\R^{N}} |v_{n}|^{p} dx + C|t_{n}-1|\int_{\R^{N}} |v_{n}|^{r} dx.
\end{align*}
Exploiting the boundedness of $\{v_{n}\}$, we get the assertion. Gathering \eqref{tv12new}, \eqref{tv15} and \eqref{tv16}, we can infer that
\begin{align*}
c+ o_{n}(1)\geq o_{n}(1) - \zeta C + d_{V_{\infty}},
\end{align*}
and taking the limit as $\zeta \ri 0$ we get $c \geq d_{V_{\infty}}$.

\textsc{Case 2:} Assume that $\limsup_{n\rightarrow \infty} t_{n}=t_{0}<1$. Then there is a subsequence, still denoted by $\{t_{n}\}$, such that $t_{n}\rightarrow t_{0} (<1)$ and $t_{n}<1$ for any $n\in \mathbb{N}$.
Let us observe that
\begin{align}\label{tv17}
c+o_{n}(1)&= \I_{\e}(v_{n}) - \frac{1}{q}\langle \I'_{\e}(v_{n}), v_{n} \rangle \nonumber \\
&= \left(\frac{1}{p}- \frac{1}{q}\right) \|v_{n}\|_{V, p}^{p}  + \int_{\R^{N}} \left(\frac{1}{q}f(v_{n}) v_{n} - F(v_{n})\right) \,dx.
\end{align}
Recalling that $t_{n}v_{n}\in \M_{V_{\infty}}$, and using $(f_{5})$ and \eqref{tv17}, we obtain
\begin{align*}
d_{V_{\infty}} &\leq \J_{V_{\infty}}(t_{n}v_{n})  \\
&= \J_{V_{\infty}}(t_{n}v_{n}) - \frac{1}{q} \langle \J'_{V_{\infty}}(t_{n}v_{n}), t_{n}v_{n} \rangle \\
&=\left(\frac{1}{p}- \frac{1}{q}\right) \|t_{n}v_{n}\|_{V, p}^{p} +  \int_{\R^{N}} \left(\frac{1}{q} f(t_{n}v_{n}) t_{n}v_{n}- F(t_{n}v_{n})\right) \, dx \\
&\leq \left(\frac{1}{p}- \frac{1}{q}\right) \|v_{n}\|_{V, p}^{p} + \int_{\R^{N}} \left(\frac{1}{q}f(v_{n}) v_{n} - F(v_{n})\right) \,dx \\
&=c +o_{n}(1).
\end{align*}
Taking the limit as $n\rightarrow \infty$, we get $c\geq d_{V_{\infty}}$.
\end{proof}

\noindent
At this point we are able to prove the following compactness result.
\begin{proposition}\label{prop2.1}
Let $\{u_{n}\}\subset \N_{\e}$ be such that $\I_{\e}(u_{n})\rightarrow c$, where $c<d_{V_{\infty}}$ if $V_{\infty}<\infty$ and $c\in \R$ if $V_{\infty}=\infty$. Then $\{u_{n}\}$ has a convergent subsequence in $\X_{\e}$.
\end{proposition}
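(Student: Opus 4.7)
The plan is to extract a weakly convergent subsequence $u_n\rightharpoonup u$ in $\X_\e$ and then upgrade the convergence to strong. First $\{u_n\}$ is bounded in $\X_\e$: since $u_n\in\N_\e$ one has $\langle\I'_\e(u_n),u_n\rangle=0$, so by $(f_4)$ with $\vartheta\in(q,\q)$,
\[
c+o_n(1)=\I_\e(u_n)-\tfrac{1}{\vartheta}\langle\I'_\e(u_n),u_n\rangle\geq\left(\tfrac{1}{p}-\tfrac{1}{\vartheta}\right)\|u_n\|_{V,p}^p+\left(\tfrac{1}{q}-\tfrac{1}{\vartheta}\right)\|u_n\|_{V,q}^q,
\]
and both bracketed coefficients are positive. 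Passing to a subsequence, $u_n\rightharpoonup u$ in $\X_\e$; assuming $\I'_\e(u_n)\to 0$ (the statement being implicitly about a Palais--Smale sequence), the weak sequential continuity from Lemma~\ref{SW1}$(ii)$ gives $\I'_\e(u)=0$, so in particular $\langle\I'_\e(u),u\rangle=0$.

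If $V_\infty=\infty$, Lemma~\ref{lem6} provides a compact embedding $\X_\e\hookrightarrow L^m(\R^N)$ for all $p\leq m<\q$, so $v_n:=u_n-u\to 0$ in $L^p$ and in $L^r$. The subcritical bound \eqref{growthf} then yields $\int f(v_n)v_n\,dx\to 0$ and $\int f(u_n)u_n\,dx\to\int f(u)u\,dx$. Using Lemma~\ref{lem7}$(i)$--$(ii)$ to split the $V$-weighted norms, together with Nehari for $u_n$ and $\langle\I'_\e(u),u\rangle=0$, one obtains $\|v_n\|_{V,p}^p+\|v_n\|_{V,q}^q\to 0$, i.e.\ $u_n\to u$ in $\X_\e$.

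Now assume $V_\infty<\infty$ and $c<d_{V_\infty}$. If $u\equiv 0$, Lemma~\ref{lem2.2} yields either $u_n\to 0$ in $\X_\e$, contradicting $c\geq c_\e\geq\rho>0$ from Lemma~\ref{SW2}$(v)$, or a sequence $y_n\in\R^N$ and $R,\beta>0$ with $\liminf_n\int_{B_R(y_n)}|u_n|^q\,dx\geq\beta$; in the latter sub-case Lemma~\ref{lem2.3} forces $c\geq d_{V_\infty}$, again a contradiction. Hence $u\neq 0$, so $u\in\N_\e$ and $\I_\e(u)\geq c_\e>0$. Setting $v_n:=u_n-u\rightharpoonup 0$, Lemma~\ref{lem7} gives
\[
\I_\e(v_n)=\I_\e(u_n)-\I_\e(u)+o_n(1)\to c-\I_\e(u)<d_{V_\infty},\qquad\langle\I'_\e(v_n),v_n\rangle=o_n(1),
\]
so $\{v_n\}$ is an approximate Nehari sequence at a level strictly below $d_{V_\infty}$. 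Suppose by contradiction $v_n\not\to 0$; by the Lions-type argument of Lemma~\ref{lem2.2} there exist $y_n\in\R^N$ with $\int_{B_R(y_n)}|v_n|^q\,dx\geq\beta>0$, and $|y_n|\to\infty$ since $v_n\rightharpoonup 0$ and the embedding into $L^q_{loc}$ is compact. I then mimic the proof of Lemma~\ref{lem2.3}: pick $s_n>0$ with $s_n v_n\in\M_{V_\infty}$ (Lemma~\ref{SW2A}$(i)$), show $\limsup_n s_n\leq 1$ by playing $(f_5)$ against the localized lower bound $V(\e x)\geq V_\infty-\zeta$ outside a large ball as in \eqref{tv5}--\eqref{tv9}, and compare $\J_{V_\infty}(s_n v_n)$ with $\I_\e(v_n)$ modulo $o_n(1)+O(\zeta)$ errors via \eqref{tv13}-type estimates. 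This delivers $d_{V_\infty}\leq c-\I_\e(u)\leq c$, contradicting $c<d_{V_\infty}$, so $v_n\to 0$ and $u_n\to u$ in $\X_\e$. The main obstacle is executing Lemma~\ref{lem2.3}'s rescaling argument for $\{v_n\}$ only approximately on the Nehari manifold; one must simultaneously control the rescaling parameter $s_n$, the localized potential error when trading $V(\e x)$ for $V_\infty$, and the $o_n(1)$ defect in the Nehari identity, before sending $\zeta\to 0^+$.
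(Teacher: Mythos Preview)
Your approach is essentially the paper's, but with an unnecessary detour and one step you treat too lightly.

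First, the detour: there is no need to argue separately that $u\neq 0$. The paper simply observes from $(f_4)$ that
\[
\I_\e(u)=\I_\e(u)-\tfrac{1}{q}\langle\I'_\e(u),u\rangle\ge 0
\]
regardless of whether $u$ vanishes. Hence $d:=c-\I_\e(u)\le c<d_{V_\infty}$ automatically, and one can pass directly to $v_n=u_n-u$ without a case split.

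Second, the step you label as following from Lemma~\ref{lem7}, namely $\langle\I'_\e(v_n),v_n\rangle=o_n(1)$, is not literally contained there: parts $(iii)$ and $(iv)$ give the splitting for $F$ and for $f$ tested against a fixed $w$, not for $\int f(v_n)v_n$. The paper instead proves the stronger statement $\I'_\e(v_n)=o_n(1)$ in $\X_\e^{*}$, handling the quasilinear gradient and potential terms via Lemma~\ref{lemVince} (the $A(y)=|y|^{t-2}y$ lemma) and the nonlinearity via Lemma~\ref{lem7}$(iv)$. Once this is in hand, your ``main obstacle'' disappears: the proof of Lemma~\ref{lem2.3} only uses that $\{v_n\}$ is a bounded Palais--Smale sequence with $v_n\rightharpoonup 0$ (see \eqref{tv3} there), not literal membership in $\N_\e$, so it applies verbatim to $v_n=u_n-u$ and yields $v_n\to 0$. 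There is no need to re-run the rescaling comparison by hand.
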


\begin{proof}
It is easy to see that $\{u_{n}\}$ is bounded in $\X_{\e}$. Then, up to a subsequence, we may assume that
\begin{align}\begin{split}\label{conv}
&u_{n}\rightharpoonup u \mbox{ in } \X_{\e}, \\
&u_{n}\rightarrow u \mbox{ in } L^{m}_{loc}(\R^{N}) \quad \mbox{ for any } m\in [1, \q), \\
&u_{n} \rightarrow u \mbox{ a.e. in } \R^{N}.
\end{split}\end{align}
By using assumptions $(f_{2})$-$(f_{3})$, \eqref{conv} and the fact that $\C^{\infty}_{c}(\R^{N})$ is dense in $\mathbb{X}_{\e}$, it is easy to check that $\I'_{\e}(u)=0$. \\
Now, let $v_{n}= u_{n}-u$.
By Lemma \ref{lem7}, we have
\begin{align}\label{tv19}
\I_{\e}(v_{n})
&=\I_{\e}(u_{n})-\I_{\e}(u)+o_{n}(1) \nonumber\\
&=c-\I_{\e}(u)+o_{n}(1)=d+o_{n}(1).
\end{align}
Now, we prove that $\I'_{\e}(v_{n})=o_{n}(1)$. For $t\in \{p, q\}$, by using Lemma \ref{lemVince} with $\eta_{n}= v_{n}$ and $w=u$, we get
\begin{equation}\label{D}
\iint_{\R^{2N}} |A(u_{n}) - A(v_{n}) - A(u)|^{t'} dx= o_{n}(1), 
\end{equation}
and arguing as in the proof of Lemma $3.3$ in \cite{MeW}, we can see that
\begin{equation}\label{C}
\int_{\R^{N}} V(\e x) ||v_{n}|^{t-2}v_{n}-|u_{n}|^{t-2}u_{n}+|u|^{t-2}u|^{t'} dx=o_{n}(1).
\end{equation}
Hence, by using the H\"older inequality, for any $\varphi\in \X_{\e}$ such that $\|\varphi\|_{\e}\leq 1$, we get
\begin{align*}
&|\langle \I'_{\e}(v_{n})-\I'_{\e}(u_{n})+\I'_{\e}(u), \varphi\rangle| \\
&\leq \left(\iint_{\R^{2N}}  |A(u_{n}) - A(v_{n}) - A(u)|^{p'} dx dy\right)^{\frac{1}{p'}} [\varphi]_{s, p} \\
&+ \left(\iint_{\R^{2N}}  |A(u_{n}) - A(v_{n}) - A(u)|^{q'} dx dy\right)^{\frac{1}{q'}} [\varphi]_{s, q} \\
&+\left(\int_{\R^{N}} V(\e x) ||v_{n}|^{p-2}v_{n}-|u_{n}|^{p-2}u_{n}+|u|^{p-2}u|^{p'} dx\right)^{p'} \left(\int_{\R^{N}} V(\e x) |\varphi|^{p}dx \right)^{\frac{1}{p}} \\
&+\left(\int_{\R^{N}} V(\e x) ||v_{n}|^{q-2}v_{n}-|u_{n}|^{q-2}u_{n}+|u|^{q-2}u|^{q'} dx\right)^{q'} \left(\int_{\R^{N}} V(\e x) |\varphi|^{q}dx \right)^{\frac{1}{q}} \\
&+\int_{\R^{N}} |(f(v_{n})-f(u_{n})+f(u)) \varphi| dx,
\end{align*}
and in view of $(iv)$ of Lemma \ref{lem7}, \eqref{D}, \eqref{C}, $\I'_{\e}(u_{n})=0$ and $\I'_{\e}(u)=0$ we obtain the assertion.

Now, we note that by using $(f_4)$ we can see that
\begin{equation}\label{tv199}
\I_{\e}(u)=\I_{\e}(u)-\frac{1}{q} \langle\I'_{\e}(u),u\rangle\geq 0.
\end{equation}
Assume $V_{\infty}<\infty$. It follows from \eqref{tv19} and \eqref{tv199} that
$$
d\leq c<d_{V_{\infty}}
$$
which together Lemma \ref{lem2.3} gives $v_{n}\rightarrow 0$ in $\X_{\e}$, that is $u_{n}\rightarrow u$ in $\X_{\e}$.\\
Let us consider the case $V_{\infty}=\infty$. Then, we can use Lemma \ref{lem6} to deduce that $v_{n}\rightarrow 0$ in $L^{m}(\R^{N})$ for all $m\in [p, \q)$. This, combined with assumptions $(f_2)$ and $(f_3)$, implies that
\begin{equation}\label{fn0}
\int_{\R^{N}} f(v_{n})v_{n}dx=o_{n}(1).
\end{equation}
Since $\langle\I'_{\e}(v_{n}), v_{n}\rangle=o_{n}(1)$, and applying \eqref{fn0} we can infer that
$$
\|v_{n}\|^{p}_{\e}=o_{n}(1),
$$
which yields $u_{n}\rightarrow u$ in $\X_{\e}$.
\end{proof}

\noindent
We conclude this section by giving the proof of the existence of a ground state solution to \eqref{Pe} (that is a nontrivial critical point $u$ of $\I_{\e}$ such that $\I_{\e}(u)=\inf_{\mathcal{N}_{\e}} \I_{\e}=c_{\e}$) whenever $\e>0$ is small enough.
\begin{theorem}\label{thm3.1}
Assume that $(V)$ and $(f_1)$-$(f_5)$ hold. Then there exists $\e_{0}>0$ such that, for any $\e\in (0, \e_{0})$, problem \eqref{Pe} admits a ground state solution.
\end{theorem}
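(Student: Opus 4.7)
The strategy is to combine the mountain pass geometry from Lemma \ref{lem2.1}, the compactness criterion in Proposition \ref{prop2.1}, and a comparison of mountain pass levels with the autonomous problem at $V_{0}$. First, by Lemma \ref{lem2.1} together with a version of the mountain pass theorem without the Palais-Smale condition \cite{W}, there exists a sequence $\{u_{n}\}\subset \X_{\e}$ with $\I_{\e}(u_{n})\to c'_{\e}$ and $\I'_{\e}(u_{n})\to 0$. By Lemma \ref{newlemma} we have $c'_{\e}=c_{\e}$, and by Lemma \ref{lemB} the sequence $\{u_{n}\}$ is bounded in $\X_{\e}$ and we may assume, up to passing to $u_{n}^{+}$, that $u_{n}\geq 0$ for all $n$.

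Next, to invoke Proposition \ref{prop2.1} we need $c_{\e}<d_{V_{\infty}}$ in the case $V_{\infty}<\infty$ (the case $V_{\infty}=\infty$ is automatic). The plan here is to show that
\[
\limsup_{\e\to 0} c_{\e}\leq d_{V_{0}}<d_{V_{\infty}}.
\]
The strict inequality $d_{V_{0}}<d_{V_{\infty}}$ follows from assumption $(V)$ and the minimax characterization \eqref{Nguyen2}: if $w_{0}$ is a ground state of $(AP_{V_{0}})$, then $\max_{t>0}\J_{V_{\infty}}(tw_{0})>\max_{t>0}\J_{V_{0}}(tw_{0})=d_{V_{0}}$ because $V_{\infty}>V_{0}$ produces a strictly larger functional. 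For the first inequality, pick $x_{0}\in M$ with $V(x_{0})=V_{0}$, let $w\in \Y_{V_{0}}$ be a positive ground state of $(AP_{V_{0}})$ given by Lemma \ref{lem4.3}, and take a standard cutoff $\eta\in C^{\infty}_{c}(\R^{N})$ with $\eta\equiv 1$ on $B_{1}$ and $\eta\equiv 0$ on $\R^{N}\setminus B_{2}$. Define
\[
w_{\e}(x)=\eta(\e x-x_{0})\,w\Bigl(x-\tfrac{x_{0}}{\e}\Bigr).
\]
Since $w\in L^{\infty}$ decays exponentially (by the regularity argument referenced in the proof of Lemma \ref{lem4.3}), one checks with a change of variables and dominated convergence that $\|w_{\e}\|^{t}_{V,t}\to \|w\|^{t}_{V_{0},t}$ for $t\in\{p,q\}$ and $\int F(w_{\e})\,dx\to \int F(w)\,dx$. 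Let $t_{\e}>0$ be the unique positive number with $t_{\e}w_{\e}\in\N_{\e}$, furnished by Lemma \ref{SW2}(i). Standard arguments, using assumption $(f_{5})$ and the fact that $t\mapsto\J_{V_{0}}(tw)$ attains its maximum at $t=1$, show $t_{\e}\to 1$ as $\e\to 0$. Hence
\[
c_{\e}\leq \I_{\e}(t_{\e}w_{\e}) \;\longrightarrow\; \J_{V_{0}}(w)=d_{V_{0}},
\]
which, together with $d_{V_{0}}<d_{V_{\infty}}$, yields $c_{\e}<d_{V_{\infty}}$ for all sufficiently small $\e\in(0,\e_{0})$.

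With this level estimate in hand, Proposition \ref{prop2.1} applied to $\{u_{n}\}$ produces $u_{\e}\in \X_{\e}$ with $u_{n}\to u_{\e}$ strongly in $\X_{\e}$, so $\I_{\e}(u_{\e})=c_{\e}$ and $\I'_{\e}(u_{\e})=0$. Since the sequence was chosen non-negative, $u_{\e}\geq 0$, and $u_{\e}\not\equiv 0$ follows from the fact that $\N_{\e}$ is bounded away from $0$ (Lemma \ref{SW2}(ii)). Consequently, $u_{\e}\in\N_{\e}$ and $\I_{\e}(u_{\e})=c_{\e}=\inf_{\N_{\e}}\I_{\e}$, so $u_{\e}$ is a ground state. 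Finally, the regularity results in \cite{HL} give $u_{\e}\in L^{\infty}(\R^{N})\cap C^{1,\alpha}_{\mathrm{loc}}(\R^{N})$, and Trudinger's Harnack inequality \cite{Trudinger} upgrades $u_{\e}\geq 0$ to $u_{\e}>0$ in $\R^{N}$, completing the proof.

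The main obstacle is the level comparison $\limsup_{\e\to 0}c_{\e}\leq d_{V_{0}}$: one must verify carefully that the cutoff does not destroy the $L^{p}+L^{q}$ structure and that both the kinetic and potential parts of $\I_{\e}(t_{\e}w_{\e})$ converge to the corresponding quantities of $\J_{V_{0}}(w)$, since two different exponents and a continuity-only nonlinearity $f$ make the standard estimates slightly more delicate than in the single Laplacian case.
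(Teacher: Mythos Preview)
Your proof is correct and follows the paper's overall strategy: produce a bounded Palais--Smale sequence at level $c_{\e}$, establish $c_{\e}<d_{V_{\infty}}$ for small $\e$, and apply Proposition~\ref{prop2.1}. Two differences deserve comment.

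First, the paper obtains its Palais--Smale sequence by applying Ekeland's principle to $\Psi_{\e}$ on $\mathbb{S}_{\e}$ and transferring to $\mathcal{N}_{\e}$ via Lemma~\ref{SW3}, whereas you invoke the mountain pass theorem directly. Both routes give a bounded Palais--Smale sequence at level $c_{\e}$, and the proof of Proposition~\ref{prop2.1} (together with Lemmas~\ref{lem2.2} and~\ref{lem2.3}) only uses the Palais--Smale property and boundedness, not literal membership in $\mathcal{N}_{\e}$, so it applies in your setting as well.

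Second, the level comparison is genuinely different. The paper fixes an intermediate $\mu\in(V_{0},V_{\infty})$, cuts off a ground state $w^{\mu}$ of $(AP_{\mu})$ at a \emph{fixed} radius $\bar{r}$, and then uses the continuity of $V$ to get $V(\e x)\le\mu$ on the compact support of the test function, yielding $c_{\e}\le\max_{t}\J_{\mu}(tw)<d_{V_{\infty}}$ directly for all $\e\in(0,\bar{\e})$. You instead use the $V_{0}$-ground state with an $\e$-dependent cutoff and pass to the limit, obtaining $\limsup_{\e\to 0}c_{\e}\le d_{V_{0}}$; this is essentially the computation carried out later in Lemma~\ref{lem4.1}. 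Both work; the paper's version avoids the limit $t_{\e}\to 1$ at the price of the auxiliary level $\mu$, while yours is more direct but requires that asymptotic analysis.

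One genuine slip: your argument for $d_{V_{0}}<d_{V_{\infty}}$ is written the wrong way round. Taking a ground state $w_{0}$ of $(AP_{V_{0}})$ gives $d_{V_{0}}=\max_{t}\J_{V_{0}}(tw_{0})<\max_{t}\J_{V_{\infty}}(tw_{0})$, but the right-hand side is only an \emph{upper} bound for $d_{V_{\infty}}$, so no strict inequality follows. The correct test function is a ground state $w_{\infty}$ of $(AP_{V_{\infty}})$ (it exists by Lemma~\ref{lem4.3}): then $d_{V_{0}}\le\max_{t}\J_{V_{0}}(tw_{\infty})<\max_{t}\J_{V_{\infty}}(tw_{\infty})=d_{V_{\infty}}$, the middle inequality being strict because $V_{\infty}>V_{0}$ and $w_{\infty}\not\equiv 0$.
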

\begin{proof}
By $(v)$ of Lemma \ref{SW2}, we know that $c_{\e}\geq \rho>0$ for each $\e>0$. Moreover, if $u_{\e}\in \N_{\e}$ satisfies $\I_{\e}(u_{\e})=c_{\e}$, then $m_{\e}^{-1}(u_{\e})$ is a minimizer of $\Psi_{\e}$ and it is a critical point of $\Psi_{\e}$. By virtue of Lemma \ref{SW3}, we can see that $u_{\e}$ is a critical point of $\I_{\e}$.
It remains to show that there exists a minimizer of $\I_{\e}|_{\N_{\e}}$. By Ekeland's variational principle \cite{W}, there exists a sequence $\{v_{n}\}\subset \mathbb{S}_{\e}$ such that $\Psi_{\e}(v_{n})\rightarrow c_{\e}$ and $\Psi'_{\e}(v_{n})\rightarrow 0$ as $n\rightarrow \infty$. Let $u_{n}=m_{\e}(v_{n}) \in \N_{\e}$. Then, by Lemma \ref{SW3}, we deduce that $\I_{\e}(u_{n})\rightarrow c_{\e}$, $\langle \I'_{\e}(u_{n}), u_{n}\rangle =0$ and $\I'_{\e}(u_{n})\rightarrow 0$ as $n\rightarrow \infty$.
Therefore, $\{u_{n}\}$ is a Palais-Smale sequence for $\I_{\e}$ at level $c_{\e}$.
It is easy to check that $\{u_{n}\}$ is bounded in $\X_{\e}$ and we denote by $u$ its weak limit. It is also easy to verify that $\I_{\e}'(u)=0$.

When $V_{\infty}=\infty$, by using Lemma \ref{lem6}, we have $\I_{\e}(u)=c_{\e}$ and $\I'_{\e}(u)=0$.\\
Now, we deal with the case $V_{\infty}<\infty$. In view of Proposition \ref{prop2.1} it is enough to show that $c_{\e}<d_{V_{\infty}}$ for small $\e$. Without loss of generality, we may suppose that
$$
V(0)=V_{0}=\inf_{x\in \R^{N}} V(x).
$$
Let $\mu\in \R$ be such that $\mu\in (V_{0}, V_{\infty})$. Clearly, $d_{V_{0}}<d_{\mu}<d_{V_{\infty}}$. 
Let us prove that there exists a function $w\in \Y_{\mu}$ with compact support such that
\begin{align}\label{D1}
\J_{\mu}(w) =\max_{t\geq 0} \J_{\mu}(tw) \quad \mbox{ and }\quad \J_{\mu}(w)<d_{V_{\infty}}.
\end{align}
Let $\psi\in C^{\infty}_{c}(\R^{N}, [0, 1])$ be such that $\psi=1$ in $B_{1}(0)$ and $\psi=2$ in $\R^{N}\setminus B_{2}(0)$. For any $R>0$, we set $\psi_{R}(x)= \psi(\frac{x}{R})$. We consider the function $w_{R}(x)= \psi_{R}(x)w^{\mu}(x)$, where $w^{\mu}$ is a ground state solution to \eqref{Pmu}. By the dominated convergence theorem we can see that
\begin{align}\label{FS1}
\lim_{R\ri \infty} \|w_{R}-w^{\mu}\|_{1,p} + \|w_{R}-w^{\mu}\|_{1,q} =0.
\end{align}
Let $t_{R}>0$ be such that $\J_{\mu}(t_{R}w_{R})= \max_{t\geq 0} \J_{\mu}(tw_{R})$. Then, $t_{R}w_{R}\in \M_{\mu}$.
Now there exists $\bar{r}>0$ such that $\J_{\mu}(t_{\bar{r}}w_{\bar{r}})<d_{V_{\infty}}$. Indeed, if $\J_{\mu}(t_{R}w_{R}) \geq d_{V_{\infty}}$ for any $R>0$, using $t_{R}w_{R}\in \M_{\mu}$, \eqref{FS1} and $w^{\mu}$ is a ground state, we can deduce that $t_{R}\ri 1$ and
\begin{align*}
d_{V_{\infty}}\leq \liminf_{R\ri \infty} \J_{\mu}(t_{R}w_{R}) = \J_{\mu}(w^{\mu})=d_{\mu}<d_{V_{\infty}},
\end{align*}
which gives a contradiction. Then, taking $w=\psi_{\bar{r}}w^{\mu}$, we can conclude that \eqref{D1} holds.

Now, by \eqref{V0}, we obtain that for some $\bar{\e}>0$
\begin{align}\label{D2}
V(\e x)\leq \mu \quad \mbox{ for all } x\in \supp w \mbox{ and } \e \in(0, \bar{\e}).
\end{align}
Then, in the light of \eqref{D1} and \eqref{D2}, we have for all  $\e\in (0,  \bar{\e})$
\begin{align*}
\max_{t> 0} \I_{\e}(tw)\leq \max_{t>0}\J_{\mu}(tw)= \J_{\mu}(w)<d_{V_{\infty}}.
\end{align*}
It follows from \eqref{Nguyen1} that $c_{\e}<d_{V_{\infty}}$ for all $\e\in (0,  \bar{\e})$. 
\end{proof}

\section{Multiple solutions for \eqref{P}}\label{Sect6}
This section is devoted to the study of the multiplicity of solutions to \eqref{P}. We begin by proving the following result which will be needed to implement the barycenter machinery.
\begin{proposition}\label{prop4.1}
Let $\e_{n}\rightarrow 0$ and $\{u_{n}\}\subset \N_{\e_{n}}$ be such that $\I_{\e_{n}}(u_{n})\rightarrow d_{V_{0}}$. Then there exists $\{\tilde{y}_{n}\}\subset \R^{N}$ such that the translated sequence
\begin{equation*}
v_{n}(x)=u_{n}(x+ \tilde{y}_{n})
\end{equation*}
has a subsequence which converges in $\Y_{V_{0}}$. Moreover, up to a subsequence, $\{y_{n}\}=\{\e_{n}\tilde{y}_{n}\}$ is such that $y_{n}\rightarrow y\in M$.
\end{proposition}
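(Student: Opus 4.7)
The argument is a concentration-compactness scheme on the Nehari manifold, built around the strict monotonicity $\mu_{1}<\mu_{2}\Rightarrow d_{\mu_{1}}<d_{\mu_{2}}$ of the autonomous ground-state level (a consequence of Lemma \ref{lem4.3} and the minimax formula \eqref{Nguyen2}, obtained by testing with a ground state of the smaller potential).

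First I would establish boundedness of $\{u_{n}\}$ in $\Y_{V_{0}}$. Since $u_{n}\in \N_{\e_{n}}$ and $V(\e_{n} x)\ge V_{0}$, condition $(f_{4})$ gives
$$d_{V_{0}}+o_{n}(1)=\I_{\e_{n}}(u_{n})-\tfrac{1}{\vartheta}\langle \I'_{\e_{n}}(u_{n}),u_{n}\rangle \ge \left(\tfrac{1}{q}-\tfrac{1}{\vartheta}\right)\bigl(\|u_{n}\|_{V_{0},p}^{p}+\|u_{n}\|_{V_{0},q}^{q}\bigr),$$
and the case analysis from Lemma \ref{lemB}$(i)$ yields the claim. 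Next, I would argue that $\{u_{n}\}$ cannot vanish: otherwise Lemma \ref{Lions} combined with $(f_{2})$--$(f_{3})$ would force $\int f(u_{n})u_{n}\, dx\to 0$, contradicting the Nehari identity $\|u_{n}\|_{V,p}^{p}+\|u_{n}\|_{V,q}^{q}=\int f(u_{n})u_{n}\, dx$ together with the lower bound $\I_{\e_{n}}(u_{n})\ge c_{\e_{n}}\ge\rho>0$. Thus there exist $\tilde y_{n}\in\R^{N}$ and $R,\beta>0$ with $\int_{B_{R}(\tilde y_{n})}|u_{n}|^{q}\, dx\ge \beta$, and setting $v_{n}(x):=u_{n}(x+\tilde y_{n})$ I pass to a weak limit $v_{n}\rightharpoonup v$ in $\Y_{V_{0}}$ with $v\neq 0$ (using the local compactness from Lemma \ref{embedding}).

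The main step will be to show that $y_{n}:=\e_{n}\tilde y_{n}$ is bounded and every limit lies in $M$. By Ekeland's variational principle applied to $\Psi_{\e_{n}}$ on $\mathbb{S}_{\e_{n}}$ (Lemmas \ref{SW2}--\ref{SW3}), I may assume without loss of generality that $\I'_{\e_{n}}(u_{n})\to 0$. Suppose by contradiction that $|y_{n}|\to\infty$ along a subsequence. If $V_{\infty}<\infty$, then $V(\e_{n}x+y_{n})\to V_{\infty}$ a.e.\ in $\R^{N}$, so passing to the limit in the equation for $v_{n}$ (controlling the nonlinear term via Lemma \ref{lem7}$(iv)$ and the quasilinear terms via Lemma \ref{lemVince}, which is essential since $f$ is merely continuous) would yield $v\in \M_{V_{\infty}}$, and by Fatou
$$d_{V_{0}}=\lim_{n\to\infty}\Bigl[\I_{\e_{n}}(u_{n})-\tfrac{1}{q}\langle\I'_{\e_{n}}(u_{n}),u_{n}\rangle\Bigr]\ge \J_{V_{\infty}}(v)-\tfrac{1}{q}\langle \J'_{V_{\infty}}(v),v\rangle\ge d_{V_{\infty}},$$
contradicting $d_{V_{0}}<d_{V_{\infty}}$. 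The case $V_{\infty}=\infty$ is excluded directly, since Fatou applied to $\int V(\e_{n}x)|u_{n}|^{q}\,dx$ would give an infinite limit, contradicting the boundedness of $\{u_{n}\}$ in $\X_{\e_{n}}$. Hence $y_{n}\to y$ along a subsequence; the same Fatou argument with $V(y)$ in place of $V_{\infty}$ then gives $v\in \M_{V(y)}$ and $d_{V_{0}}\ge d_{V(y)}$, and the strict monotonicity of $\mu\mapsto d_{\mu}$ together with $V(y)\ge V_{0}$ will force $V(y)=V_{0}$, i.e.\ $y\in M$.

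Finally, for the strong convergence $v_{n}\to v$ in $\Y_{V_{0}}$, I would observe that at the limit $V(y)=V_{0}$ the Fatou inequality becomes equality; since $\tfrac{1}{q}f(t)t-F(t)\ge 0$ by $(f_{4})$ and each term is weakly lower semicontinuous, this forces $\|v_{n}\|_{V_{0},p}^{p}\to \|v\|_{V_{0},p}^{p}$ as well as $\int_{\R^{N}}\bigl(\tfrac{1}{q}f(v_{n})v_{n}-F(v_{n})\bigr)\,dx\to \int_{\R^{N}}\bigl(\tfrac{1}{q}f(v)v-F(v)\bigr)\,dx$. The Nehari identity for $v\in \M_{V_{0}}$, combined with Brezis--Lieb (Lemma \ref{lem7}$(i)$--$(ii)$), then yields $\|v_{n}\|_{V_{0},q}^{q}\to \|v\|_{V_{0},q}^{q}$, and uniform convexity of $\Y_{V_{0}}$ upgrades weak convergence to strong. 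The hard part will be the passage to the limit in the quasilinear equation for $v_{n}$ when $f$ is only continuous, which relies crucially on the splitting in Lemmas \ref{lemVince} and \ref{lem7}$(iv)$.
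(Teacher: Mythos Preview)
Your overall concentration--compactness scheme is reasonable, but the route you take diverges from the paper's at the crucial step, and it has a real gap.

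The paper never reduces to a Palais--Smale sequence and never passes to the limit in the equation. Instead it \emph{projects} $v_{n}$ onto the autonomous Nehari manifold $\M_{V_{0}}$ via the fibering map: choosing $t_{n}>0$ with $w_{n}=t_{n}v_{n}\in\M_{V_{0}}$, translation invariance of $\J_{V_{0}}$ together with $V\geq V_{0}$ and the fact that $u_{n}\in\N_{\e_{n}}$ maximizes $t\mapsto\I_{\e_{n}}(tu_{n})$ give the chain
\[
d_{V_{0}}\leq \J_{V_{0}}(w_{n})=\J_{V_{0}}(t_{n}u_{n})\leq \I_{\e_{n}}(t_{n}u_{n})\leq \I_{\e_{n}}(u_{n})\to d_{V_{0}}.
\]
So $\{w_{n}\}$ is a minimizing sequence for $\J_{V_{0}}$ on $\M_{V_{0}}$ with nontrivial weak limit, and strong convergence $w_{n}\to w$ (hence $v_{n}\to v$) is obtained \emph{before} analysing $y_{n}$. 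Then, assuming $|y_{n}|\to\infty$ with $V_{\infty}<\infty$, Fatou and $\liminf V(\e_{n}x+y_{n})\geq V_{\infty}$ applied to the \emph{already strongly convergent} $w_{n}$ give $d_{V_{0}}=\J_{V_{0}}(w)<\J_{V_{\infty}}(w)\leq\liminf \I_{\e_{n}}(t_{n}u_{n})\leq d_{V_{0}}$.

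Your argument instead needs $v\in\M_{V_{\infty}}$ to conclude $\J_{V_{\infty}}(v)-\tfrac{1}{q}\langle\J'_{V_{\infty}}(v),v\rangle\geq d_{V_{\infty}}$, and you try to get this by passing to the limit in the weak equation. Two things go wrong here. First, hypothesis \eqref{V0} gives only $\liminf_{|x|\to\infty}V(x)=V_{\infty}$, not a limit; so $V(\e_{n}x+y_{n})$ need not converge to $V_{\infty}$ (and $V$ is not assumed bounded), and the limit equation you write down is not justified. Second, Lemma~\ref{lemVince}, which you invoke for the quasilinear terms, requires $\nabla v_{n}\to\nabla v$ a.e., which is not a consequence of weak convergence alone; for $p$-Laplacian problems this typically comes from the $(PS)$ property itself, so you are arguing in a circle. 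The Ekeland ``without loss of generality'' step is also delicate: it produces a \emph{different} sequence $\hat u_{n}\in\N_{\e_{n}}$, and transferring the conclusion back to the given $u_{n}$ requires uniform control on the maps $m_{\e_{n}}$ across the varying norms $\|\cdot\|_{\e_{n}}$, which you have not addressed.

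In short, the fibering projection onto $\M_{V_{0}}$ is the key device you are missing; it uses only the Nehari constraint and the inequality $V\geq V_{0}$, and it delivers strong convergence first, after which the contradiction for $|y_{n}|\to\infty$ needs only a one-sided Fatou bound rather than any limit equation.
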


\begin{proof}
Since $\langle \I'_{\e_{n}}(u_{n}), u_{n} \rangle=0$ and $\I_{\e_{n}}(u_{n})\rightarrow d_{V_{0}}$, we know that $\{u_{n}\}$ is bounded in $\X_{\e}$. Since $d_{V_{0}}>0$, we can infer that $\|u_{n}\|_{\e_{n}}\not \rightarrow 0$.
Therefore, as in the proof of Lemma \ref{lem2.2}, we can find a sequence $\{\tilde{y}_{n}\}\subset \R^{N}$ and constants $R, \beta>0$ such that
\begin{equation}\label{tv21}
\liminf_{n\rightarrow \infty}\int_{B_{R}(\tilde{y}_{n})} |u_{n}|^{q} \,dx\geq \beta.
\end{equation}
Let us define
\begin{equation*}
v_{n}(x)=u_{n}(x+ \tilde{y}_{n}).
\end{equation*}
In view of the boundedness of $\{u_{n}\}$ and \eqref{tv21}, we may assume that $v_{n}\rightharpoonup v$ in $\Y_{V_{0}}$ for some $v\neq 0$.
Let $\{t_{n}\}\subset (0, \infty)$ be such that $w_{n}=t_{n} v_{n}\in \M_{V_{0}}$, and we set $y_{n}=\e_{n}\tilde{y}_{n}$.  \\
Thus, by using the change of variables $z\mapsto x+ \tilde{y}_{n}$, $V(x)\geq V_{0}$ and the invariance by translation, we can see that
\begin{align*}
d_{V_{0}}\leq \J_{V_{0}}(w_{n})\leq \I_{\e_{n}}(t_{n} v_{n})\leq \I_{\e_{n}}(u_{n})=d_{V_{0}}+ o_{n}(1).
\end{align*}
Hence we can infer $\J_{V_{0}}(w_{n})\rightarrow d_{V_{0}}$. This fact and $\{w_{n}\}\subset \M_{V_{0}}$  imply that there exists $K>0$ such that $\|w_{n}\|_{V_{0}}\leq K$ for all $n\in \mathbb{N}$.
Moreover, we can prove that the sequence $\{t_{n}\}$ is bounded in $\R$. In fact, $v_{n}\not \rightarrow 0$ in $\Y_{V_{0}}$, so there exists $\alpha>0$ such that $\|v_{n}\|_{V_{0}}\geq \alpha$. Consequently, for all $n\in \mathbb{N}$, we have
$$
|t_{n}|\alpha\leq \|t_{n}v_{n}\|_{V_{0}}=\|w_{n}\|_{V_{0}}\leq K,
$$
which yields $|t_{n}|\leq \frac{K}{\alpha}$ for all $n\in \mathbb{N}$.
Therefore, up to a subsequence, we may suppose that $t_{n}\rightarrow t_{0}\geq 0$. Let us show that $t_{0}>0$. Otherwise, if $t_{0}=0$, by the boundedness of $\{v_{n}\}$, we get $w_{n}= t_{n}v_{n} \rightarrow 0$ in $\Y_{V_{0}}$, that is $\J_{V_{0}}(w_{n})\rightarrow 0$ which is in contrast with the fact $d_{V_{0}}>0$. Thus $t_{0}>0$ and, up to a subsequence, we may assume that $w_{n}\rightharpoonup w= t_{0} v\neq 0$ in $\Y_{V_{0}}$. \\
Therefore
\begin{equation*}
\J_{V_{0}}(w_{n})\rightarrow d_{V_{0}} \quad \mbox{ and } \quad w_{n}\rightharpoonup w\neq 0 \mbox{ in } \Y_{V_{0}}.
\end{equation*}
From Lemma \ref{lem4.3}, we can deduce that $w_{n} \rightarrow w$ in $\Y_{V_{0}}$, that is $v_{n}\rightarrow v$ in $\Y_{V_{0}}$. \\
Now, we show that $\{y_{n}\}$ has a subsequence satisfying $y_{n}\rightarrow y\in M$. First, we prove that $\{y_{n}\}$ is bounded in $\R^{N}$.
Assume by contradiction that $\{y_{n}\}$ is not bounded, that is there exists a subsequence, still denoted by $\{y_{n}\}$, such that $|y_{n}|\rightarrow \infty$. \\
First, we deal with the case $V_{\infty}=\infty$. 
By using $\{u_{n}\}\subset \N_{\e_{n}}$ and by changing the variable, we can see that
\begin{align*}
&\int_{\R^{N}} V(\e_{n}x+y_{n})(|v_{n}|^{p} + |v_{n}|^{q}) dx \\
&\leq |\nabla v_{n}|^{p}_{p}+ |\nabla v_{n}|_{q}^{q} +\int_{\R^{N}} V(\e_{n}x+y_{n})(|v_{n}|^{p} + |v_{n}|^{q}) dx\\
&=\int_{\R^{N}} f(u_{n})u_{n} \, dx = \int_{\R^{N}} f(v_{n})v_{n} \, dx.
\end{align*}
By applying Fatou's lemma and $v_{n}\rightarrow v$ in $\Y_{V_{0}}$, we deduce that
\begin{align*}
\infty=\liminf_{n\rightarrow \infty} \int_{\R^{N}}  V(\e_{n}x+y_{n})(|v_{n}|^{p}+|v_{n}|^{q}) dx \leq \liminf_{n\rightarrow \infty} \int_{\R^{N}}  f(v_{n})v_{n} dx =\int_{\R^{N}}  f(v) v \,dx <\infty,
\end{align*}
which gives a contradiction. \\
Let us consider the case $V_{\infty}<\infty$. 
Taking into account that $w_{n}\rightarrow w$ strongly converges in $\Y_{V_{0}}$, condition \eqref{V0} and using the change of variable $z=x+ \tilde{y}_{n}$, we have
\begin{align}\label{tv22}
d_{V_{0}}&= \J_{V_{0}}(w) < \J_{V_{\infty}} (w) \nonumber\\
&\leq \liminf_{n\rightarrow \infty} \left [ \frac{1}{p} |\nabla w_{n}|^{p}_{p} + \frac{1}{q}|\nabla w_{n}|_{q}^{q}+ \int_{\R^{N}} V(\e_{n} x+y_{n})\left(\frac{1}{p}|w_{n}|^{p} + \frac{1}{q} |w_{n}|^{q}\right)\,dx-\int_{\R^{N}} F(w_{n}) \,dx \right]\nonumber \\
&=\liminf_{n\rightarrow \infty} \left[\frac{t^{p}_{n}}{p} |\nabla u_{n}|^{p}_{p} + \frac{t^{q}_{n}}{q} |\nabla u_{n}|^{q}_{q}+\int_{\R^{N}} V(\e_{n} z) \left(\frac{t^{p}_{n}}{p} |u_{n}|^{p}+ \frac{t^{q}_{n}}{q} |u_{n}|^{q}\right) dz-\int_{\R^{N}} F(t_{n} u_{n}) \, dz\right] \nonumber \\
&= \liminf_{n\rightarrow \infty} \I_{\e_{n}}(t_{n}u_{n}) \leq \liminf_{n\rightarrow \infty} \I_{\e_{n}} (u_{n})=d_{V_{0}}
\end{align}
which is a contradiction.
Thus $\{y_{n}\}$ is bounded and, up to a subsequence, we may assume that $y_{n}\rightarrow y$. If $y\notin M$, then $V_{0}<V(y)$ and we can argue as in \eqref{tv22} to get a contradiction. Therefore, we can conclude that $y\in M$.
\end{proof}

\noindent
Let $\delta>0$ be fixed and let $\psi\in C^{\infty}([0, \infty), [0, 1])$ be a nonincreasing function such that $\psi=1$ in $[0, \frac{\delta}{2}]$, $\psi=0$ in $[\delta, \infty)$ and $|\psi'|\leq C$ for some $C>0$. For any $y\in M$, we define
$$
\Upsilon_{\e, y}(x)=\psi(|\e x-y|) \omega\left(\frac{\e x-y}{\e}\right),
$$
where $\omega\in \mathbb{X}_{V_{0}}$ is a ground state solution to $(AP_{V_{0}})$ which exists by virtue of Lemma \ref{lem4.3}.

Let $t_{\e}>0$ be the unique positive number such that
$$
\I_{\e}(t_{\e}\Upsilon_{\e, y})=\max_{t\geq 0} \, \I_{\e}(t \Upsilon_{\e, y}).
$$
Define the map $\Phi_{\e}:M\rightarrow \N_{\e}$ by setting $\Phi_{\e}(y):=t_{\e} \Upsilon_{\e, y}$.
Then we can prove that
\begin{lemma}\label{lem4.1}
The functional $\Phi_{\e}$ satisfies the following limit
\begin{equation}\label{3.2}
\lim_{\e\rightarrow 0} \I_{\e}(\Phi_{\e}(y))=d_{V_{0}} \mbox{ uniformly in } y\in M.
\end{equation}
\end{lemma}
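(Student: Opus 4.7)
The plan is to argue by contradiction. Assume the lemma fails: then there exist $\delta_{0}>0$, $\e_{n}\downarrow 0$ and $y_{n}\in M$ such that $|\I_{\e_{n}}(\Phi_{\e_{n}}(y_{n}))-d_{V_{0}}|\geq \delta_{0}$ for all $n$. From condition $(V)$, $V$ is continuous with $V_{0}<\liminf_{|x|\to\infty}V(x)$, so the set $M=\{V=V_{0}\}$ is closed and bounded, hence compact; passing to a subsequence, $y_{n}\to y_{*}\in M$. This reduces the statement to a subsequential limit identity.

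First I would perform the change of variable $z=x-y_{n}/\e_{n}$, which converts the support condition $|\e x-y_{n}|\leq \delta$ into $\e_{n}|z|\leq \delta$, and define
\begin{equation*}
v_{n}(z):=\Upsilon_{\e_{n},y_{n}}\!\left(z+\tfrac{y_{n}}{\e_{n}}\right)=\psi(\e_{n}|z|)\,\omega(z).
\end{equation*}
Since $\omega$ decays exponentially (Lemma \ref{lem4.3}) and $\psi(\e_{n}|z|)\to 1$ pointwise with $|\psi|\leq 1$, the dominated convergence theorem yields $v_{n}\to \omega$ in $W^{1,p}(\R^{N})\cap W^{1,q}(\R^{N})$. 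Furthermore, for $z$ in the support of $v_{n}$ we have $|\e_{n}z|\leq \delta$, so $\e_{n}z+y_{n}$ remains in a fixed bounded set on which $V$ is uniformly bounded; together with $\e_{n}z+y_{n}\to y_{*}$ and $V(y_{*})=V_{0}$, this will feed every integral involving $V$ into the dominated convergence theorem.

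Next I would control $t_{\e_{n}}$. Rewriting the Nehari identity $\Phi_{\e_{n}}(y_{n})\in\N_{\e_{n}}$ in the $z$-variable gives
\begin{equation*}
t_{\e_{n}}^{p}\bigl(|\nabla v_{n}|_{p}^{p}+\!\int_{\R^{N}}\!V(\e_{n}z+y_{n})|v_{n}|^{p}dz\bigr)+t_{\e_{n}}^{q}\bigl(|\nabla v_{n}|_{q}^{q}+\!\int_{\R^{N}}\!V(\e_{n}z+y_{n})|v_{n}|^{q}dz\bigr)=\!\int_{\R^{N}}\!f(t_{\e_{n}}v_{n})\,t_{\e_{n}}v_{n}\,dz.
\end{equation*}
Since $\|v_{n}\|_{\e_{n}}$ is bounded above and below (as $v_{n}\to\omega\not\equiv 0$), Lemma \ref{SW2}$(ii)$ gives a positive lower bound for $t_{\e_{n}}$; if along a subsequence $t_{\e_{n}}\to\infty$, then $(f_{4})$, which forces $f(t)t\geq C t^{\vartheta}$ for $t$ large with $\vartheta>q$, combined with the fact that $v_{n}\to \omega\neq 0$ on a set of positive measure, makes the right-hand side dominate the left-hand side (bounded by $C(t_{\e_{n}}^{p}+t_{\e_{n}}^{q})$), a contradiction. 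Thus $t_{\e_{n}}\to t_{0}>0$ along a subsequence.

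Passing to the limit in the Nehari identity by dominated convergence shows $t_{0}\omega\in\M_{V_{0}}$; since $\omega\in\M_{V_{0}}$ and the fibering map $t\mapsto \J_{V_{0}}(t\omega)$ has a unique positive critical point (Lemma \ref{SW2A}$(i)$, guaranteed by $p<q$ and $(f_{5})$), we conclude $t_{0}=1$. Finally, computing $\I_{\e_{n}}(\Phi_{\e_{n}}(y_{n}))$ in the $z$-variable and again invoking dominated convergence, each term converges to the corresponding term of $\J_{V_{0}}(\omega)$, yielding $\I_{\e_{n}}(\Phi_{\e_{n}}(y_{n}))\to \J_{V_{0}}(\omega)=d_{V_{0}}$, contradicting the initial assumption. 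The main obstacle is the bookkeeping for $t_{\e_{n}}\to 1$: upper and lower bounds must be shown independently of $y\in M$, and the limit passage in $\int V(\e_{n}z+y_{n})|v_{n}|^{t}dz$ must use that the support of $v_{n}$ forces $\e_{n}z+y_{n}$ into a fixed compact set (so $V$ has a uniform majorant), which is precisely where the cut-off $\psi$ and the exponential decay of $\omega$ are essential.
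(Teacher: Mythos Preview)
Your proof is correct and follows essentially the same contradiction scheme as the paper: change variables to center at $y_{n}/\e_{n}$, show $v_{n}\to\omega$ by dominated convergence, establish $t_{\e_{n}}\to 1$, and pass to the limit in $\I_{\e_{n}}$. The only tactical differences are that you obtain the upper bound on $t_{\e_{n}}$ via the growth estimate from $(f_{4})$ whereas the paper uses the monotonicity $(f_{5})$ on a small ball, and you conclude $t_{0}=1$ by invoking the uniqueness in Lemma~\ref{SW2A}(i) rather than subtracting the two Nehari identities directly; both routes are equivalent here. Your additional remark that $M$ is compact so $y_{n}\to y_{*}\in M$ along a subsequence is not strictly needed (the paper never isolates a limit point of $\{y_{n}\}$, since boundedness of $M$ already confines $\e_{n}z+y_{n}$ to a fixed compact set on which $V$ is bounded, and $V(\e_{n}z+y_{n})\to V_{0}$ follows from continuity of $V$ and $y_{n}\in M$), but it does no harm.
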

\begin{proof}
Assume by contradiction that there exist $\delta_{0}>0$, $\{y_{n}\}\subset M$ and $\e_{n}\rightarrow 0$ such that
\begin{equation}\label{4.41}
|\I_{\e_{n}}(\Phi_{\e_{n}} (y_{n}))-d_{V_{0}}|\geq \delta_{0}.
\end{equation}
Let us observe that the dominated convergence theorem implies
\begin{align}\label{4.421}
|\nabla \Upsilon_{\e_{n}, y_{n}}|_{p}^{p} + \int_{\R^{N}} V(\e_{n}x) |\Upsilon_{\e_{n}, y_{n}}|^{p} \, dx \ri |\nabla \omega|_{p}^{p} +\int_{\R^{N}} V_{0}|\omega|^{p}\, dx
\end{align}
and
\begin{align}\label{4.422}
|\nabla \Upsilon_{\e_{n}, y_{n}}|_{q}^{q} + \int_{\R^{N}} V(\e_{n}x) |\Upsilon_{\e_{n}, y_{n}}|^{q} \, dx \ri |\nabla \omega|_{q}^{q} +\int_{\R^{N}} V_{0}|\omega|^{q}\, dx.
\end{align}
Since $\langle \I'_{\e_{n}}(t_{\e_{n}}\Upsilon_{\e_{n}, y_{n}}), t_{\e_{n}} \Upsilon_{\e_{n}, y_{n}}\rangle=0$,
we can use the change of variable ${z=\frac{\e_{n}x-y_{n}}{\e_{n}}}$ to see that
\begin{align}\label{4.411}
&t_{\e_{n}}^{p}|\nabla\Upsilon_{\e_{n}, y_{n}}|_{p}^{p}+ t^{q}_{\e_{n}} |\nabla \Upsilon_{\e_{n}, y_{n}}|_{q}^{q} + \int_{\R^{N}} V(\e_{n}x) \left(  |t_{\e_{n}}\Upsilon_{\e_{n}, y_{n}}|^{p} +|t_{\e_{n}}\Upsilon_{\e_{n}, y_{n}}|^{q}\right) \, dx\nonumber \\
&=\int_{\R^{N}} f(t_{\e_{n}}\Upsilon_{\e_{n}}) t_{\e_{n}}\Upsilon_{\e_{n}} dx \nonumber\\
&=\int_{\R^{N}} f(t_{\e_{n}} \psi(|\e_{n}z|) \omega(z)) t_{\e_{n}} \psi(|\e_{n}z|) \omega(z) \, dz.
\end{align}
Now, we prove that $t_{\e_{n}}\rightarrow 1$. First we show that $t_{\e_{n}}\rightarrow t_{0}<\infty$. Assume by contradiction that $|t_{\e_{n}}|\rightarrow \infty$. Then, using the fact that $\psi(|x|)=1$ for $x\in B_{\frac{\delta}{2}}(0)$ and that $B_{\frac{\delta}{2}}(0)\subset B_{\frac{\delta}{2\e_{n}}}(0)$ for $n$ sufficiently large, we can see that \eqref{4.411} and $(f_5)$ imply
\begin{align}\label{4.44}
&t_{\e_{n}}^{p-q}|\nabla\Upsilon_{\e_{n}, y_{n}}|_{p}^{p}+ |\nabla \Upsilon_{\e_{n}, y_{n}}|_{q}^{q} + \int_{\R^{N}} V(\e_{n}x) \left(  t_{\e_{n}}^{p-q}|\Upsilon_{\e_{n}, y_{n}}|^{p} +|\Upsilon_{\e_{n}, y_{n}}|^{q}\right) \, dx\nonumber \\
&\quad \geq \int_{B_{\frac{\delta}{2}}(0)} \frac{f(t_{\e_{n}} \omega(z))}{(t_{\e_{n}} \omega(z))^{q-1}} (\omega(z))^{q} dz \geq \frac{f(t_{\e_{n}} \omega(\bar{z}))}{(t_{\e_{n}} \omega(\bar{z}))^{q-1}} \int_{B_{\frac{\delta}{2}}(0)} (\omega(z))^{q}dz
\end{align}
where $\bar{z}\in \R^{N}$ is such that $\omega(\bar{z})=\min\{\omega(z): |z|\leq \frac{\delta}{2}\}>0$ (note that $\omega\in C(\R^{N})$ and $\omega>0$ in $\R^{N}$).
Putting together $(f_4)$, $p<q$, $t_{\e_{n}}\rightarrow \infty$, \eqref{4.421} and \eqref{4.422}, we can see that \eqref{4.44} implies that $\|\Upsilon_{\e_{n}, y_{n}}\|_{V, q}^{q}\rightarrow \infty$, which gives a contradiction.
Therefore, up to a subsequence, we may assume that $t_{\e_{n}}\rightarrow t_{0}\geq 0$.
If $t_{0}=0$, we can use \eqref{4.421}, \eqref{4.422}, \eqref{4.411}, $p<q$ and $(f_2)$, to get
$$
\|\Upsilon_{\e_{n}, y_{n}}\|_{V, p}^{p}\rightarrow 0,
$$
which is a contradiction. Hence, $t_{0}>0$.
Now, we show that $t_{0}=1$.
Letting $n\rightarrow \infty$ in \eqref{4.411}, we can see that
\begin{align}\label{TVV1}
t_{0}^{p-q}|\nabla \omega|^{p}_{p}+ |\nabla \omega|^{q}_{q}+\int_{\R^{N}} V_{0} (t_{0}^{p-q}\omega^{p} dx + \omega^{q}) \, dx =\int_{\R^{N}} \frac{f(t_{0}\omega)}{(t_{0}\omega)^{q-1}} \omega^{q} \, dx.
\end{align}
Since $\omega\in  \M_{V_{0}}$ we have
\begin{align}\label{TVV2}
|\nabla \omega|^{p}_{p}+ |\nabla \omega|^{q}_{q}+\int_{\R^{N}} V_{0} (\omega^{p} dx + \omega^{q}) \, dx =\int_{\R^{N}} f(\omega)\omega\, dx.
\end{align}
Putting together \eqref{TVV1} and \eqref{TVV2}, we find
\begin{align}\label{TVV1}
(t_{0}^{p-q}-1)|\nabla \omega|^{p}_{p}+(t_{0}^{p-q}-1) \int_{\R^{N}} V_{0} \omega^{p}\, dx =\int_{\R^{N}} \left(\frac{f(t_{0}\omega)}{(t_{0}\omega)^{q-1}}- \frac{f(\omega)}{\omega^{q-1}}\right) \omega^{q}\, dx.
\end{align}
By $(f_{5})$, we can deduce that $t_{0}=1$.
This fact and the dominated convergence theorem yield
\begin{equation}\label{4.45}
\lim_{n\rightarrow \infty}\int_{\R^{N}} F(t_{\e_{n}} \Upsilon_{\e_{n}, y_{n}})\, dx=\int_{\R^{N}} F(\omega)\, dx.
\end{equation}
Hence, taking the limit as $n\rightarrow \infty$ in
\begin{align*}
\I_{\e_{n}}(\Phi_{\e_{n}}(y_{n}))=& \frac{t_{\e_{n}}^{p}}{p}|\nabla \Upsilon_{\e_{n}, y_{n}}|_{p}^{p}+ \frac{t_{\e_{n}}^{q}}{q}|\nabla \Upsilon_{\e_{n}, y_{n}}|_{q}^{q} \\
&+ \int_{\R^{N}} V(\e_{n}x) \left( \frac{t_{\e_{n}}^{p}}{p} |\Upsilon_{\e_{n}, y_{n}}|^{p} + \frac{t_{\e_{n}}^{q}}{q} |\Upsilon_{\e_{n}, y_{n}}|^{q}\right) \, dx\\
&  -\int_{\R^{N}} F(t_{\e_{n}} \Upsilon_{\e_{n}, y_{n}}) \, dx
\end{align*}
and exploiting \eqref{4.421}, \eqref{4.422} and \eqref{4.45}, we can deduce that
$$
\lim_{n\rightarrow \infty} \I_{\e_{n}}(\Phi_{\e_{n}}(y_{n}))=\J_{V_{0}}(\omega)=d_{V_{0}}
$$
which is impossible in view of \eqref{4.41}.
\end{proof}

\noindent
Now, we are in the position to introduce the barycenter map. We take $\rho>0$ such that $M_{\delta}\subset B_{\rho}(0)$, and we set $\chi: \R^{N}\rightarrow \R^{N}$ as follows
 \begin{equation*}
\chi(x)=
 \left\{
 \begin{array}{ll}
 x &\mbox{ if } |x|<\rho, \\
 \frac{\rho x}{|x|} &\mbox{ if } |x|\geq \rho.
  \end{array}
 \right.
 \end{equation*}
We define the barycenter map $\beta_{\e}: \N_{\e}\rightarrow \R^{N}$ by
\begin{align*}
\beta_{\e}(u)=\frac{ {\int_{\R^{N}} \chi(\e x) \left(|u|^{p}+ |u|^{q}\right) \, dx}}{ {\int_{\R^{N}} \left(|u|^{p}+ |u|^{q}\right) \, dx}}.
\end{align*}

\begin{lemma}\label{lem4.2}
The functional $\Phi_{\e}$ verifies the following limit
\begin{equation}\label{3.3}
\lim_{\e \rightarrow 0} \, \beta_{\e}(\Phi_{\e}(y))=y \mbox{ uniformly in } y\in M.
\end{equation}
\end{lemma}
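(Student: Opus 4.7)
The plan is to argue by contradiction along a sequence, mimicking the strategy used in Lemma \ref{lem4.1}. Suppose the conclusion fails. Then there exist $\delta_0>0$, a sequence $\e_n \ri 0$, and $\{y_n\} \subset M$ such that
\begin{equation*}
|\beta_{\e_n}(\Phi_{\e_n}(y_n)) - y_n| \geq \delta_0 \quad \mbox{ for all } n\in \mathbb{N}.
\end{equation*}
Since $(V)$ forces $M$ to be bounded (because $V_\infty > V_0$), and $M$ is closed by continuity of $V$, $M$ is compact, so along a subsequence $y_n \ri y^* \in M \subset B_{\rho}(0)$.

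Next I would unfold the definition of $\beta_{\e_n}(\Phi_{\e_n}(y_n))$ and perform the change of variables $z = x - y_n/\e_n$, which gives $\e_n x = \e_n z + y_n$ and $\Upsilon_{\e_n, y_n}(x) = \psi(|\e_n z|)\omega(z)$. Since $y_n$ is a constant with respect to $z$, subtracting it inside the quotient yields
\begin{equation*}
\beta_{\e_n}(\Phi_{\e_n}(y_n)) - y_n = \frac{\int_{\R^{N}} [\chi(\e_n z + y_n) - y_n]\, G_n(z)\, dz}{\int_{\R^{N}} G_n(z)\, dz},
\end{equation*}
where $G_n(z) = t_{\e_n}^{p} \psi(|\e_n z|)^{p} \omega(z)^{p} + t_{\e_n}^{q} \psi(|\e_n z|)^{q} \omega(z)^{q}$. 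The scalars $t_{\e_n}$ are those coming from $\Phi_{\e_n}(y_n) \in \N_{\e_n}$; the very same argument used inside the proof of Lemma \ref{lem4.1} (relying on $(f_4)$, $(f_5)$ and the compactness of $\omega$) shows that $t_{\e_n} \to 1$ along our subsequence.

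To finish, I would apply the dominated convergence theorem to numerator and denominator separately. Pointwise, $\psi(|\e_n z|) \ri \psi(0) = 1$, and since $y^* \in M_\delta \subset B_\rho(0)$ we have $\chi(\e_n z + y_n) \ri \chi(y^*) = y^*$ for every $z$, so that $\chi(\e_n z + y_n) - y_n \ri 0$. The uniform integrable majorant is $C(\omega^{p} + \omega^{q}) \in L^{1}(\R^{N})$ (since $\omega \in \Y_{V_0}$, $|\psi|\leq 1$, $|\chi|\leq \rho$, and $t_{\e_n}$ is bounded). Thus the numerator tends to $0$ while the denominator tends to $|\omega|_{p}^{p} + |\omega|_{q}^{q} > 0$, giving $\beta_{\e_n}(\Phi_{\e_n}(y_n)) - y_n \ri 0$, contradicting the standing assumption.

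The only non-routine point is re-deriving (or citing) $t_{\e_n} \ri 1$: this was essentially proved inside Lemma \ref{lem4.1} by testing the Nehari identity against $\Upsilon_{\e_n, y_n}$, using $(f_5)$ to rule out $t_0 \neq 1$ and $(f_2)$ to rule out $t_0 = 0$, and the compactness of $M$ to ensure the ground state $\omega$ does not depend on $y_n$. That observation is the core input, and once it is in place the remaining estimates are clean dominated-convergence computations.
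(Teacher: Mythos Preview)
Your argument is correct and follows the same route as the paper's: contradiction along a sequence, the change of variable $z=x-y_{n}/\e_{n}$, and dominated convergence on numerator and denominator. One simplification worth noting: the paper does not invoke $t_{\e_{n}}\to 1$ at all here, because the numerator integrand $[\chi(\e_{n}z+y_{n})-y_{n}]\,G_{n}(z)$ tends to $0$ pointwise regardless of $t_{\e_{n}}$ (the bracket itself vanishes since $y_{n}\in M\subset B_{\rho}(0)$ forces $\chi(\e_{n}z+y_{n})=\e_{n}z+y_{n}$ for large $n$), and a uniform bound on $t_{\e_{n}}$ is enough for the majorant and for keeping the denominator away from zero --- so calling $t_{\e_{n}}\to 1$ the ``core input'' overstates its role, and the extraction of a convergent subsequence of $\{y_{n}\}$ is likewise unnecessary.
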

\begin{proof}
Suppose by contradiction that there exist $\delta_{0}>0$, $\{y_{n}\}\subset M$ and $\e_{n}\rightarrow 0$ such that
\begin{equation}\label{4.4}
|\beta_{\e_{n}}(\Phi_{\e_{n}}(y_{n}))-y_{n}|\geq \delta_{0}.
\end{equation}
Using the definitions of $\Phi_{\e_{n}}(y_{n})$, $\beta_{\e_{n}}$, $\psi$ and the change of variable $z= \frac{\e_{n} x-y_{n}}{\e_{n}}$, we can see that
$$
\beta_{\e_{n}}(\Phi_{\e_{n}}(y_{n}))=y_{n}+\frac{\int_{\R^{N}}[\chi(\e_{n}z+y_{n})-y_{n}] (|\psi (|\e_{n}z|) \omega(z)|^{p}+ |\psi(|\e_{n}z|) \omega(z)|^{q})\, dz}{\int_{\R^{N}} (|\psi(|\e_{n}z|) \omega(z)|^{p}+ |\psi(|\e_{n}z|) \omega(z)|^{q})\, dz}.
$$
Taking into account $\{y_{n}\}\subset M\subset B_{\rho}(0)$ and applying the dominated convergence theorem, we can infer that
$$
|\beta_{\e_{n}}(\Phi_{\e_{n}}(y_{n}))-y_{n}|=o_{n}(1)
$$
which contradicts (\ref{4.4}).
\end{proof}

\noindent
At this point, we introduce a subset $\widetilde{\N}_{\e}$ of $\N_{\e}$ by taking a function $h:\R_{+}\rightarrow \R_{+}$ such that $h(\e)\rightarrow 0$ as $\e \rightarrow 0$, and setting
$$
\widetilde{\N}_{\e}=\{u\in \N_{\e}: \I_{\e}(u)\leq d_{V_{0}}+h(\e)\},
$$
where $h(\e)=\sup_{y\in M} |\I_{\e}(\Phi_{\e}(y))-d_{V_{0}}|$. By Lemma \ref{lem4.1}, we know that $h(\e)\ri 0$ as $\e \rightarrow 0$. By definition of $h(\e)$, we can deduce that for all $y\in M$ and $\e>0$, $\Phi_{\e}(y)\in \widetilde{\N}_{\e}$ and $\widetilde{\N}_{\e}\neq \emptyset$. Moreover, we have the following lemma.

\begin{lemma}\label{lem4.4}
For any $\delta>0$, the following holds
$$
\lim_{\e \rightarrow 0} \sup_{u\in \widetilde{\mathcal{N}}_{\e}} dist(\beta_{\e}(u), M_{\delta})=0.
$$
\end{lemma}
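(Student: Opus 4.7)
I will argue by contradiction: assume there exist $\delta_{0}>0$, a sequence $\e_{n}\to 0$, and $u_{n}\in \widetilde{\N}_{\e_{n}}$ such that
\[
dist(\beta_{\e_{n}}(u_{n}),M_{\delta})\geq \delta_{0} \quad \text{for all } n\in \mathbb{N}.
\]
The argument then unfolds in three steps, and leverages Proposition \ref{prop4.1} as the main tool.

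\emph{Step 1: energy upgrade.} I first show that $\I_{\e_{n}}(u_{n})\to d_{V_{0}}$. The upper bound $\I_{\e_{n}}(u_{n})\leq d_{V_{0}}+h(\e_{n})$ is the definition of $\widetilde{\N}_{\e_{n}}$, and $h(\e_{n})\to 0$ by Lemma \ref{lem4.1}. For the matching lower bound, note that $V(\e_{n} x)\geq V_{0}$ gives $\I_{\e_{n}}(tu)\geq \J_{V_{0}}(tu)$ pointwise in $t\geq 0$ for every $u\in \X_{\e_{n}}\setminus\{0\}$, so the minimax formulas \eqref{Nguyen1} and \eqref{Nguyen2} yield $c_{\e_{n}}\geq d_{V_{0}}$, hence $\I_{\e_{n}}(u_{n})\geq c_{\e_{n}}\geq d_{V_{0}}$.

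\emph{Step 2: concentration.} Applying Proposition \ref{prop4.1} to $\{u_{n}\}$, I obtain (along a subsequence) points $\tilde{y}_{n}\in \R^{N}$ such that $v_{n}(\cdot):=u_{n}(\cdot+\tilde{y}_{n})$ converges strongly in $\Y_{V_{0}}$ to some $v\neq 0$, and $y_{n}:=\e_{n}\tilde{y}_{n}\to y$ with $y\in M$. In particular $v_{n}\to v$ in $L^{p}(\R^{N})\cap L^{q}(\R^{N})$, and $\int_{\R^{N}}(|v|^{p}+|v|^{q})\,dx>0$.

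\emph{Step 3: computing the barycenter in the limit.} After the change of variable $x=z+\tilde{y}_{n}$ in the definition of $\beta_{\e}$, I obtain
\[
\beta_{\e_{n}}(u_{n})=\frac{\int_{\R^{N}}\chi(\e_{n}z+y_{n})(|v_{n}|^{p}+|v_{n}|^{q})\,dz}{\int_{\R^{N}}(|v_{n}|^{p}+|v_{n}|^{q})\,dz}.
\]
Since $|\chi|\leq \rho$ on $\R^{N}$, since $\e_{n}z+y_{n}\to y$ pointwise with $y\in M\subset B_{\rho}(0)$ (so $\chi(y)=y$ by continuity), and since $|v_{n}|^{p}+|v_{n}|^{q}\to |v|^{p}+|v|^{q}$ in $L^{1}(\R^{N})$, the dominated convergence theorem applied separately to numerator and denominator gives $\beta_{\e_{n}}(u_{n})\to y\in M\subset M_{\delta}$. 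This contradicts $dist(\beta_{\e_{n}}(u_{n}),M_{\delta})\geq \delta_{0}$ and completes the proof.

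The essential content is packaged inside Proposition \ref{prop4.1}, which supplies both the translations $\tilde{y}_{n}$ along which the solutions concentrate and the strong convergence of the translates in $\Y_{V_{0}}$; once that concentration is available, the only nontrivial check is the energy pinch in Step 1, after which Step 3 is a routine dominated-convergence computation.
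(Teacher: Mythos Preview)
Your proof is correct and follows essentially the same route as the paper: pinch the energy to $d_{V_{0}}$ using $c_{\e_{n}}\geq d_{V_{0}}$ and the definition of $\widetilde{\N}_{\e_{n}}$, invoke Proposition~\ref{prop4.1} to obtain the centers $\tilde{y}_{n}$ and strong convergence of the translates, and then pass to the limit in the barycenter after the change of variable $x=z+\tilde{y}_{n}$. The only cosmetic difference is that the paper proceeds directly (picking near-maximizers of the supremum) and writes $\beta_{\e_{n}}(u_{n})=y_{n}+o_{n}(1)$, whereas you argue by contradiction and show $\beta_{\e_{n}}(u_{n})\to y\in M$; both conclusions are equivalent.
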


\begin{proof}
Let $\e_{n}\rightarrow 0$ as $n\rightarrow \infty$. For any $n\in \mathbb{N}$, there exists $\{u_{n}\}\subset \widetilde{\N}_{\e_{n}}$ such that
$$
\sup_{u\in \widetilde{\N}_{\e_{n}}} \inf_{y\in M_{\delta}}|\beta_{\e_{n}}(u)-y|=\inf_{y\in M_{\delta}}|\beta_{\e_{n}}(u_{n})-y|+o_{n}(1).
$$
Therefore, it suffices to prove that there exists $\{y_{n}\}\subset M_{\delta}$ such that
\begin{equation}\label{3.13}
\lim_{n\rightarrow \infty} |\beta_{\e_{n}}(u_{n})-y_{n}|=0.
\end{equation}
Thus, recalling that $\{u_{n}\}\subset  \widetilde{\N}_{\e_{n}}\subset  \N_{\e_{n}}$, we can deduce that
$$
d_{V_{0}}\leq c_{\e_{n}}\leq  \I_{\e_{n}}(u_{n})\leq d_{V_{0}}+h(\e_{n})
$$
which implies that $\I_{\e_{n}}(u_{n})\rightarrow d_{V_{0}}$. By Proposition \ref{prop4.1}, there exists $\{\tilde{y}_{n}\}\subset \R^{N}$ such that $y_{n}=\e_{n}\tilde{y}_{n}\in M_{\delta}$ for $n$ sufficiently large. Thus
$$
\beta_{\e_{n}}(u_{n})=y_{n}+\frac{ {\int_{\R^{N}}[\chi(\e_{n}z+y_{n})-y_{n}] (|u_{n}(z+\tilde{y}_{n})|^{p}+ |u_{n}(z+\tilde{y}_{n})|^{q}) \, dz}}{ {\int_{\R^{N}} (|u_{n}(z+\tilde{y}_{n})|^{p}+|u_{n}(z+\tilde{y}_{n})|^{q}) \, dz}}.
$$
Since $u_{n}(\cdot+\tilde{y}_{n})$ strongly converges in $\Y_{V_{0}}$ and $\e_{n}z+y_{n}\rightarrow y\in M$, we can deduce that $\beta_{\e_{n}}(u_{n})=y_{n}+o_{n}(1)$, that is (\ref{3.13}) holds.
\end{proof}

\noindent
Now we show that \eqref{Pe} admits at least $cat_{M_{\delta}}(M)$ solutions.
In order to achieve our aim, we recall the following result for critical points involving Lyusternik-Shnirel'man category. For more details one can see \cite{CL}.
\begin{theorem}\label{LSt}
Let $U$ be a $C^{1,1}$ complete Riemannian manifold (modelled on a Hilbert space). Assume that $h\in C^{1}(U, \R)$  is bounded from below and satisfies $-\infty<\inf_{U} h<d<k<\infty$. Moreover, suppose that $h$ satisfies the Palais-Smale condition on the sublevel $\{u\in U: h(u)\leq k\}$ and that $d$ is not a critical level for $h$. Then
$$
card\{u\in h^{d}: \nabla h(u)=0\}\geq cat_{h^{d}}(h^{d}), 
$$
where $h^{d}=\{u\in U \, : \, h(u)\leq d\}$. 
\end{theorem}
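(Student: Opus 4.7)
The plan is to apply the classical Lyusternik-Shnirel'man minimax scheme on sublevels of $h$. For each integer $n\geq 1$ I would introduce the minimax value
\begin{equation*}
c_{n}=\inf_{A\in \Gamma_{n}} \sup_{u\in A} h(u), \quad \Gamma_{n}=\{A\subset h^{d} : A \mbox{ closed and } cat_{h^{d}}(A)\geq n\},
\end{equation*}
and set $N=cat_{h^{d}}(h^{d})$. Since $h^{d}\in \Gamma_{N}$, the family $\Gamma_{n}$ is nonempty for every $1\leq n\leq N$, and hence $\inf_{U}h \leq c_{1}\leq \dots \leq c_{N}\leq d$.

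The heart of the argument consists in showing that each $c_{n}$ is a critical value of $h$, and that whenever $c_{n}=c_{n+1}=\dots= c_{n+j}=c$, the critical set $K_{c}=\{u\in U: h(u)=c,\, \nabla h(u)=0\}$ satisfies $cat_{h^{d}}(K_{c})\geq j+1$. This I would establish by a standard deformation argument: since $c\leq d<k$ and $h$ satisfies Palais-Smale on $h^{k}$, $K_{c}$ is compact and a locally Lipschitz pseudo-gradient vector field tangent to $U$ can be built, whose flow yields, for any prescribed open neighborhood $\N$ of $K_{c}$ and all sufficiently small $\e>0$, a deformation $\eta:U\to U$ with $\eta(h^{c+\e}\setminus \N)\subset h^{c-\e}$ and $\eta= id$ outside $h^{k}$. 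If one had $cat_{h^{d}}(K_{c})\leq j$, one could then shrink $\N$ so that also $cat_{h^{d}}(\overline{\N})\leq j$; choosing $A\in \Gamma_{n+j}$ with $\sup_{A}h<c+\e$ and using subadditivity together with monotonicity of the category, the set $B=\eta(\overline{A\setminus \N})$ would lie in $h^{c-\e}$ and still satisfy $cat_{h^{d}}(B)\geq n$, so that $B\in \Gamma_{n}$ and $\sup_{B}h<c=c_{n}$, contradicting the definition of $c_{n}$.

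Finally, since $d$ is not a critical value of $h$, we have $c_{N}<d$. Hence either the values $c_{1}<c_{2}<\dots<c_{N}$ are all distinct, yielding $N$ distinct critical points lying in $h^{d}$, or certain equalities hold among them, in which case the category estimate just established produces critical sets in $h^{d}$ whose total (set-theoretic) cardinality is again at least $N$. In both cases we conclude
\begin{equation*}
card\{u\in h^{d}: \nabla h(u)=0\}\geq N=cat_{h^{d}}(h^{d}).
\end{equation*}

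The main obstacle will be the construction of the deformation $\eta$ in the abstract setting of a $C^{1,1}$-Riemannian Hilbert manifold: one must assemble a locally Lipschitz pseudo-gradient field tangent to $U$, integrate the associated Cauchy problem globally in time (exploiting the Palais-Smale condition on $h^{k}$ to rule out escape from the relevant sublevel), and verify that arbitrarily small neighborhoods of $K_{c}$ can be flowed away while keeping the category under control. Once this deformation machinery is in place, the minimax argument above produces the desired count of critical points.
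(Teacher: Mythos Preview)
The paper does not prove this theorem at all: it is quoted as a known abstract result from \cite{CL} (Cingolani--Lazzo), and is used as a black box in the proof of Theorem~\ref{thmAI}. What you have written is the standard Lyusternik--Shnirel'man minimax argument that underlies this type of statement, and your outline is essentially correct: defining the minimax levels $c_n$ over closed sets of category at least $n$ in $h^d$, proving via a deformation lemma that each $c_n$ is critical and that multiplicity of levels forces higher category of the critical set, and concluding the count by distinguishing the cases of distinct versus repeated levels.

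One small point: in your final paragraph you infer $c_N<d$ from the assumption that $d$ is not a critical level. Strictly speaking, you first need that $c_N$ is critical (which you establish via the deformation argument) before this inference is valid; as stated the sentence reads as if $c_N<d$ were needed as an input, whereas in fact $c_N\leq d$ suffices to run the deformation (since Palais--Smale holds on $h^k$ with $k>d$), and the strict inequality is then a consequence. Also, in the repeated-level case the conclusion that the cardinality of critical points is at least $N$ uses that a set of category $\geq 2$ must be infinite; you should make this explicit. Otherwise your sketch matches the classical proof and would be an acceptable reconstruction of the cited result.
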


\noindent
With a view to apply Theorem \ref{LSt}, the following abstract lemma provides a very useful tool since relates the topology of some sublevel of a functional to the topology of some subset of the space $\R^{N}$; see \cite{CL}. 

\begin{lemma}\label{lemma2.2CL}
Let $\Omega, \Omega_{1}$ and $\Omega_{2}$ be closed sets with $\Omega_{1}\subset \Omega_{2}$ and let $\pi: \Omega \ri \Omega_{2}$, $\psi: \Omega_{1}\ri \Omega$ be continuous maps such that $\pi \circ \psi$ is homotopically equivalent to the embedding $j: \Omega_{1}\ri \Omega_{2}$. Then $cat_{\Omega}(\Omega)\geq cat_{\Omega_{2}}(\Omega_{1})$. 
\end{lemma}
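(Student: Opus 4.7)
The plan is to bootstrap from a categorical cover of $\Omega$ to one of $\Omega_{1}$ inside $\Omega_{2}$. First I would set $n=cat_{\Omega}(\Omega)$ and choose closed sets $A_{1},\dots,A_{n}$ covering $\Omega$, each contractible in $\Omega$; that is, for every $i$ there is a homotopy $h_{i}:A_{i}\times [0,1]\ri \Omega$ with $h_{i}(\cdot,0)=\mathrm{id}_{A_{i}}$ and $h_{i}(\cdot,1)\equiv p_{i}$ for some $p_{i}\in \Omega$. Define then $B_{i}=\psi^{-1}(A_{i})\subset \Omega_{1}$. Continuity of $\psi$ makes each $B_{i}$ closed in $\Omega_{1}$, while $\psi(\Omega_{1})\subset \Omega=\bigcup_{i}A_{i}$ ensures that the family $\{B_{i}\}_{i=1}^{n}$ covers $\Omega_{1}$.

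Next I would produce, for each $i$, the candidate contraction $g_{i}:B_{i}\times [0,1]\ri \Omega_{2}$ by $g_{i}(x,t)=\pi(h_{i}(\psi(x),t))$. By construction $g_{i}(\cdot,0)=\pi\circ\psi|_{B_{i}}$ and $g_{i}(\cdot,1)\equiv \pi(p_{i})$, so $\pi\circ\psi|_{B_{i}}$ is null-homotopic in $\Omega_{2}$. However, what is needed for $cat_{\Omega_{2}}(\Omega_{1})$ is that the inclusion $j|_{B_{i}}$ (not $\pi\circ\psi|_{B_{i}}$) be null-homotopic in $\Omega_{2}$. This is exactly where the hypothesis enters: let $H:\Omega_{1}\times [0,1]\ri \Omega_{2}$ be a homotopy with $H(\cdot,0)=j$ and $H(\cdot,1)=\pi\circ\psi$. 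Concatenating $H|_{B_{i}\times [0,1]}$ with $g_{i}$ in the standard way (run $H$ on $[0,1/2]$ and $g_{i}$ on $[1/2,1]$ via the usual reparameterization) yields a continuous homotopy in $\Omega_{2}$ between $j|_{B_{i}}$ and the constant map at $\pi(p_{i})$, so $B_{i}$ is contractible in $\Omega_{2}$. Therefore $\{B_{i}\}_{i=1}^{n}$ is a closed cover of $\Omega_{1}$ by sets contractible in $\Omega_{2}$, which gives $cat_{\Omega_{2}}(\Omega_{1})\leq n=cat_{\Omega}(\Omega)$.

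The only delicate point — which I expect to be the main obstacle — is cleanly separating the two distinct notions of contractibility in play: the $A_{i}$ are contractible \emph{inside $\Omega$}, while what is required for the $B_{i}$ is contractibility \emph{inside $\Omega_{2}$}. The role of $\pi$ is to transport the $\Omega$-contraction of $A_{i}$ into $\Omega_{2}$, and the role of the homotopy equivalence hypothesis $\pi\circ\psi\simeq j$ is to swap $\pi\circ\psi|_{B_{i}}$ for the inclusion $j|_{B_{i}}$ at no cost. Once this bookkeeping is in order, the rest is a routine concatenation of homotopies plus the elementary observations that preimages of closed sets under continuous maps are closed and that compositions of continuous maps are continuous.
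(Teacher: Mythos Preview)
Your argument is correct and is precisely the standard proof of this abstract category lemma. Note, however, that the paper does not supply its own proof of this statement: it is quoted as a known tool with a reference to Cingolani--Lazzo \cite{CL}, so there is nothing to compare against beyond saying that your write-up matches the classical argument. The only cosmetic addition you might make is to dispose of the trivial case $cat_{\Omega}(\Omega)=\infty$ at the outset.
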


\noindent
Since $\mathcal{N}_{\e}$ is not a $C^{1}$ submanifold of $\X_{\e}$, we cannot directly apply Theorem \ref{LSt}. Fortunately, by Lemma \ref{SW2}, we know that the mapping $m_{\e}$ is a homeomorphism between $\mathcal{N}_{\e}$ and $\mathbb{S}_{\e}$, and $\mathbb{S}_{\e}$ is a $C^{1}$ submanifold of $\X_{\e}$. So we can apply Theorem \ref{LSt} to
$\Psi_{\e}(u)=\I_{\e}(\hat{m}_{\e}(u))|_{\mathbb{S}_{\e}}=\I_{\e}(m_{\e}(u))$, where $\Psi_{\e}$ is given in Lemma \ref{SW3}.
In the light of the above observations, we are ready to give the proof of the main result of this work.
\begin{proof}[Proof of Theorem \ref{thmAI}]
For any $\e>0$, we define $\alpha_{\e} : M \rightarrow \mathbb{S}_{\e}$ by setting $\alpha_{\e}(y)= m_{\e}^{-1}(\Phi_{\e}(y))$. By using Lemma \ref{lem4.1} and the definition of $\Psi_{\e}$, we can see that
\begin{equation*}
\lim_{\e \rightarrow 0} \Psi_{\e}(\alpha_{\e}(y)) = \lim_{\e \rightarrow 0} \I_{\e}(\Phi_{\e}(y))= d_{V_{0}} \quad \mbox{ uniformly in } y\in M.
\end{equation*}
Set $\tilde{\mathbb{S}}_{\e}=\{ w\in \mathbb{S}_{\e} : \Psi_{\e}(w) \leq d_{V_{0}} + h(\e)\}$, where $h(\e)= \sup_{y\in M} |\Psi_{\e}(\alpha_{\e}(y)) - d_{V_{0}}|\ri 0$ as $\e \ri 0$. Thus, $\alpha_{\e}(y)\in \tilde{\mathbb{S}}_{\e}$ for all $y\in M$, and this yields $\tilde{\mathbb{S}}_{\e}\neq \emptyset$ for all $\e>0$. 

Taking into account Lemma \ref{lem4.1}, Lemma \ref{SW2}, Lemma \ref{SW3}, and Lemma \ref{lem4.4}, we can find $\bar{\e}= \bar{\e}_{\delta}>0$ such that the following diagram
\begin{equation*}
M\stackrel{\Phi_{\e}}{\rightarrow} \widetilde{\mathcal{N}}_{\e} \stackrel{m_{\e}^{-1}}{\rightarrow} \tilde{\mathbb{S}}_{\e} \stackrel{m_{\e}}{\rightarrow} \widetilde{\mathcal{N}}_{\e} \stackrel{\beta_{\e}}{\rightarrow} M_{\delta}
\end{equation*}
is well defined for any $\e \in (0, \bar{\e})$.
By using Lemma \ref{lem4.2}, there exists a function $\theta(\e, y)$ with $|\theta(\e, y)|<\frac{\delta}{2}$ uniformly in $y\in M$, for all $\e \in (0, \bar{\e})$, such that $\beta_{\e}(\Phi_{\e}(y))= y+ \theta(\e, y)$ for all $y\in M$. We can see that $H(t, y)= y+ (1-t)\theta(\e, y)$, with $(t, y)\in [0,1]\times M$, is a homotopy between $\beta_{\e} \circ \Phi_{\e}=(\beta_{\e}\circ m_{\e}) \circ \alpha_{\e}$ and the inclusion map $id: M \rightarrow M_{\delta}$. This fact and Lemma \ref{lemma2.2CL} imply that $cat_{\tilde{\mathbb{S}}_{\e}} (\tilde{\mathbb{S}}_{\e})\geq cat_{M_{\delta}}(M)$.
On the other hand, let us choose a function $h(\e)>0$ such that $h(\e)\rightarrow 0$ as $\e\rightarrow 0$ and such that $d_{V_{0}}+h(\e)$ is not a critical level for $\I_{\e}$. For $\e>0$ small enough, we deduce from Proposition \ref{prop2.1} that $\I_{\e}$ satisfies the Palais-Smale condition in $\widetilde{\N}_{\e}$. So, by $(ii)$ of Lemma \ref{SW3}, we infer that $\Psi_{\e}$ satisfies the Palais-Smale condition in $\tilde{\mathbb{S}}_{\e}$. Hence, by using Theorem \ref{LSt}, we obtain that $\Psi_{\e}$ has at least $cat_{\tilde{\mathbb{S}}_{\e}}(\tilde{\mathbb{S}}_{\e})$ critical points on $\tilde{\mathbb{S}}_{\e}$. Then, in view of $(iii)$ of Lemma \ref{SW3}, we can infer that $\I_{\e}$ admits at least $cat_{M_{\delta}}(M)$ critical points.
\end{proof}

\section{Concentration of solutions to \eqref{P}}\label{Sect7}
Let us start with the following result which plays a fundamental role in the study of the behavior of maximum points of solutions to \eqref{P}.
\begin{lemma}\label{lemMoser}
Let $v_{n}$ be a weak solution of the problem
\begin{equation}\label{Pvn}\tag{$P_{V_{n}}$}
\left\{
\begin{array}{ll}
-\Delta_{p} v_{n} -\Delta_{q} v_{n} + V_{n}(x)(|v_{n}|^{p-2}v_{n} + |v_{n}|^{q-2}v_{n}) = f(v_{n}) &\mbox{ in } \R^{N} \\
v_{n}\in W^{1, p}(\R^{N}) \cap W^{1, q}(\R^{N}), \, v_{n}>0 &\mbox{ in }  \R^{N},
\end{array}
\right.
\end{equation}
where $V_{n}(x)\geq V_{0}$ and $v_{n}\rightarrow v$ in $W^{1, p}(\R^{N})\cap W^{1, q}(\R^{N})$ for some $v\not\equiv 0$. Then $v_{n}\in L^{\infty}(\R^{N})$ and there exists $C>0$ such that $|v_{n}|_{\infty}\leq C$ for all $n\in \mathbb{N}$. Moreover, \begin{align*}
\lim_{|x|\rightarrow \infty} v_{n}(x)=0 \mbox{ uniformly in } n\in \mathbb{N}.
\end{align*}
\end{lemma}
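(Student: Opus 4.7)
The plan is to split the argument into two stages: first, to establish a uniform $L^{\infty}$ bound for $\{v_{n}\}$ by a Moser-type iteration that exploits the subcritical growth of $f$ and the nonnegativity of the potential contribution; and second, to deduce the uniform decay at infinity from a localized Moser iteration on exterior domains, combined with the $L^{\q}$-tightness furnished by the strong convergence $v_{n}\ri v$ in $W^{1,p}(\R^{N})\cap W^{1,q}(\R^{N})$.

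For the $L^{\infty}$ estimate I would adapt the argument in \cite{HL}. Given $\beta>1$ and $L>0$, set $v_{n,L}=\min\{v_{n},L\}$ and use $\varphi_{n}=v_{n}v_{n,L}^{q(\beta-1)}$ as a test function in the weak formulation of \eqref{Pvn}. A direct computation shows that both the $p$- and $q$-Laplacian contributions on the left-hand side are nonnegative (since $t\mapsto t(\min\{t,L\})^{q(\beta-1)}$ is nondecreasing), and the potential term $\int_{\R^{N}}V_{n}(|v_{n}|^{p}+|v_{n}|^{q})\varphi_{n}\,dx$ is nonnegative as well, so all these terms can only help. Retaining only the leading $q$-gradient part, using the elementary inequality $|\nabla(v_{n}v_{n,L}^{\beta-1})|^{q}\leq C\beta^{q}|\nabla v_{n}|^{q}v_{n,L}^{q(\beta-1)}$, and applying the Sobolev embedding $W^{1,q}(\R^{N})\hookrightarrow L^{\q}(\R^{N})$, one obtains an inequality of the form
\begin{equation*}
\Big(\int_{\R^{N}}(v_{n}v_{n,L}^{\beta-1})^{\q}\,dx\Big)^{q/\q}\leq C\beta^{q}\int_{\R^{N}}f(v_{n})v_{n}v_{n,L}^{q(\beta-1)}\,dx.
\end{equation*}
By $(f_{2})$--$(f_{3})$ one has $|f(t)|\leq C(|t|^{q-1}+|t|^{r-1})$ with $r\in(q,\q)$; plugging this in, applying H\"older's inequality, and using the uniform bound $|v_{n}|_{\q}\leq K$ (inherited from $v_{n}\ri v$ in $W^{1,q}$), I would iterate along $\beta_{k+1}=(\q/q)\beta_{k}$, starting from $\beta_{0}=1$. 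Sending $L\ri\infty$ at each step and $k\ri\infty$ at the end would yield $|v_{n}|_{\infty}\leq C(1+|v_{n}|_{\q})^{\gamma}$ for some $\gamma>0$ with $C$ independent of $n$, hence the desired uniform $L^{\infty}$-bound.

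For the uniform decay, the strong convergence $v_{n}\ri v$ in $L^{\q}(\R^{N})$ and $v\in L^{\q}(\R^{N})$ imply that for every $\eta>0$ there exists $R_{\eta}>0$ such that $\int_{|x|>R_{\eta}}|v_{n}|^{\q}\,dx<\eta$ for all $n$. I would then repeat the Moser iteration with the modified test function $\psi_{R}^{q}v_{n}v_{n,L}^{q(\beta-1)}$, where $\psi_{R}\in C^{\infty}(\R^{N},[0,1])$ is a smooth cutoff vanishing in $B_{R}(0)$ and equal to $1$ outside $B_{R+1}(0)$. The commutator terms generated by $\nabla\psi_{R}$ are absorbed thanks to the uniform $W^{1,q}$-bound, and the conclusion of the iteration takes the form
\begin{equation*}
\|v_{n}\|_{L^{\infty}(\R^{N}\setminus B_{R+1}(0))}\leq C\Big(\int_{|x|>R}|v_{n}|^{\q}\,dx\Big)^{\kappa}
\end{equation*}
for some $\kappa>0$ and $C$ independent of $n$ and $R$. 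Letting $R\ri\infty$ gives uniform decay.

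The main obstacle is controlling the $(p,q)$-structure throughout the iteration: the non-homogeneity of $-\Delta_{p}-\Delta_{q}$ precludes a clean scaling, so one must carefully verify that the $p$-gradient and potential terms indeed carry the favorable sign at every step, and that the subcritical exponent $r<\q$ (rather than $\p$) is the one driving the bootstrap. Keeping all the constants uniform in $n$, in particular at the interface between the truncation $L<\infty$ and the limit $L\ri\infty$, will require a careful monotone convergence argument.
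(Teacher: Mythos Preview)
Your plan is essentially the paper's: a Moser iteration tailored to the $q$-Laplacian part, followed by a localized version on exterior domains, with the uniform decay coming from the $L^{\q}$-tightness that strong convergence $v_n\to v$ supplies. One correction is needed, however. The growth bound $|f(t)|\leq C(|t|^{q-1}+|t|^{r-1})$ does \emph{not} follow from $(f_2)$--$(f_3)$: near the origin $(f_2)$ only controls $|f(t)|/|t|^{p-1}$, and since $p<q$ the quotient $|f(t)|/|t|^{q-1}$ need not stay bounded. The correct estimate is $|f(t)|\leq \xi|t|^{p-1}+C_\xi|t|^{r-1}$ (the paper even takes $|t|^{\q-1}$ in the second term). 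For this reason you should \emph{retain} the potential term rather than drop it: keeping $\int_{\R^N}V_n v_n^{p-1}\varphi_n\,dx\geq V_0\int_{\R^N}v_n^{p}v_{n,L}^{q(\beta-1)}\,dx$ on the left and choosing $\xi<V_0$ absorbs the $|t|^{p-1}$ contribution from $f$, exactly as the paper does. With that fix the rest of your sketch (dropping the $p$-gradient term by sign, the Sobolev step, the bootstrap in $\beta$, and the cutoff argument for decay) matches the paper's proof.
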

\begin{proof}
We follow some ideas in \cite{AFans, HL} by developing a suitable Moser iteration argument \cite{Moser}.
For any $R>0$, $0<r\leq \frac{R}{2}$, let $\eta\in C^{\infty}(\R^{N})$ such that $0\leq \eta\leq 1$, $\eta=1$ in $\R^{N}\setminus B_{R}(0)$, $\eta=0$ in $\overline{B_{R-r}(0)}$  and $|\nabla \eta|\leq 2/r$. For each $n\in \mathbb{N}$ and for $L>0$,
let
\begin{equation*}
z_{L, n}=\eta^{q}v_{n} v_{L, n}^{q(\beta-1)} \quad \mbox{ and } \quad w_{L,n}=\eta v_{n} v_{L,n}^{\beta-1},
\end{equation*}
where  $v_{L,n}=\min\{v_{n}, L\}$ and $\beta>1$ to be determined later.
Choosing $z_{L, n}$ as a test function in \eqref{Pvn} we have 
\begin{align*}
\int_{\R^{N}}  |\nabla v_{n}|^{p-2} \nabla v_{n}\cdot \nabla z_{L,n} + |\nabla v_{n}|^{q-2} \nabla v_{n}\cdot \nabla z_{L,n} +V_{n} (v^{p-1}_{n}+v^{q-1}_{n}) z_{L,n} \, dx = \int_{\R^{N}} f(v_{n}) z_{L,n}\, dx.   
\end{align*}
By assumptions $(f_1)$ and $(f_2)$, for any $\xi>0$ there exists $C_{\xi}>0$ such that
\begin{equation*}
|f(t)|\leq \xi |t|^{p-1}+C_{\xi}|t|^{\q-1} \mbox{ for all } t\in \R.
\end{equation*}
Hence, using $(V_{1})$ and choosing $\xi\in (0, V_{0})$, we have
\begin{align*}
\int_{\R^{N}} \eta^{q} v_{L,n}^{q(\beta-1)} |\nabla v_{n}|^{q}\, dx \leq C_{\xi} \int_{\R^{N}} v_{n}^{\q} \eta^{q} v_{L,n}^{q(\beta-1)} \, dx -q\int_{\R^{N}} \eta^{q-1} v_{L,n}^{q(\beta-1)} v_{n} |\nabla v_{n}|^{q-2} \nabla v_{n}\cdot\nabla \eta \, dx. 
\end{align*}
For each $\tau>0$ we can use Young's inequality to obtain
\begin{align*}
\int_{\R^{N}} \eta^{q} v_{L,n}^{q(\beta-1)} |\nabla v_{n}|^{q}\, dx &\leq C_{\xi} \int_{\R^{N}} v_{n}^{\q} \eta^{q} v_{L,n}^{q(\beta-1)}\, dx + q\tau \int_{\R^{N}} |\nabla v_{n}|^{q} v_{L,n}^{q(\beta-1)} \eta^{q}\, dx \\
&\quad + q C_{\tau} \int_{\R^{N}} v_{n}^{q} |\nabla \eta|^{q} v_{L,n}^{q(\beta-1)}\, dx
\end{align*}
and taking $\tau>0$ sufficiently small, we get
\begin{align}\label{4.8HZ1}
\int_{\R^{N}} \eta^{q} v_{L,n}^{q(\beta-1)} |\nabla v_{n}|^{q}\, dx \leq C \int_{\R^{3}} v_{n}^{\q} \eta^{q} v_{L,n}^{q(\beta-1)}\, dx + C \int_{\R^{N}} |\nabla \eta|^{q} v_{n}^{q} v_{L,n}^{q(\beta-1)}\, dx. 
\end{align}
On the other hand, using the Sobolev inequality and the H\"older inequality, we can infer
\begin{align}\label{4.9HZ1}
|w_{L,n}|_{\q}^{q}&\leq C \int_{\R^{N}} |\nabla w_{L,n}|^{q}\, dx = C \int_{\R^{N}} |\nabla (\eta v_{L,n}^{\beta-1} v_{n})|^{q} \, dx\nonumber \\
&\leq C \beta^{q} \left( \int_{\R^{N}} |\nabla \eta|^{q} v_{n}^{q} v_{L,n}^{q(\beta-1)} \, dx + \int_{\R^{N}} \eta^{q} v_{L,n}^{q(\beta-1)} |\nabla v_{n}|^{q}\, dx \right). 
\end{align}
Combining \eqref{4.8HZ1} and \eqref{4.9HZ1}, we find
\begin{align}\label{5.7AFANS}
|w_{L,n}|_{\q}^{q}\leq C \beta^{q} \left( \int_{\R^{N}} |\nabla \eta|^{q} v_{n}^{q} v_{L,n}^{q(\beta-1)} \, dx + \int_{\R^{N}} v_{n}^{\q} \eta^{q} v_{L,n}^{q(\beta-1)} \, dx \right). 
\end{align}
We claim that $v_{n}\in L^{\frac{(\q)^{2}}{q}}(|x|\geq R)$ for $R$ large enough and uniformly in $n$. Let $\beta=\frac{\q}{q}$. From \eqref{5.7AFANS} we have 
\begin{align*}
|w_{L,n}|_{\q}^{q}\leq C \beta^{q} \left( \int_{\R^{N}} |\nabla \eta|^{q} v_{n}^{q} v_{L,n}^{\q-q} \, dx + \int_{\R^{N}} v_{n}^{\q} \eta^{q} v_{L,n}^{\q-q} \, dx \right)
\end{align*}
or equivalently
\begin{align*}
|w_{L,n}|_{\q}^{q}\leq C \beta^{q} \left( \int_{\R^{N}} |\nabla \eta|^{q} v_{n}^{q} v_{L,n}^{\q-q} \, dx + \int_{\R^{N}} v_{n}^{q} \eta^{q} v_{L,n}^{\q-q}v^{\q-q}_{n} \, dx \right).
\end{align*}
Using the H\"older inequality with exponents $\frac{\q}{q}$ and $\frac{\q}{\q-q}$, we obtain
\begin{align*}
|w_{L,n}|_{\q}^{q}\leq C \beta^{q} \left( \int_{\R^{N}} |\nabla \eta|^{q} v_{n}^{q} v_{L,n}^{\q-q} \, dx\right)+C\beta^{q} \left(\int_{\R^{N}} (v_{n}\eta v^{\frac{\q-q}{q}}_{L, n})^{\q} dx\right)^{\frac{q}{\q}}  \left(\int_{|x|\geq \frac{R}{2}} v^{\q}_{n}\, dx\right)^{\frac{\q-q}{\q}}.
\end{align*}
From the definition of $w_{L, n}$, we have
\begin{align*}
\left(\int_{\R^{N}} (v_{n}\eta v^{\frac{\q-q}{q}}_{L, n})^{\q} dx\right)^{\frac{q}{\q}} \leq C \beta^{q} \left( \int_{\R^{N}} |\nabla \eta|^{q} v_{n}^{q} v_{L,n}^{\q-q} \, dx\right)+C\beta^{q} \left(\int_{\R^{N}} (v_{n}\eta v^{\frac{\q-q}{q}}_{L, n})^{\q} dx\right)^{\frac{q}{\q}}  \left(\int_{|x|\geq \frac{R}{2}} v^{\q}_{n}\, dx\right)^{\frac{\q-q}{\q}}.
\end{align*}
Since $v_{n}\ri v$ in $W^{1, p}(\R^{N})\cap W^{1, q}(\R^{N})$, for $R>0$ sufficiently large, we get
\begin{align*}
\int_{|x|\geq \frac{R}{2}} v^{\q}_{n}\, dx\leq \epsilon \quad \mbox{ uniformly in } n\in \mathbb{N}.
\end{align*}
Hence,
\begin{align*}
\left(\int_{|x|\geq R} (v_{n}\eta v^{\frac{\q-q}{q}}_{L, n})^{\q} dx\right)^{\frac{q}{\q}} \leq C\beta^{q} \int_{\R^{N}} v^{q}_{n}v^{\q-q}_{L, n}\, dx\leq C\beta^{q} \int_{\R^{N}} v^{q}_{n}\, dx\leq K<\infty.
\end{align*}
Using Fatou's lemma, as $L\ri \infty$, we deduce that
$$
\int_{|x|\geq R} v^{\frac{(\q)^{2}}{q}}_{n}\, dx<\infty
$$
and therefore the assertion holds. Next, choosing $\beta=\q \frac{t-1}{qt}$ with $t=\frac{(\q)^{2}}{q(\q-q)}$, we have $\beta>1$, $\frac{qt}{t-1}<\q$ and $v_{n}\in L^{\frac{\beta q t}{t-1}}(|x|\geq R-r)$. From \eqref{5.7AFANS} we find
\begin{align*}
|w_{L,n}|_{\q}^{q}\leq C \beta^{q} \left( \int_{R\geq |x|\geq R-r} v_{n}^{q} v_{L,n}^{q(\beta-1)} \, dx + \int_{|x|\geq R-r} v_{n}^{\q}  v_{L,n}^{q(\beta-1)} \, dx \right) 
\end{align*}
or equivalently
\begin{align*}
|w_{L,n}|_{\q}^{q}\leq C \beta^{q} \left( \int_{R\geq |x|\geq R-r} v_{n}^{q\beta} \, dx + \int_{|x|\geq R-r} v_{n}^{\q-q}  v_{n}^{q\beta} \, dx \right). 
\end{align*}
Using the H\"older inequality with exponents $\frac{t}{t-1}$ and $t$,  we get
\begin{align*}
|w_{L,n}|_{\q}^{q}\leq C \beta^{q} \left\{\left[ \int_{R\geq |x|\geq R-r} v_{n}^{\frac{q\beta t}{t-1}} \, dx\right]^{\frac{t-1}{t}} \left[ \int_{R\geq |x|\geq R-r}dx \right]^{\frac{1}{t}} + \left[\int_{|x|\geq R-r} v_{n}^{(\q-q)t} \, dx \right]^{\frac{1}{t}}  \left[\int_{|x|\geq R-r} v_{n}^{\frac{q\beta t}{t-1}} \, dx \right]^{\frac{t-1}{t}} \right\}. 
\end{align*}
Since $(\q-q)t=(\q)^{2}$, we deduce that 
\begin{align*}
|w_{L,n}|_{\q}^{q}\leq C \beta^{q} \left( \int_{R\geq |x|\geq R-r} v_{n}^{\frac{q\beta t}{t-1}} \, dx\right)^{\frac{t-1}{t}}. 
\end{align*}
Note that
\begin{align*}
|v_{L,n}|_{L^{\q\beta}(|x|\geq R)}^{q\beta}&\leq \left(\int_{|x|\geq R-r} v^{\q \beta}_{L, n} \, dx\right)^{\frac{q}{\q}} \\
&\leq \left(\int_{\R^{N}} \eta^{q} v^{\q}_{n} v^{\q(\beta-1)}_{L, n} \, dx\right)^{\frac{q}{\q}}=|w_{L,n}|_{\q}^{q} \\
&\leq C \beta^{q} \left( \int_{R\geq |x|\geq R-r} v_{n}^{\frac{q\beta t}{t-1}} \, dx\right)^{\frac{t-1}{t}} \\
&=C\beta^{q} |v_{n}|^{\beta q}_{L^{\frac{q\beta t}{t-1}}(|x|\geq R-r)}
\end{align*}
which combined with Fatou's lemma with respect  to $L$ gives
\begin{align*}
|v_{n}|_{L^{\q\beta}(|x|\geq R)}^{q\beta}\leq C\beta^{q} |v_{n}|^{\beta q}_{L^{\frac{q\beta t}{t-1}}(|x|\geq R-r)}.
\end{align*}
Taking $\chi=\frac{\q(t-1)}{q t}$ and $s=\frac{q t}{t-1}$, it follows from the above inequality that 
\begin{align*}
|v_{n}|_{L^{\chi^{m+1}s}(|x|\geq R)}^{q\beta}\leq C^{\sum_{i=1}^{m} \chi^{-i}} \chi^{\sum_{i=1}^{m} i \chi^{-i}} |v_{n}|_{L^{\q}(|x|\geq R-r)}
\end{align*}
which implies that $|v_{n}|_{L^{\infty}(|x|\geq R)}\leq C |v_{n}|_{L^{\q}(|x|\geq R-r)}$. Since $v_{n}\ri v$ in $W^{1, q}(\R^{N})$, for all $\epsilon>0$ there exists $R>0$ such that 
$$
|v_{n}|_{L^{\infty}(|x|\geq R)}<\epsilon \quad \mbox{ for all } n\in \mathbb{N}.
$$
This completes the proof of the lemma.
\end{proof}

\begin{lemma}\label{UBlemAF}
There exists $\delta>0$ such that $|v_{n}|_{\infty}\geq \delta$ for all $n\in \mathbb{N}$.
\end{lemma}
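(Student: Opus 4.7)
The plan is to argue by contradiction: suppose there is a subsequence (still denoted $\{v_{n}\}$) with $|v_{n}|_{\infty}\to 0$ as $n\to\infty$, and derive that $v\equiv 0$, contradicting the hypothesis $v\not\equiv 0$.

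The key observation is that $(f_{2})$ gives, for any $\xi>0$, a threshold $\delta_{0}=\delta_{0}(\xi)>0$ with $|f(t)|\leq \xi |t|^{p-1}$ whenever $|t|\leq \delta_{0}$. Under the contradiction hypothesis, once $n$ is large enough we have $|v_{n}(x)|\leq \delta_{0}$ for every $x\in\R^{N}$, so the pointwise bound $f(v_{n})v_{n}\leq \xi v_{n}^{p}$ holds on all of $\R^{N}$ (no splitting into small/large parts is needed, which is precisely where the previous lemma, giving an $L^{\infty}$ bound and uniform decay at infinity, is not enough on its own).

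Now I would test \eqref{Pvn} with $v_{n}$ itself (which is admissible since $v_{n}\in W^{1,p}\cap W^{1,q}$) to obtain
\begin{align*}
|\nabla v_{n}|_{p}^{p}+|\nabla v_{n}|_{q}^{q}+\int_{\R^{N}} V_{n}(x)\bigl(|v_{n}|^{p}+|v_{n}|^{q}\bigr)\,dx=\int_{\R^{N}} f(v_{n})v_{n}\,dx.
\end{align*}
Using $V_{n}(x)\geq V_{0}$ on the left and the pointwise bound above on the right, this yields
\begin{align*}
V_{0}\,|v_{n}|_{p}^{p}+V_{0}\,|v_{n}|_{q}^{q}\leq \xi\,|v_{n}|_{p}^{p}.
\end{align*}
Choosing $\xi\in (0,V_{0})$ from the start forces $(V_{0}-\xi)|v_{n}|_{p}^{p}+V_{0}|v_{n}|_{q}^{q}\leq 0$, hence $|v_{n}|_{p}\to 0$.

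The final step is to combine this with the strong convergence $v_{n}\to v$ in $W^{1,p}(\R^{N})\cap W^{1,q}(\R^{N})$, which in particular gives $|v_{n}|_{p}\to|v|_{p}$; thus $|v|_{p}=0$, forcing $v\equiv 0$ and contradicting the hypothesis $v\not\equiv 0$. The whole argument is short, and the only delicate point is making sure the contradiction assumption $|v_{n}|_{\infty}\to 0$ is applied pointwise uniformly in $x$ so that $(f_{2})$ can be used globally; this is where having already proved $v_{n}\in L^{\infty}(\R^{N})$ in Lemma \ref{lemMoser} is used implicitly (to make the quantity $|v_{n}|_{\infty}$ meaningful and to justify the test-function computation).
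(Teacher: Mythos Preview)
Your proof is correct and follows essentially the same route as the paper: argue by contradiction, test \eqref{Pvn} with $v_{n}$, use $(f_{2})$ together with the smallness of $|v_{n}|_{\infty}$ to bound $\int f(v_{n})v_{n}$ by a small multiple of $|v_{n}|_{p}^{p}$, and reach a contradiction. The paper additionally invokes $(f_{5})$ to phrase the bound via $f(|v_{n}|_{\infty})/|v_{n}|_{\infty}^{p-1}$, and concludes the contradiction directly from the inequality (since $v_{n}>0$), whereas you pass to the limit and contradict $v\not\equiv 0$; both endings are fine, and your use of $(f_{2})$ alone is slightly more economical.
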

\begin{proof}
Assume to the contrary that $|v_{n}|_{\infty}\rightarrow 0$ as $n\rightarrow \infty$. By $(f_2)$, there exists $n_{0}\in \mathbb{N}$ such that $\frac{f(|v_{n}|_{\infty}}{|v_{n}|_{\infty}^{p-1}}<\frac{V_{0}}{2}$ for all $n\geq n_{0}$.
Therefore, in view of $(f_{5})$, we can see that
\begin{align*}
|\nabla v_{n}|_{p}^{p} + |\nabla v_{n}|_{q}^{q}+V_{0}(|v_{n}|_{p}^{p}+ |v_{n}|_{q}^{q}) \leq \int_{\R^{N}} \frac{f(|v_{n}|_{\infty})}{|v_{n}|^{p-1}_{\infty}} |v_{n}|^{p} dx \leq \frac{V_{0}}{2} |v_{n}|^{p}_{p},
\end{align*}
which leads to a contradiction.
\end{proof}

\begin{proof}[End of the proof of Theorem \ref{thmAI}]
Let $u_{\e_{n}}$ be a solution to $(P_{\e_{n}})$.
Then $v_{n}(x)=u_{\e_{n}}(x+\tilde{y}_{n})$ is a solution to \eqref{Pvn} with $V_{n}(x)=V(\e_{n}x+\e_{n}\tilde{y}_{n})$, where $\{\tilde{y}_{n}\}$ is given by Proposition \ref{prop4.1}. Moreover,  in view of Proposition \ref{prop4.1}, up to subsequence, $v_{n}\rightarrow v\neq 0$ in $\Y_{V_{0}}$ and $y_{n}=\e_{n}\tilde{y}_{n}\rightarrow y\in M$. 
If $p_{n}$ denotes a global maximum point of $v_{n}$, we can use Lemma \ref{lemMoser} and Lemma \ref{UBlemAF} to see that $p_{n}\in B_{R}(0)$ for some $R>0$. Consequently, $z_{\e_{n}}=p_{n}+\tilde{y}_{n}$ is a global maximum point of $u_{\e_{n}}$, and then $\e_{n}z_{\e_{n}}=\e_{n}p_{n}+\e_{n}\tilde{y}_{n}\rightarrow y$ because $\{p_{n}\}$ is bounded. This fact and the continuity of $V$ yield $V(\e_{n}z_{\e_{n}})\rightarrow V(y)=V_{0}$ as $n\rightarrow \infty$.

Finally, we prove the exponential decay of $u_{\e_n}$. 
We use some arguments from \cite{HL}.
Since $v_{n}(x)\ri 0$ as $|x|\ri \infty$ uniformly in $n\in \mathbb{N}$, and using $(f_1)$, we can find $R>0$ such that
\begin{align*}
f(v_{n}(x))\leq \frac{V_{0}}{2}(v_{n}^{p-1}(x)+v_{n}^{q-1}(x)) \quad \mbox{ for all } |x|\geq R.
\end{align*}
Then, by using $(V_{1})$, we obtain
 \begin{align}\begin{split}\label{LIUGUOZAMP2+}
-\Delta_{p} v_{n}-\Delta_{q}v_{n}+\frac{V_{0}}{2}(v_{n}^{p-1}+v_{n}^{q-1}) &= f(v_{n})-\left(V_{n}-\frac{V_{0}}{2}\right)(v_{n}^{p-1}+v_{n}^{q-1}) \\
&\leq f(v_{n})-\frac{V_{0}}{2}(v_{n}^{p-1}+v_{n}^{q-1}) \leq 0 \quad \mbox{ for } |x|\geq R.
\end{split}\end{align}
Let $\phi(x)=M e^{-c|x|}$ with $c, M>0$ such that $c^{p}(p-1)< \frac{V_{0}}{2}$, $c^{q}(q-1)< \frac{V_{0}}{2}$ and $Me^{-cR}\geq v_{n}(x)$ for all $|x|=R$.
We can see that 
\begin{align}\label{LIUGUOZAMP1+}
&-\Delta_{p} \phi-\Delta_{q}\phi+\frac{V_{0}}{2}(\phi^{p-1}+\phi^{q-1}) \nonumber\\
&=\phi^{p-1}\left(\frac{V_{0}}{2}-c^{p}(p-1)+\frac{N-1}{|x|}c^{p-1} \right)+\phi^{q-1}\left(\frac{V_{0}}{2}-c^{q}(q-1)+\frac{N-1}{|x|}c^{q-1} \right)>0 \quad \mbox{ for } |x|\geq R.
\end{align}
Using $\eta=(v_{n}-\phi)^{+}\in W^{1, q}_{0}(\R^{N}\setminus B_{R})$ as a test function in \eqref{LIUGUOZAMP2+} and \eqref{LIUGUOZAMP1+}, we find 
\begin{align*}
0\geq & \int_{\{|x|\geq R\}\cap \{v_{n}>\phi \}} \left[ (|\nabla v_{n}|^{p-2}\nabla v_{n}- |\nabla \phi|^{p-2}\nabla \phi)\cdot\nabla \eta+(|\nabla v_{n}|^{q-2}\nabla v_{n} -|\nabla \phi|^{q-2}\nabla \phi) \cdot\nabla \eta\right] \\
&+\frac{V_{0}}{2} \left[(v_{n}^{p-1}-\phi^{p-1})+(v_{n}^{q-1}-\phi^{q-1})\right]\eta\, dx.
\end{align*}
Since for $t>1$ the following holds (see formula $(2.10)$ in \cite{Simon})
\begin{align*}
(|x|^{t-2}x-|y|^{t-2}y)\cdot (x-y)\geq 0  \quad\mbox{ for all } x, y\in \R^{N},
\end{align*}
and $U, v_{n}$ are continuous in $\R^{N}$, we deduce that $v_{n}(x)\leq \phi(x)$ for all $|x|\geq R$.
Recalling that $u_{\e_n}(x)=v_{n}(x-\tilde{y}_{n})$ and $\{p_{n}\}$ is bounded, we conclude that $u_{\e_n}(x)\leq C_{1}e^{-C_{2}|x-z_{\e_{n}}|}$ for all $x\in \R^{N}$. This completes the proof of Theorem \ref{thmAI}.
\end{proof}

\section*{Acknowledgments.}
The authors are very grateful to the anonymous referee for his/her careful reading of the manuscript and valuable suggestions that improved the presentation of the paper. 
The first author was partly supported by the GNAMPA Project 2020 entitled: {\it Studio Di Problemi Frazionari Nonlocali Tramite Tecniche Variazionali}.
The second author was partly supported by Slovenian research agency grants P1-0292, N1-0114, N1-0083, N1-0064 and J1-8131.

\renewcommand{\refname}{REFERENCES}

\end{document}